\renewcommand{\theequation}{\arabic{section}.\arabic{equation}}
\numberwithin{equation}{section}
\newtheorem{remark}{Remark}[section]
\newcommand{\be}{\begin{equation}}
\newcommand{\ee}{\end{equation}}
\title{A symmetrized parametric finite element method for
 anisotropic surface diffusion in 3D}
\author{Weizhu Bao\thanks{Department of Mathematics, National University of Singapore, Singapore, 119076 ({\it matbaowz@nus.edu.sg}). This author's research was supported by the Ministry of Education of Singapore grant MOE2019-T2-1-063 (R-146-000-296-112).}
\and Yifei Li\thanks{Department of Mathematics, National University of
Singapore, Singapore, 119076 ({\it e0444158@u.nus.edu}).}
}
\date{}
\begin{document}
\maketitle


\begin{abstract}
For the evolution of a closed surface under anisotropic surface diffusion  with a general anisotropic surface energy $\gamma(\boldsymbol{n})$ in three dimensions (3D),
where $\boldsymbol{n}$ is the  unit outward normal vector,
by introducing a novel symmetric positive definite surface energy matrix $\boldsymbol{Z}_k(\boldsymbol{n})$ depending on a stabilizing function $k(\boldsymbol{n})$ and the Cahn-Hoffman $\boldsymbol{\xi}$-vector,
we present a new symmetrized variational formulation for anisotropic surface diffusion with weakly or strongly anisotropic surface energy, which preserves two important structures including volume conservation and energy dissipation. Then we propose a structural-preserving parametric finite element method (SP-PFEM) to discretize the symmetrized variational problem, which preserves the volume in the discretized level. Under a relatively mild and simple condition on $\gamma(\boldsymbol{n})$,
we show that SP-PFEM  is unconditionally energy-stable for almost all anisotropic surface energies $\gamma(\boldsymbol{n})$ arising in practical applications. Extensive numerical results are reported to demonstrate
the efficiency and accuracy as well as energy dissipation of the proposed
SP-PFEM for solving anisotropic surface diffusion in 3D.
\end{abstract}


\begin{keywords} Anisotropic surface diffusion,
Cahn-Hoffman $\boldsymbol{\xi}$-vector, anisotropic surface energy,
parametric finite element method, structure-preserving, energy-stable,
surface energy matrix
\end{keywords}

\begin{AMS}
65M60, 65M12, 35K55, 53C44
\end{AMS}

\pagestyle{myheadings} \markboth{W.~Bao, and Y.~Li}
{Symmetrized PFEM for anisotropic surface diffusion in 3D}

\section{Introduction}

In materials science and solid-state physics as well as many other
applications, surface energy is usually anisotropic due to lattice
orientational anisotropy  at material interfaces and/or surfaces \cite{taylor1992overview,giga2006surface}.
The anisotropic surface energy generates {\bf anisotropic surface diffusion}
-- an important and general process involving the motion of adatoms,
molecules and atomic clusters (adparticles) -- at materials surfaces and
interfaces in solids \cite{Cahn94}. The anisotropic surface diffusion is an
important kinetics and/or mechanism in surface phase formation
\cite{bekhtereva1988indium,chang1999thermodynamic}, epitaxial growth \cite{Fonseca14,gurtin2002interface}, heterogeneous catalysis \cite{hauffe1955application}, and other areas in
materials/surface science \cite{Thompson12}. It has significant and manifested
applications in solid-state physics and materials science as well as
computational geometry, such as solid-state dewetting \cite{jiang2012,Naffouti17,wang2015sharp,Thompson12,Ye10a}, crystal growth of nanomaterials \cite{barrett2012numerical}, evolution of voids in microelectronic circuits
\cite{li1999numerical,Suo97}, morphology development of alloys \cite{asaro1972interface}, quantum dots
manufacturing \cite{armelao2006recent}, deformation of images \cite{clarenz2000anisotropic}, etc.

As is shown in Figure \ref{fig: illustartion figure}, for a closed surface
$S:=S(t)$ in three dimensions (3D) associated with a given anisotropic
surface energy $\gamma(\boldsymbol{n})$, where $t\ge0$ is time and
$\boldsymbol{n}=(n_1,n_2,n_3)^T\in
{\mathbb S}^2$ represents the outward unit normal vector satisfying
$|\boldsymbol{n}|:=\sqrt{n_1^2+n_2^2+n_3^2}=1$,
the motion by anisotropic surface diffusion of the surface is described by the following
geometric flow
\cite{Mullins57,jiang2012,Naffouti17,wang2015sharp,Thompson12,Ye10a}:
\begin{equation}\label{eq: geo form}
	V_n=\Delta_S\, \mu,
\end{equation}
where $V_n$ denotes the normal velocity, $\Delta_S:= \nabla_S \cdot \nabla_S$ is the surface Laplace-Beltrami operator, $\nabla_S$ denotes the surface gradient with respect to the surface $S(t)$, and $\mu:=\mu(\boldsymbol{n})$ is the chemical potential (or weighted mean curvature denoted as $H_\gamma:=H_\gamma(\boldsymbol{n})$ in the literature) generated from
the surface energy functional $W(S):=W(S(t))=\int_{S(t)} \gamma(\boldsymbol{n})\, dS$ via
the thermodynamic variation as $\mu=\frac{\delta W(S)}{\delta S}=\lim_{\varepsilon\to0}\frac{W(S^\varepsilon)-W(S)}{\varepsilon}$
with $S^\varepsilon$ being a small perturbation of $S$ \cite{jiang2016solid,jiang2019sharp}. It is well-known that the evolution
of the surface $S(t)$ under the anisotropic surface diffusion
\eqref{eq: geo form} preserves the following two essential geometric
structures \cite{Cahn94}: (1) the volume of the region enclosed by the surface is conserved, and (2) the free surface energy (or weighted surface area)
$W(S)$ decreases in time. In fact, the motion governed by the anisotropic surface diffusion can be mathematically regarded as the $H^{-1}$-gradient flow of the free surface energy functional (or weighted surface area) $W(S)$
\cite{taylor1992ii}.
\begin{figure}[hbtp!]
\centering
\includegraphics[width=0.6\textwidth]{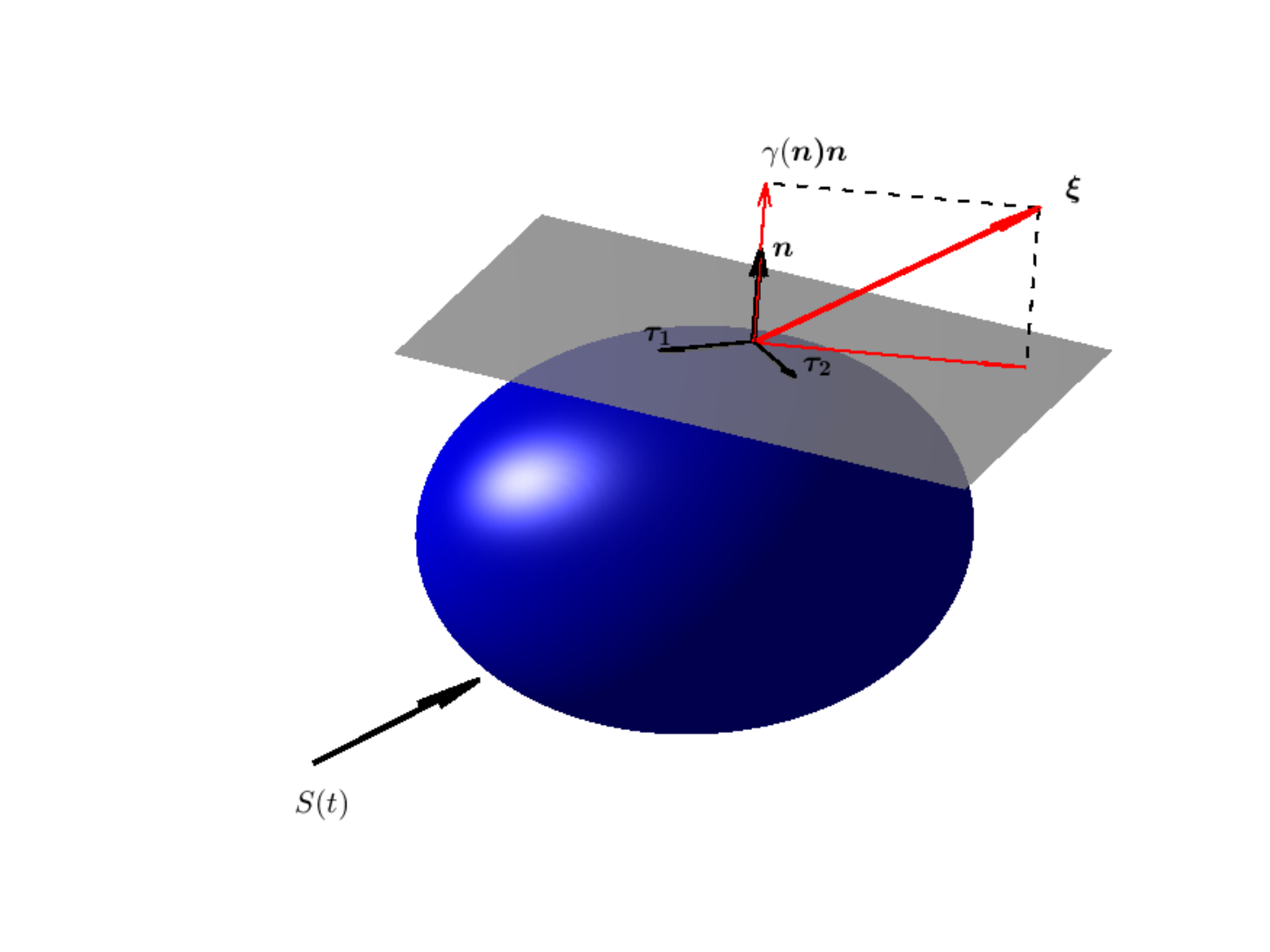}
\caption{An illustration of a closed surface $S(t)$ in $\mathbb{R}^3$
under anisotropic surface diffusion with an anisotropic surface energy $\gamma(\boldsymbol{n})$, where $\boldsymbol{n}$ is the outward unit normal vector, $\boldsymbol{\xi}$ is the Cahn-Hoffman $\boldsymbol{\xi}$-vector in \eqref{The xi-vector}, and $\boldsymbol{\tau}_1$ and $\boldsymbol{\tau}_2$ form a basis of the local tangential space.}
\label{fig: illustartion figure}
\end{figure}

Define $\gamma(\boldsymbol{p}):\ \mathbb{R}^3_*:=\mathbb{R}^3\setminus \{\boldsymbol{0}\}\to {\mathbb R}^+$ be a homogeneous extension of the anisotropic surface energy
$\gamma(\boldsymbol{n}):\ \mathbb{S}^2\to {\mathbb R}^+$ satisfying:
(i) $\gamma(c\boldsymbol{p})=c\gamma(\boldsymbol{p})$ for $c>0$ and $\boldsymbol{p}\in \mathbb{R}^3_*$, and  (ii) $\gamma(\boldsymbol{p})|_{\boldsymbol{p}=\boldsymbol{n}}
=\gamma(\boldsymbol{n})$ for $\boldsymbol{n}\in \mathbb{S}^2$. A typical homogeneous extension is widely used in the literature as \cite{jiang2019sharp,deckelnick2005computation}
\begin{equation}\label{gamma p}
\gamma(\boldsymbol{p}):=|\boldsymbol{p}|\gamma\left(\frac{\boldsymbol{p}}
{|\boldsymbol{p}|}\right),\qquad \forall \boldsymbol{p}=(p_1,p_2,p_3)^T\in \mathbb{R}^3_*:=\mathbb{R}^3\setminus \{\boldsymbol{0}\},
\end{equation}
where $|\boldsymbol{p}|=\sqrt{p_1^2+p_2^2+p_3^2}$.
Then the Cahn-Hoffman $\boldsymbol{\xi}$-vector introduced by Cahn and Hoffman and the Hessian matrix ${\bf H}_{\gamma}(\boldsymbol{n})$ of $\gamma(\boldsymbol{p})$ are mathematically given by \cite{hoffman1972vector,wheeler1999cahn}
\begin{equation}\label{The xi-vector}
\boldsymbol{\xi}=(\xi_1,\xi_2,\xi_3)^T:=\boldsymbol{\xi}(\boldsymbol{n})=\nabla \gamma(\boldsymbol{p})\big|_{\boldsymbol{p}}, \quad {\bf H}_{\gamma}(\boldsymbol{n}):=
\nabla\nabla\gamma(\boldsymbol{p})\big|_{\boldsymbol{p}=\boldsymbol{n}}, \quad \forall\boldsymbol{n}\in \mathbb{S}^2.
\end{equation}
Then the chemical potential $\mu$ (or weighted mean curvature) can be obtained as \cite{cahn1974vector}
\begin{equation}\label{mu-xi-relation}
\mu=\mu(\boldsymbol{n})=H_\gamma=H_\gamma(\boldsymbol{n})=\nabla_S \cdot\boldsymbol{\xi}=\nabla_S \cdot\boldsymbol{\xi}(\boldsymbol{n}), \qquad \forall\boldsymbol{n}\in \mathbb{S}^2.
\end{equation}
For any $\boldsymbol{n}\in \mathbb{S}^2$, we notice that ${\bf H}_{\gamma}(\boldsymbol{n})\boldsymbol{n}=\boldsymbol{0}$
and thus $0$ is an eigenvalue of ${\bf H}_{\gamma}(\boldsymbol{n})$ and $\boldsymbol{n}$ is a corresponding eigenvector. We denote the other two eigenvalues of ${\bf H}_{\gamma}(\boldsymbol{n})$ as $\lambda_1({\boldsymbol{n}})\le \lambda_2({\boldsymbol{n}}) \in{\mathbb R}$.
When $\gamma(\boldsymbol{n})\equiv {\rm constant}$ (e.g. $\gamma(\boldsymbol{n})\equiv 1$) for $\boldsymbol{n}\in \mathbb{S}^2$, i.e. with isotropic surface energy, then we have
$\gamma(\boldsymbol{p})=|\boldsymbol{p}|$ in \eqref{gamma p},
$\boldsymbol{\xi}=\boldsymbol{n}$ in \eqref{The xi-vector}, and
$\mu=H$ and ${\bf H}_{\gamma}(\boldsymbol{n})\equiv I_3-\boldsymbol{n}\boldsymbol{n}^T$ in \eqref{The xi-vector} with $H$ the mean curvature and $I_3$ the $3\times 3$ identity matrix and $\lambda_1(\boldsymbol{n})=\lambda_2(\boldsymbol{n})\equiv 1$,
and thus the anisotropic surface diffusion
\eqref{eq: geo form} collapses to the (isotropic) surface diffusion
with normal velocity given as $V_n=\Delta_S\, H$ \cite{barrett2007parametric,Mullins57}.
In contrast, when $\gamma(\boldsymbol{n})$ is not a constant, i.e.  with anisotropic surface energy:
when $\boldsymbol{\tau}^T{\bf H}_{\gamma}(\boldsymbol{n})\boldsymbol{\tau}>0$ for all $\boldsymbol{n},\boldsymbol{\tau}\in\mathbb{S}^2$ satisfying $\boldsymbol{\tau}\cdot \boldsymbol{n}:=\boldsymbol{\tau}^T \boldsymbol{n}=0$ ($\Leftrightarrow \lambda_2({\boldsymbol{n}})\ge\lambda_1({\boldsymbol{n}}) \geq0$ for all $\boldsymbol{n}\in\mathbb{S}^2$), it is called as {\sl weakly anisotropic}; and when $\boldsymbol{\tau}^T{\bf H}_{\gamma}(\boldsymbol{n})\boldsymbol{\tau}$ changes sign for $\boldsymbol{n},\boldsymbol{\tau}\in\mathbb{S}^2$ satisfying $\boldsymbol{\tau}\cdot \boldsymbol{n}=0$ ($\Leftrightarrow \lambda_1({\boldsymbol{n}})< 0$ for  some $\boldsymbol{n}\in\mathbb{S}^2$), it is called as {\sl strongly anisotropic}. For convenience of readers, we list several commonly-used anisotropic surface energies $\gamma(\boldsymbol{n})$ in the literature and their
corresponding Cahn-Hoffman $\boldsymbol{\xi}$-vectors in Appendix A.

Different numerical methods have been presented for solving the isotropic/anisotropic surface diffusion \eqref{eq: geo form}, such as the finite element method via graph evolution \cite{bansch2004surface,deckelnick2005computation}, the marker-particle method \cite{du2010tangent}, the discontinuous Galerkin finite element method \cite{xu2009local}, and the parametric finite element method (PFEM)
\cite{barrett2007parametric,barrett2008variational,Hausser07,bao2017parametric,jiang2019sharp,li2020energy}.
Among these methods, for isotropic surface diffusion,
the energy-stable PFEM (ES-PFEM) based on an elegant variational formulation, which was proposed by
Barrett, Garcke, and N{\"u}rnberg \cite{barrett2007parametric,barrett2008parametric}  (denoted as BGN scheme), performs the best in terms of efficiency and accuracy as well as mesh quality in practical computations, especially in two dimensions (2D).
The BGN scheme with unconditionally energy stability was successfully extended for solving solid-state dewetting problems with isotropic surface energy, i.e. motion of open curve and surface in 2D and 3D, respectively \cite{barrett2007parametric,barrett2008parametric}. It was also successfully extended for solving anisotropic surface diffusion Riemannian metric anisotropic surface energy \cite{barrett2008numerical,barrett2008variational}. Recently, based on BGN's variational formulation for surface diffusion, by approximating the normal vector in a clever way, Bao and Zhao \cite{bao2021structure,bao2022volume}
presented a structure-preserving PFEM (SP-PFEM) for surface diffusion,
which preserves area/volume conservation in 2D/3D and unconditionally energy dissiption in the discretized level.
Very recently, by introducing a proper surface energy matrix depending
on the Cahn-Hoffman $\boldsymbol{\xi}$-vector and a stabilizing function,
we obtained a new symmetrized (and conservative) variational formulation for anisotropic surface diffusion
with arbitrary surface energy in 2D and then designed structure-preserving and energy-stable PFEM under mild and simple conditions on the surface energy \cite{bao2021symmetrized}. The main aim of this paper is to extend the above method from 2D to 3D for anisotropic surface diffusion
with arbitrary surface energy. Again, the key is based on introducing a proper surface energy matrix depending
on the Cahn-Hoffman $\boldsymbol{\xi}$-vector and a stabilizing function in 3D and obtaining a new symmetrized (and conservative) variational formulation. The difficulty and major part is to establish unconditionally
energy dissipation of the full discretization under the following simple and mild condition on the arbitrary surface energy $\gamma(\boldsymbol{n})$ as
\begin{equation}\label{engstabgmp}
\gamma(-\boldsymbol{n})=\gamma(\boldsymbol{n}), \quad \forall \boldsymbol{n}\in \mathbb{S}^2, \qquad \gamma(\boldsymbol{p})\in C^2(\mathbb{R}^3\setminus \{\boldsymbol{0}\}).
\end{equation}

The paper is organized as follows. In section 2, we recall the mathematical representations for anisotropic surface diffusion, obtain a symmetrized variational formulation and propose a SP-FEM to discretize it. In section 3,
we establish energy stability of the proposed SP-PFEM. Extensive numerical results are reported to demonstrate the efficiency and accuracy as well as structure-preserving properties in section 4.
Finally, some conclusions are drawn in section 5.

\section{A new symmetrized variational formulation and its discretization}

This section first discusses the mathematical representations for the anisotropic surface diffusion. Then to introduce the weak formulation of $\mu$ in 3D, we generalize the surface energy matrix $\boldsymbol{Z}_k (\boldsymbol{n})$ for 2D anisotropic surface diffusion into 3D. A symmetrized conservative variational formulation for anisotropic surface diffusion in 3D is then derived by using the weak formulation and the surface energy matrix $\boldsymbol{Z}_k(\boldsymbol{n})$, and we show the two geometric properties are preserved for the new symmetrized variational formulation. Finally, by adopting backward Euler in time and the parametric finite element method in space, we derive the full discretization for the variational formulation and establish its structural preserving properties.

\subsection{Mathematical representations for anisotropic surface diffusion}
Let the closed surface $S:=S (t)$ be parameterized by $\boldsymbol{X} (\boldsymbol{\rho},t)$ as
\begin{equation}
	\boldsymbol{X}(t):\; \Omega\to\mathbb{R}^3, \boldsymbol{\rho}\,\mapsto \boldsymbol{X} (\boldsymbol{\rho},t)= (X_1 (\boldsymbol{\rho},t),X_2 (\boldsymbol{\rho},t),X_3 (\boldsymbol{\rho},t))^T,
\end{equation}
where $\Omega\subset \mathbb{R}^2$, then the motion of $S (t)$ under
the anisotropic surface diffusion \eqref{eq: geo form} can be mathematically
described by the following geometric partial differential equations via
the Cahn-Hoffman $\boldsymbol{\xi}$-vector as \cite{deckelnick2005computation}
\begin{subequations}
\label{eqn:original formulation gamma(n) 1}
\begin{numcases}{}
\label{eqn:original gamma(n) aniso eq1 1}
\partial_t\boldsymbol{X}(\boldsymbol{\rho},t) =(\Delta_S\, \mu) \boldsymbol{n}, \qquad \boldsymbol{\rho}\in\Omega, \quad t>0,  \\
\label{eqn:original gamma(n) aniso eq2 1}
\mu =\nabla_S \cdot\boldsymbol{\xi} , \qquad
\boldsymbol{\xi}=\nabla \gamma(\boldsymbol{p})\big|_{\boldsymbol{p}=\boldsymbol{n}}.
\end{numcases}
\end{subequations}
The anisotropic surface diffusion
\eqref{eqn:original formulation gamma(n) 1}
can also be regarded as a geometric flow from the given initial closed
surface $S_0:=S(0)\subset \mathbb{R}^3$ to the surface $S(t)\subset \mathbb{R}^3$. We define the function spaces over the evolving surface $S (t)=\boldsymbol{X} (\boldsymbol{\rho},t)$.
\begin{equation}\label{l2 space}
	L^2 (S (t)):=\Big\{u:S (t)\to \mathbb{R}\,\Big|\,\int_{S (t)}|u|^2 dA<\infty\Big\},
\end{equation}
equipped with the $L^2$-inner product
\begin{equation}\label{l2 inner product}
	\left (u,v\right)_{S (t)}:=\int_{S (t)}u\,v dA, \quad\forall u, v\in L^2 (S (t)),
\end{equation}
here $dA$ is the surface measure. This inner product can be extend to $[L^2 (S (t))]^3$ by replacing the scalar product $u\, v$ by the vector inner product $\boldsymbol{u}\cdot \boldsymbol{v}$. And we adopt the angle bracket to emphasize the inner product for two matrix-valued functions $\boldsymbol{U}, \boldsymbol{V}$ in $[L^2 (S (t))]^{3\times 3}$,
\begin{equation}\label{l2 inner product matrix}
 	\langle \boldsymbol{U}, \boldsymbol{V} \rangle_{S(t)}:=\int_{S(t)}\boldsymbol{U}:\boldsymbol{V}\,dA, \quad \forall \,\boldsymbol{U}, \boldsymbol{V}\in [L^2(S(t))]^{3\times 3},
 \end{equation}
 here $\boldsymbol{U}:\boldsymbol{V}=\text{Tr} (\boldsymbol{V}^T\boldsymbol{U})$ is the Frobenius inner product. Furthermore, we introduce the Sobolev space
\begin{equation}\label{h1 space}
	H^1 (S (t)):=\Big\{u:S (t)\to \mathbb{R}\,\Big|\, u\in L^2 (S (t)),\,\nabla_S u\in [L^2 (S (t))]^3\Big\}.
\end{equation}
And this definition is also valid for the function in $[H^1 (S (t))]^3$.

We refer to the definition of surface gradient $\nabla_S$ for scalar-valued functions \cite{deckelnick2005computation}. And the surface gradient for a vector-valued function $\boldsymbol{F}=(F_1, F_2, F_3)^T$ is defined as
\begin{equation}
	\nabla_S \boldsymbol{F}:=(\nabla_S F_1,\, \nabla_S F_2,\, \nabla_S F_3)^T\in \mathbb{R}^{3\times 3}.
\end{equation}

\subsection{A new symmetrized variational formulation and its property}

First, we generalize the 2D symmetric surface energy matrix $\boldsymbol{Z}_k (\boldsymbol{n})$ proposed in \cite{bao2021symmetrized} into 3D by
\begin{equation}\label{def of Z_k}
	\boldsymbol{Z}_k (\boldsymbol{n})=\gamma (\boldsymbol{n})I_3-\boldsymbol{n}\boldsymbol{\xi}^T (\boldsymbol{n})-\boldsymbol{\xi} (\boldsymbol{n})\boldsymbol{n}^T+k (\boldsymbol{n})\boldsymbol{n}\boldsymbol{n}^T, \quad\forall \boldsymbol{n}\in \mathbb{S}^2,
\end{equation}
where $I_3$ is the $3\times 3$ indentity matrix, $k (\boldsymbol{n})$ is the stabilizing function which ensures $\boldsymbol{Z}_k (\boldsymbol{n})$ is positive definite.

We then show this generalization of $\boldsymbol{Z}_k(\boldsymbol{n})$ is reasonable by showing the strong formulation $\mu\boldsymbol{n}=-\partial_s(\boldsymbol{Z}_k(\boldsymbol{n})\partial_s \boldsymbol{X})$ introduced in \cite{bao2021symmetrized} for the weighted mean curvature $\mu$ in 2D can be generalized to the following weak formulation in 3D.

\begin{lemma}[The weak formulation for $\mu$]\label{lem: weak mu}
The weighted mean curvature $\mu$ satisfies the following weak formulation.
\begin{equation}\label{alt def of mu}
	\left (\mu,\boldsymbol{n}\cdot \boldsymbol{\omega}\right)_{S}=\big\langle \boldsymbol{Z}_k (\boldsymbol{n})\nabla_S\boldsymbol{X}, \nabla_S \boldsymbol{\omega}\big\rangle_S,
\end{equation}
where $\boldsymbol{\omega}= (\omega_1,\omega_2,\omega_3)^T: S\to \mathbb{R}^3$ is a smooth test function.
\end{lemma}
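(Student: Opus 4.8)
The plan is to turn the right-hand side of \eqref{alt def of mu} into the left-hand side in two stages: an algebraic simplification of the matrix $\boldsymbol{Z}_k(\boldsymbol{n})\nabla_S\boldsymbol{X}$, followed by surface integration by parts. Throughout I would use $\mu=\nabla_S\cdot\boldsymbol{\xi}$ from \eqref{mu-xi-relation}, the fundamental identity $\nabla_S\boldsymbol{X}=I_3-\boldsymbol{n}\boldsymbol{n}^T=:\boldsymbol{P}$ (the surface gradient of the position map is the tangential projection), and Euler's relation for the degree-one homogeneous $\gamma(\boldsymbol{p})$, namely $\boldsymbol{n}\cdot\boldsymbol{\xi}(\boldsymbol{n})=\gamma(\boldsymbol{n})$.

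First I would multiply out $\boldsymbol{Z}_k(\boldsymbol{n})\boldsymbol{P}$ from \eqref{def of Z_k}. The terms $-\boldsymbol{\xi}\boldsymbol{n}^T\boldsymbol{P}$ and $k\,\boldsymbol{n}\boldsymbol{n}^T\boldsymbol{P}$ vanish because $\boldsymbol{n}^T\boldsymbol{n}=1$, while $-\boldsymbol{n}\boldsymbol{\xi}^T\boldsymbol{P}$ produces $-\boldsymbol{n}\boldsymbol{\xi}^T+\gamma\,\boldsymbol{n}\boldsymbol{n}^T$, which cancels the $-\gamma\,\boldsymbol{n}\boldsymbol{n}^T$ coming from $\gamma I_3\boldsymbol{P}$. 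The net result is the clean identity $\boldsymbol{Z}_k(\boldsymbol{n})\nabla_S\boldsymbol{X}=\gamma(\boldsymbol{n})I_3-\boldsymbol{n}\boldsymbol{\xi}^T(\boldsymbol{n})$; note that both the stabilizing function $k$ and the symmetrizing term $\boldsymbol{\xi}\boldsymbol{n}^T$ drop out entirely, which is exactly why $\boldsymbol{Z}_k$ may be chosen positive definite without affecting the weak form.

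It then remains to prove $\langle\gamma I_3-\boldsymbol{n}\boldsymbol{\xi}^T,\nabla_S\boldsymbol{\omega}\rangle_S=(\nabla_S\cdot\boldsymbol{\xi},\,\boldsymbol{n}\cdot\boldsymbol{\omega})_S$. I would expand the Frobenius inner product componentwise, writing $(\gamma I_3):\nabla_S\boldsymbol{\omega}=\gamma\,\nabla_S\cdot\boldsymbol{\omega}$ and $(\boldsymbol{n}\boldsymbol{\xi}^T):\nabla_S\boldsymbol{\omega}=\sum_{i,j}n_i\xi_j\,\partial_j^S\omega_i$, and then integrate by parts on the closed surface $S$ via the componentwise formula $\int_S f\,\partial_j^S g\,dA=-\int_S g\,\partial_j^S f\,dA+\int_S fg\,Hn_j\,dA$. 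The mean-curvature boundary-type terms produced by the two groups both equal $\int_S H\gamma(\boldsymbol{n}\cdot\boldsymbol{\omega})\,dA$ (again using $\boldsymbol{\xi}\cdot\boldsymbol{n}=\gamma$) and hence cancel, while the term $n_i(\nabla_S\cdot\boldsymbol{\xi})$ generated when the derivative hits $\xi_j$ reassembles into $\int_S\mu(\boldsymbol{n}\cdot\boldsymbol{\omega})\,dA$, the desired left-hand side.

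The one genuinely geometric step, and the part I would be most careful with, is showing that the remaining interior contributions cancel, i.e. that $(\boldsymbol{\xi}\cdot\nabla_S)\boldsymbol{n}-\nabla_S\gamma(\boldsymbol{n})=\boldsymbol{0}$ pointwise on $S$. By the chain rule, $\nabla_S\gamma(\boldsymbol{n})$ has $j$-th component $\sum_l\xi_l\,\partial_j^S n_l$, whereas $(\boldsymbol{\xi}\cdot\nabla_S)\boldsymbol{n}$ has $j$-th component $\sum_l\xi_l\,\partial_l^S n_j$; these agree precisely because the extended Weingarten map $\nabla_S\boldsymbol{n}$ is symmetric, $\partial_l^S n_j=\partial_j^S n_l$ (equivalently, the shape operator is self-adjoint, which follows from $|\boldsymbol{n}|=1$ together with the symmetry of the Hessian of the signed-distance extension of $\boldsymbol{n}$). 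With this identity the extra term drops and the two sides coincide. Apart from this symmetry fact the computation is routine bookkeeping; the subtlety to watch is keeping the $Hn_j$ boundary-type terms consistent so that they cancel.
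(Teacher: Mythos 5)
Your proof is correct, but it takes a genuinely different route from the paper in its central step. Both arguments hinge on the same algebraic observation that the symmetrizing and stabilizing terms are invisible in the weak form because $\boldsymbol{\xi}\boldsymbol{n}^T(I_3-\boldsymbol{n}\boldsymbol{n}^T)=\boldsymbol{0}$ and $\boldsymbol{n}\boldsymbol{n}^T(I_3-\boldsymbol{n}\boldsymbol{n}^T)=\boldsymbol{0}$; you apply it at the outset to reduce $\boldsymbol{Z}_k(\boldsymbol{n})\nabla_S\boldsymbol{X}$ to $\gamma(\boldsymbol{n})I_3-\boldsymbol{n}\boldsymbol{\xi}^T$, while the paper adds these zero terms at the end --- a cosmetic difference. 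The substantive difference is how the identity $\left(\mu,\boldsymbol{n}\cdot\boldsymbol{\omega}\right)_S=\langle \gamma(\boldsymbol{n})I_3-\boldsymbol{n}\boldsymbol{\xi}^T,\nabla_S\boldsymbol{\omega}\rangle_S$ is obtained: the paper does not prove it, but imports it as equation (8.18) of Deckelnick--Dziuk--Elliott (the known first-variation formula for anisotropic area), merely rewriting the componentwise sums into Frobenius-product form via $\nabla_S X_k\cdot\nabla_S\omega_l=\underline{D}_k\omega_l$. You instead derive it from scratch by componentwise integration by parts on the closed surface, tracking and cancelling the mean-curvature terms $\int_S H\gamma\,(\boldsymbol{n}\cdot\boldsymbol{\omega})\,dA$, and invoking the symmetry of the extended Weingarten map $\underline{D}_j n_l=\underline{D}_l n_j$ (justified through the signed-distance extension) to kill the residual term $(\boldsymbol{\xi}\cdot\nabla_S)\boldsymbol{n}-\nabla_S\gamma(\boldsymbol{n})$. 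Your chain-rule computation $\underline{D}_j\gamma(\boldsymbol{n})=\sum_l\xi_l\underline{D}_j n_l$ and the use of Euler's relation $\boldsymbol{\xi}\cdot\boldsymbol{n}=\gamma(\boldsymbol{n})$ are both legitimate under assumption \eqref{engstabgmp}. What each approach buys: the paper's proof is shorter and shifts the geometric burden to a standard reference, while yours is self-contained and makes explicit exactly which geometric inputs (closedness of $S$, self-adjointness of the shape operator, homogeneity of $\gamma$) the weak formulation actually rests on --- at the cost of more delicate bookkeeping that, as you note, is where an error would most easily creep in.
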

\begin{proof}
We adopt the notation $\nabla_S f=(\underline{D}_1f, \underline{D}_2 f, \underline{D}_3 f)^T$. Noticing the fact $\underline{D}_i X_k=\delta_{i,k}-n_kn_i$ and $\nabla_S f\cdot \boldsymbol{n}=0$, we obtain
\begin{equation}\label{annoying underline}
	\nabla_S X_k\cdot\nabla_S \omega_l=\sum_{i=1}^3 (\delta_{i,k}-n_kn_i)\underline{D}_i\omega_l=\underline{D}_k\omega_l-n_k \nabla_S \omega_l\cdot \boldsymbol{n}=\underline{D}_k\omega_l.
\end{equation}

Substitute the identity \eqref{annoying underline} in \cite[equation (8.18)]{deckelnick2005computation} yields the following identity
\begin{equation}\label{thm2.1 id1}
	\left (\mu,\boldsymbol{n}\cdot \boldsymbol{\omega}\right)_{S}=\gamma (\boldsymbol{n})\sum_{l=1}^3\int_S \nabla_S X_l\cdot \nabla_S\omega_l \,dA-\sum_{k,l=1}^3\int_{S}\xi_k n_l\nabla_S X_k\cdot \nabla_S \omega_l \,dA.
\end{equation}
Obviously, the second term $\langle\gamma (\boldsymbol{n}) \nabla_S\boldsymbol{X},\nabla_S\boldsymbol{\omega}\rangle_S$ corresponds to $\gamma (\boldsymbol{n})I_3$ in $\boldsymbol{Z}_k (\boldsymbol{n})$. Now by simplifying the last term, we have
\begin{align}\label{thm2.1 id2}
\sum_{k,l=1}^3\int_{S}\xi_k n_l\nabla_S X_k\cdot \nabla_S \omega_l \,dA &=\int_S \left (\sum_{k=1}^3\xi_k (\nabla_S X_k)\right)\cdot\left (\sum_{l=1}^3n_l (\nabla_S\omega_l)\right)dA\nonumber\\
&=\int_S \left ( (\nabla_S\boldsymbol{X})^T\boldsymbol{\xi}\right)\cdot\left ( (\nabla_S\boldsymbol{\omega})^T\boldsymbol{n}\right)dA\nonumber\\
&=\int_S \text{Tr}\left ( (\nabla_S\boldsymbol{\omega})^T\boldsymbol{n}\boldsymbol{\xi}^T (\nabla_S\boldsymbol{X})\right)dA\nonumber\\
&=\int_S \left (\boldsymbol{n}\boldsymbol{\xi}^T (\nabla_S\boldsymbol{X})\right):\Bigl(\nabla_S\boldsymbol{\omega}\Bigr)dA\nonumber\\
&=\langle\boldsymbol{n}\boldsymbol{\xi}^T\nabla_S\boldsymbol{X},\nabla_S\boldsymbol{\omega}\rangle_S,
\end{align}
which is the $\boldsymbol{n}\boldsymbol{\xi}^T (\boldsymbol{n})$ part in $\boldsymbol{Z}_k (\boldsymbol{n})$.

Finally, recall the identity $\nabla_S \boldsymbol{X}=I_3 - \boldsymbol{n}\boldsymbol{n}^T$ and combine the two identities \eqref{thm2.1 id1} and \eqref{thm2.1 id2} yields
\begin{align}
	\left (\mu,\boldsymbol{n}\cdot \boldsymbol{\omega}\right)_{S}&=\langle (\gamma (\boldsymbol{n})I_3-\boldsymbol{n}\boldsymbol{\xi}^T)\nabla_S\boldsymbol{X}, \nabla_S \boldsymbol{\omega}\rangle_{S}\nonumber\\
	&=\langle \boldsymbol{Z}_k (\boldsymbol{n})\nabla_S\boldsymbol{X},\nabla_S \boldsymbol{\omega}\rangle_S+\langle (\boldsymbol{\xi}\boldsymbol{n}^T-k(\boldsymbol{n})\boldsymbol{n}\boldsymbol{n}^T)(I_3-\boldsymbol{n}\boldsymbol{n}^T), \nabla_S \boldsymbol{\omega}\rangle_{S}\nonumber\\
	&=\langle \boldsymbol{Z}_k(\boldsymbol{n})\nabla_S\boldsymbol{X},\nabla_S \boldsymbol{\omega}\rangle_S,
\end{align}
which is the desired result.
\end{proof}

With the weak formulation of $\mu$ \eqref{alt def of mu} given in lemma \ref{lem: weak mu}, by taking integration by parts, we can easily derive the following variational formulation for the anisotropic surface diffusion \eqref{eq: geo form}. For a given closed initial surface $S (0):=S_0$, find the solution $ (\boldsymbol{X} (\cdot,t), \mu (\cdot,t))\in [H^1(S(t))]^3\times H^1 (S (t))$ such that
\begin{subequations}
\label{eqn:New 3Daniso}
\begin{align}
\label{eqn:New 3Daniso1}
&\left (\partial_t\boldsymbol{X}\cdot\boldsymbol{n},\psi\right)_{S (t)}+\left (\nabla_S\mu,\nabla_S\psi\right)_{S (t)}=0\qquad \forall \psi\in H^1 (S (t)),\\[0.5em]
\label{eqn:New 3Daniso2}
&\left (\mu\boldsymbol{n},\boldsymbol{\omega}\right)_{S (t)}-\langle Z_k (\boldsymbol{n})\nabla_S\boldsymbol{X},\nabla_S \boldsymbol{\omega}\rangle_{S (t)}=0\qquad \forall \boldsymbol{\omega}\in [H^1 (S (t))]^3,
\end{align}
\end{subequations}

Denote the enclosed volume and the free energy of $S (t)$ as $V (t)$ and $W (t)$, respectively, which are defined by
\begin{equation}
	V (t):=\frac{1}{3}\int_{S (t)}\boldsymbol{X}\cdot\boldsymbol{n}\,dA,\qquad W (t):=\int_{S (t)}\gamma (\boldsymbol{n})\,dA.
\end{equation}
We then show the two geometric properties still hold for the variational formulation \eqref{eqn:New 3Daniso}.

\begin{theorem}
The enclosed volume $V (t)$ and the free energy $W (t)$ of the solution $S (t)$ of the variational formulation \eqref{eqn:New 3Daniso} are conserved and dissipative, respectively.
\end{theorem}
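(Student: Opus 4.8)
The plan is to differentiate the enclosed volume $V(t)$ and free energy $W(t)$ in time and then substitute carefully chosen test functions into the variational formulation \eqref{eqn:New 3Daniso}. The unifying observation is that the scalar $\partial_t\boldsymbol{X}\cdot\boldsymbol{n}$ (the normal velocity of the flow) controls both geometric quantities: testing \eqref{eqn:New 3Daniso1} with $\psi\equiv 1$ isolates the volume, while testing with $\psi=\mu$ isolates the energy dissipation rate.

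For volume conservation, I would begin from the transport identity $\frac{d}{dt}V(t)=\int_{S(t)}\partial_t\boldsymbol{X}\cdot\boldsymbol{n}\,dA=\left(\partial_t\boldsymbol{X}\cdot\boldsymbol{n},1\right)_{S(t)}$, which follows from the Reynolds transport theorem applied to the region enclosed by $S(t)$ (equivalently, from differentiating $V(t)=\frac{1}{3}\int_{S(t)}\boldsymbol{X}\cdot\boldsymbol{n}\,dA$ directly). Testing the first equation \eqref{eqn:New 3Daniso1} with the constant $\psi\equiv 1$ makes the surface-gradient term vanish because $\nabla_S 1=\boldsymbol{0}$, leaving $\left(\partial_t\boldsymbol{X}\cdot\boldsymbol{n},1\right)_{S(t)}=0$, that is $\frac{d}{dt}V(t)=0$.

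For energy dissipation, I would use that $\mu$ is by definition the weighted mean curvature $\delta W/\delta S$, so that the first variation of $W$ in a direction $\boldsymbol{\omega}$ equals $\left(\mu,\boldsymbol{n}\cdot\boldsymbol{\omega}\right)_{S}$, which is exactly the left-hand side of Lemma \ref{lem: weak mu}. Setting $\boldsymbol{\omega}=\partial_t\boldsymbol{X}$ yields the transport identity $\frac{d}{dt}W(t)=\left(\mu,\partial_t\boldsymbol{X}\cdot\boldsymbol{n}\right)_{S(t)}$. I would then test \eqref{eqn:New 3Daniso1} with $\psi=\mu$ to obtain $\left(\partial_t\boldsymbol{X}\cdot\boldsymbol{n},\mu\right)_{S(t)}=-\left(\nabla_S\mu,\nabla_S\mu\right)_{S(t)}$, and combine the two relations to conclude $\frac{d}{dt}W(t)=-\left(\nabla_S\mu,\nabla_S\mu\right)_{S(t)}\le 0$, the claimed dissipation.

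I expect the main obstacle to be the rigorous justification of the weighted-area transport identity $\frac{d}{dt}W(t)=\left(\mu,\partial_t\boldsymbol{X}\cdot\boldsymbol{n}\right)_{S(t)}$, i.e. the first variation of $\int_{S(t)}\gamma(\boldsymbol{n})\,dA$. This requires the transport theorem for evolving surfaces, tracking the time derivatives of both $\gamma(\boldsymbol{n})$ (through the moving normal) and the surface measure $dA$, followed by an integration by parts that recovers $\mu=\nabla_S\cdot\boldsymbol{\xi}$; the fact that all tangential contributions cancel, leaving only $\partial_t\boldsymbol{X}\cdot\boldsymbol{n}$, is the same computation that underlies Lemma \ref{lem: weak mu}. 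The volume transport identity, by contrast, is elementary, and once both identities are established the conclusion is immediate.
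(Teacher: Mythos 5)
Your proposal is correct and follows essentially the same route as the paper: both proofs differentiate $V(t)$ and $W(t)$ using the transport identities $\frac{dV}{dt}=\left(\partial_t\boldsymbol{X}\cdot\boldsymbol{n},1\right)_{S(t)}$ and $\frac{dW}{dt}=\left(\partial_t\boldsymbol{X}\cdot\boldsymbol{n},\mu\right)_{S(t)}$ (the paper cites these from the literature, where you derive the latter from $\mu=\delta W/\delta S$), and then test \eqref{eqn:New 3Daniso1} with $\psi\equiv 1$ and $\psi=\mu$, respectively, to conclude conservation and dissipation.
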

\begin{proof}
Taking the derivative of $V (t)$ with respect to $t$. From \cite{taylor1994linking}, we know that
\begin{equation}
	\frac{dV (t)}{dt}=\int_{S (t)}V_n\,dA=\left (\partial_t\boldsymbol{X}\cdot\boldsymbol{n},1\right)_{S (t)}=0.
\end{equation}

Similarly, the derivative of $W (t)$ with respect to $t$ is
\begin{equation}
	\frac{dW (t)}{dt}=\int_{S (t)} V_n\mu \,dA=\left (\partial_t\boldsymbol{X}\cdot\boldsymbol{n},\mu \right)_{S (t)}=-\left (\nabla_S\mu,\nabla_S\mu \right)_{S (t)},
\end{equation}
which means $\frac{dW (t)}{dt}\leq 0$.
\end{proof}


\subsection{A structural-preserving parametric finite element method}

We take $\tau>0$ to be the time step size, and the discrete time levels are $t_m=m\tau$ for each $m\geq 0$. For spatial discretization, as illustrated in figure \ref{fig: illuDir}, the surface $S (t_m)$ is approximated by a polyhedron $S^m=\cup_{j=1}^J\bar{\sigma}_j^m$ with $J$ mutually disjoint non-degenerated triangles surfaces $\sigma_j^m$ and $I$ vertices $\boldsymbol{q}_i^m$. We further denote $\{\boldsymbol{q}_{j_1}^m, \boldsymbol{q}_{j_2}^m, \boldsymbol{q}_{j_3}^m\}$ as the three counterclockwise vertices of the triangle $\sigma_j^m$, and $\mathcal{J}\{\sigma_j^m\}:= (\boldsymbol{q}_{j_2}^m-\boldsymbol{q}_{j_1}^m)\times (\boldsymbol{q}_{j_3}^m-\boldsymbol{q}_{j_2}^m)$ is the orientation vector with respect to $\sigma_j^m$, and the outward unit normal vector $\boldsymbol{n}_j^m$ of $\sigma_j^m$ is thus given by $\boldsymbol{n}_j^m=\frac{\mathcal{J}\{\sigma_j^m\}}{|\mathcal{J}\{\sigma_j^m\}|}$.

\begin{figure}[hbtp!]
\centering
\includegraphics[width=0.8\textwidth]{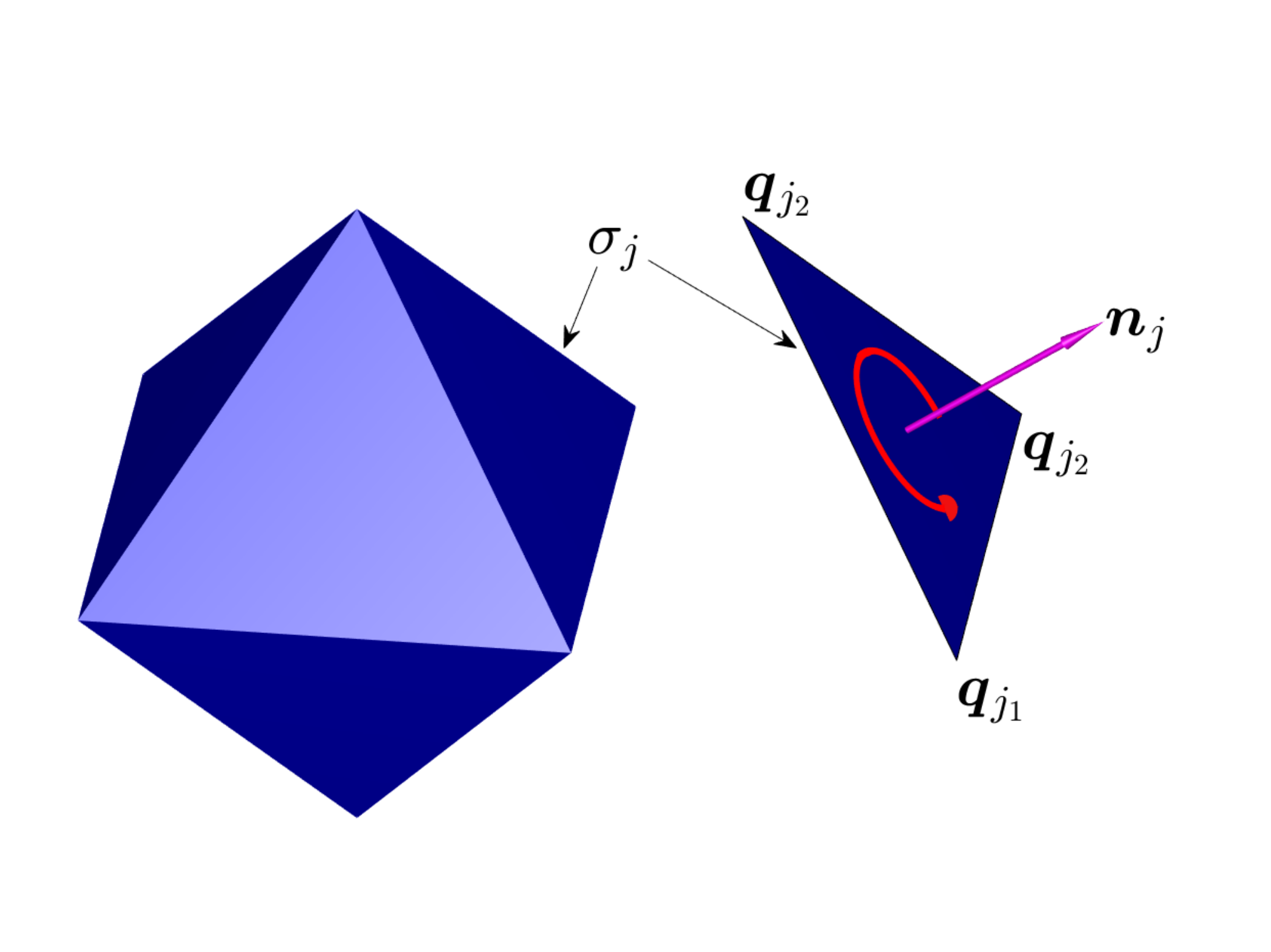}
\caption{An illustration of the approximation polyhedron $S^0$. The vertices $\{\boldsymbol{q}_{j_1}, \boldsymbol{q}_{j_2}, \boldsymbol{q}_{j_3}\}$ of the triangle $\sigma_j$ is oriented counterclockwise, see the red circular arrow. And the direction of the normal vector $\boldsymbol{n}_j$ is determined by the right-hand rule.}
\label{fig: illuDir}
\end{figure}

The finite element space with respect to the surface $S^m=\cup_{j=1}^J\bar{\sigma}_j^m$  is defined as follows
\begin{equation}
	\mathbb{K}^m:=\Big\{u\in C (S^m)\,\Big|\,u|_{\sigma_j^m}\in \mathcal{P}^1 (\sigma_j^m),\,\, \forall 1\leq j\leq J\Big\},
\end{equation}
equipped with the mass lumped inner product $\left (\cdot,\cdot\right)_{S^m}$ as
\begin{equation}
	\left (f,g\right)_{S^m}:=\frac{1}{3}\sum_{j=1}^J\sum_{i=1}^3|\sigma_j^m|f\left ( (\boldsymbol{q}_{j_i}^m)^-\right)g\left ( (\boldsymbol{q}_{j_i}^m)^-\right).
\end{equation}
Where $\mathcal{P}^1 (\sigma_j^m)$ is the space of polynomials on $\sigma_j^m$ with degree at almost $1$, $|\sigma_j^m|:=\frac{1}{2}|\mathcal{J}\{\sigma_j^m\}|$ denotes the area of $\sigma_j^m$, and $f\left ( (\boldsymbol{q}_{j_i}^m)^-\right)$ means the one-sided limit of $f (x)$ at $\boldsymbol{q}_{j_i}^m$ inside $\sigma_j^m$. This definition is also valid for vector- and matrix-valued function, and the inner product of the matrix-valued functions $\boldsymbol{U}$ and $\boldsymbol{V}$ is also emphsized by the angle bracket as
\begin{equation}
 	\langle \boldsymbol{U}, \boldsymbol{V}\rangle_{S^m}:=\frac{1}{3}\sum_{j=1}^J\sum_{i=1}^3 |\sigma_j^m| \boldsymbol{U}((\boldsymbol{q}_{j_i}^m)^-):\boldsymbol{V}((\boldsymbol{q}_{j_i}^m)^-).
 \end{equation}
Finally, the discretized surface gradient operator $\nabla_S$ for $f\in \mathbb{K}^m$ is given by
\begin{equation}\label{dis surface gradient}
	\nabla_S f|_{\sigma_j^m}:=\Bigl (f (\boldsymbol{q}_{j_1}^m) (\boldsymbol{q}_{j_2}^m-\boldsymbol{q}_{j_3}^m)+f (\boldsymbol{q}_{j_2}^m) (\boldsymbol{q}_{j_3}^m-\boldsymbol{q}_{j_1}^m)+f (\boldsymbol{q}_{j_3}^m) (\boldsymbol{q}_{j_1}^m-\boldsymbol{q}_{j_2}^m)\Bigr)\times \frac{\boldsymbol{n}_j^m}{|\mathcal{J}\{\sigma_j^m\}|},
\end{equation}
and for vector-valued function $\boldsymbol{F}= (F_1,F_2,F_3)^T\in [\mathbb{K}^m]^3, \nabla_S \boldsymbol{F}:= (\nabla_S F_1, \nabla_S F_2, \nabla_S F_3)^T$.

The full-implicit structural-preserving finite element method (SP-PFEM) for the variational formulation \eqref{eqn:New 3Daniso} can then be stated as follows: given the initial approximation $S^0=\cup_{j=1}^J\bar{\sigma}_j^0$ of $S (0)$. For each time step $t_m=m\tau, \,m=0,1,2,\ldots$, find the solution $\bigl (\boldsymbol{X}^{m+1}, \mu^{m+1}\bigr)\in [\mathbb{K}^m]^3\times \mathbb{K}^m$ such that
\begin{subequations}
\label{eqn:SP-PFEM}
\begin{align}
\label{eqn:SP-PFEM eq1}
&\left (\frac{\boldsymbol{X}^{m+1}-\boldsymbol{X}^m}{\tau}\cdot\boldsymbol{n}^{m+\frac{1}{2}},\psi\right)_{S^m}+\left (\nabla_S\mu^{m+1},\nabla_S\psi\right)_{S^m}=0,\qquad \forall \psi\in \mathbb{K}^m,\\[0.5em]
\label{eqn:SP-PFEM eq2}
&\left (\mu^{m+1}\boldsymbol{n}^{m+\frac{1}{2}},\boldsymbol{\omega}\right)_{S^m}-\langle \boldsymbol{Z}_k (\boldsymbol{n}^m)\nabla_S\boldsymbol{X}^{m+1},\nabla_S \boldsymbol{\omega}\rangle_{S^m}=0,\qquad \forall \boldsymbol{\omega}\in [\mathbb{K}^m]^3.
\end{align}
\end{subequations}
Here $\boldsymbol{X}^m (\boldsymbol{q}_i^m)=\boldsymbol{q}_i^m$, $\boldsymbol{X}^{m+1} (\boldsymbol{q}_i^m)=\boldsymbol{q}_i^{m+1}$ for each $i$, $\boldsymbol{n}^m|_{\sigma_j^m}=\boldsymbol{n}_j^m$,  $\sigma_j^{m+1}=\boldsymbol{X}^{m+1} (\sigma_j^m)$ is the triangle with counterclockwisely ordered vertices $\{\boldsymbol{q}_{j_1}^{m+1}, \boldsymbol{q}_{j_2}^{m+1}, \boldsymbol{q}_{j_3}^{m+1}\}$ for each $j$, and $S^{m+1}=\cup_{j=1}^J\bar{\sigma}_j^{m+1}$ for each $m$.
The semi-implicit approximation $\boldsymbol{n}^{m+\frac{1}{2}}$ of the outward normal vector $\boldsymbol{n}$ at $t=(m+\frac{1}{2})\tau$ is defined as follows
\begin{equation}\label{n half}
	\boldsymbol{n}^{m+\frac{1}{2}}|_{\sigma_{j}^m}:=\frac{\mathcal{J}\{\sigma_j^m\}+4\mathcal{J}\{\sigma_j^{m+\frac{1}{2}}\}+\mathcal{J}\{\sigma_j^{m+1}\}}{6|\mathcal{J}\{\sigma_j^m\}|},
\end{equation}
where $\sigma_j^{m+\frac{1}{2}}:=\frac{1}{2}\left (\sigma_j^m+\sigma_j^{m+1}\right)$.
\begin{remark}
We note the function $\boldsymbol{X}^{m+1}$ has different meanings at time step $t_m$ (as a function in $[\mathbb{K}^m]^3$) and $t_{m+1}$ (as a function in $[\mathbb{K}^{m+1}]^3$), and we adopt the same notation for simplicity.
\end{remark}

\subsection{Main results}
For the discretized polygon surface $S^m=\cup_{j=1}^J\bar{\sigma}_j^m$, its enclosed volume and surface energy are denoted as $V^m$ and $W^m$, respectively, which are defined as
\begin{subequations}
\begin{align}
\label{dis V}
&V^m:=\frac{1}{3}\int_{S^m}\boldsymbol{X}^m\cdot\boldsymbol{n}^mdA=\frac{1}{9}\sum_{j=1}^J\sum_{i=1}^3|\sigma_j^m|\boldsymbol{q}_{j_i}^m\cdot\boldsymbol{n}_j^m ,\\[0.5em]
\label{dis W}
&W^m:=\int_{S^m}\gamma (\boldsymbol{n}^m)dA=\sum_{j=1}^J|\sigma_j^m|\gamma (\boldsymbol{n}^m_j),\qquad\forall m\geq 0.
\end{align}
\end{subequations}
Denote the following axillary function $F_k (\boldsymbol{n}, \boldsymbol{u}, \boldsymbol{v}):[\mathbb{S}^2]^3\to \mathbb{R}$ as
\begin{align}\label{def of F_k}
	F_k (\boldsymbol{n}, \boldsymbol{u}, \boldsymbol{v}):= (\boldsymbol{u}^T\boldsymbol{Z}_k (\boldsymbol{n})\cdot\boldsymbol{u}) (\boldsymbol{v}^T\boldsymbol{Z}_k (\boldsymbol{n})\cdot\boldsymbol{v}),
\end{align}
and define the minimal stabilizing function $k_0 (\boldsymbol{n}):\mathbb{S}^2\to\mathbb{R}$ as (its existence will be given in next section)
\begin{equation}\label{def of k_0}
	k_0 (\boldsymbol{n})=\inf\Big\{k (\boldsymbol{n})\Big|F_k (\boldsymbol{n}, \boldsymbol{u},\boldsymbol{v})\geq \gamma^2 (\boldsymbol{u}\times\boldsymbol{v}),\quad \forall \boldsymbol{u},\boldsymbol{v}\in \mathbb{S}^2\Big\}.
\end{equation}
Then for the SP-PFEM \eqref{eqn:SP-PFEM}, we have 
\begin{theorem}[structural-preserving]\label{thm: main}
Assume $\gamma(\boldsymbol{n})$ satisfies \eqref{engstabgmp} and take
$k(\boldsymbol{n})$ in \eqref{def of Z_k} satisfying $k(\boldsymbol{n})\geq k_0(\boldsymbol{n})$ for $\boldsymbol{n}\in \mathbb{S}^2$, then the SP-PFEM \eqref{eqn:SP-PFEM} is volume conservation and energy dissiption, i.e.
\begin{subequations}
\begin{align}
\label{dis V prop}
&V^{m+1}=V^m=\ldots=V^0,\qquad \forall m\geq 0,\\[0.5em]
\label{dis W prop}
&W^{m+1}\leq W^m\leq\ldots\leq W^0,\qquad\forall m\geq 0.
\end{align}
\end{subequations}
\end{theorem}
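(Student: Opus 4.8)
The plan is to prove the two statements \eqref{dis V prop} and \eqref{dis W prop} separately, in each case by inserting a well-chosen test function into the scheme \eqref{eqn:SP-PFEM} and then exploiting the algebraic structure of $\boldsymbol{Z}_k(\boldsymbol{n})$.

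\textbf{Volume conservation.} For \eqref{dis V prop} I would first establish the purely geometric identity
\[
V^{m+1}-V^m=\left((\boldsymbol{X}^{m+1}-\boldsymbol{X}^m)\cdot\boldsymbol{n}^{m+\frac12},1\right)_{S^m}.
\]
The key observation is that along the linear homotopy $\boldsymbol{X}(h)=(1-h)\boldsymbol{X}^m+h\boldsymbol{X}^{m+1}$ the enclosed volume $V(h)$ is a cubic polynomial in $h$: by \eqref{dis V} it is cubic in the vertex coordinates, which are affine in $h$. Hence $V^{m+1}-V^m=\int_0^1 V'(h)\,dh$ is integrated \emph{exactly} by Simpson's rule on the nodes $0,\tfrac12,1$, and the weights $1,4,1$ over $6$ together with the orientation vectors $\mathcal{J}\{\sigma_j^m\},\mathcal{J}\{\sigma_j^{m+\frac12}\},\mathcal{J}\{\sigma_j^{m+1}\}$ in \eqref{n half} are precisely what make the right-hand side reproduce this quadrature term by term. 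Granting the identity, I take $\psi\equiv1$ in \eqref{eqn:SP-PFEM eq1}; since $\nabla_S 1=0$ the second term drops and the first vanishes, giving $V^{m+1}=V^m$ and, by induction, \eqref{dis V prop}.

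\textbf{Energy dissipation.} For \eqref{dis W prop} I would test \eqref{eqn:SP-PFEM eq1} with $\psi=\mu^{m+1}$ and \eqref{eqn:SP-PFEM eq2} with $\boldsymbol{\omega}=\boldsymbol{X}^{m+1}-\boldsymbol{X}^m$. The mixed term $(\mu^{m+1}\boldsymbol{n}^{m+\frac12},\boldsymbol{X}^{m+1}-\boldsymbol{X}^m)_{S^m}$ is common to both, so eliminating it yields
\[
\langle\boldsymbol{Z}_k(\boldsymbol{n}^m)\nabla_S\boldsymbol{X}^{m+1},\nabla_S(\boldsymbol{X}^{m+1}-\boldsymbol{X}^m)\rangle_{S^m}=-\tau\,(\nabla_S\mu^{m+1},\nabla_S\mu^{m+1})_{S^m}\le0.
\]
Since $\boldsymbol{Z}_k(\boldsymbol{n}^m)$ is symmetric positive definite for $k\ge k_0$, the elementary inequality $\boldsymbol{a}^T A(\boldsymbol{a}-\boldsymbol{b})\ge\tfrac12\boldsymbol{a}^T A\boldsymbol{a}-\tfrac12\boldsymbol{b}^T A\boldsymbol{b}$, applied pointwise, upgrades this to
\[
\tfrac12\langle\boldsymbol{Z}_k(\boldsymbol{n}^m)\nabla_S\boldsymbol{X}^{m+1},\nabla_S\boldsymbol{X}^{m+1}\rangle_{S^m}\le\tfrac12\langle\boldsymbol{Z}_k(\boldsymbol{n}^m)\nabla_S\boldsymbol{X}^{m},\nabla_S\boldsymbol{X}^{m}\rangle_{S^m}.
\]
It then remains to identify the two sides with $W^{m+1}$ and $W^m$. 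For the right-hand side I use $\nabla_S\boldsymbol{X}^m=I_3-\boldsymbol{n}^m(\boldsymbol{n}^m)^T$ on each $\sigma_j^m$, reducing the quadratic form to $\boldsymbol{Z}_k(\boldsymbol{n}^m):(I_3-\boldsymbol{n}^m(\boldsymbol{n}^m)^T)=\mathrm{Tr}\,\boldsymbol{Z}_k-(\boldsymbol{n}^m)^T\boldsymbol{Z}_k\boldsymbol{n}^m$; the Euler relation $\boldsymbol{\xi}\cdot\boldsymbol{n}=\gamma(\boldsymbol{n})$ (degree-one homogeneity) gives $\mathrm{Tr}\,\boldsymbol{Z}_k=\gamma+k$ and $\boldsymbol{n}^T\boldsymbol{Z}_k\boldsymbol{n}=k-\gamma$, so the contraction equals $2\gamma(\boldsymbol{n}^m)$ and the right-hand side is exactly $W^m$.

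\textbf{The main obstacle.} The crux is the lower bound on the left-hand side, which must be proved element by element. Writing $A:=\nabla_S\boldsymbol{X}^{m+1}|_{\sigma_j^m}$, a rank-two map with $A\boldsymbol{n}_j^m=\boldsymbol{0}$ and $A^T\boldsymbol{n}_j^{m+1}=\boldsymbol{0}$, I would take its singular value decomposition $A=\sigma_1\boldsymbol{u}_1\boldsymbol{v}_1^T+\sigma_2\boldsymbol{u}_2\boldsymbol{v}_2^T$ with $\{\boldsymbol{u}_1,\boldsymbol{u}_2\}$ orthonormal in the tangent plane of $\sigma_j^{m+1}$. Then $\boldsymbol{Z}_k(\boldsymbol{n}^m):AA^T=\sigma_1^2\,\boldsymbol{u}_1^T\boldsymbol{Z}_k\boldsymbol{u}_1+\sigma_2^2\,\boldsymbol{u}_2^T\boldsymbol{Z}_k\boldsymbol{u}_2\ge2\sigma_1\sigma_2\sqrt{F_k(\boldsymbol{n}^m,\boldsymbol{u}_1,\boldsymbol{u}_2)}$ by AM-GM. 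Recognizing the singular-value product $\sigma_1\sigma_2=|\sigma_j^{m+1}|/|\sigma_j^m|$ as the discrete area-scaling factor, and using $\boldsymbol{u}_1\times\boldsymbol{u}_2=\pm\boldsymbol{n}_j^{m+1}$, the defining inequality of $k_0$ in \eqref{def of k_0} together with the evenness $\gamma(-\boldsymbol{n})=\gamma(\boldsymbol{n})$ from \eqref{engstabgmp} gives $\sqrt{F_k}\ge\gamma(\boldsymbol{n}_j^{m+1})$, hence the element bound $\tfrac12|\sigma_j^m|\,\boldsymbol{Z}_k:AA^T\ge|\sigma_j^{m+1}|\gamma(\boldsymbol{n}_j^{m+1})$. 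Summing over $j$ bounds the left-hand side below by $W^{m+1}$, and chaining the three relations gives $W^{m+1}\le W^m$. It is exactly in steps (i)--(iii) of this element bound that the construction of $\boldsymbol{Z}_k$ and of the minimal stabilizing function $k_0$ are designed to pay off; the legitimacy of the whole argument rests on the finiteness of $k_0(\boldsymbol{n})$ and the positive definiteness of $\boldsymbol{Z}_k$ for $k\ge k_0$, which are deferred to the next section.
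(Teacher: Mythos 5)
Your proposal is correct, and for the energy-dissipation half it follows essentially the same route as the paper: the same test functions $\psi=\mu^{m+1}$ and $\boldsymbol{\omega}=\boldsymbol{X}^{m+1}-\boldsymbol{X}^m$, the same Cauchy-type inequality \eqref{cauchy ineq}, the same identification $\tfrac12\langle\boldsymbol{Z}_k(\boldsymbol{n}^m)\nabla_S\boldsymbol{X}^m,\nabla_S\boldsymbol{X}^m\rangle_{S^m}=W^m$, and the same element-wise lower bound (the paper's Lemma \ref{lemma:gamma diff}) obtained from AM--GM, the defining inequality \eqref{def of k_0} of $k_0$, and the evenness of $\gamma$. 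Two of your steps differ only in dress: you compute $W^m$ from the trace identity $\boldsymbol{Z}_k:(I_3-\boldsymbol{n}\boldsymbol{n}^T)=(\gamma+k)-(k-\gamma)=2\gamma$, where the paper contracts against an orthonormal tangent frame via \eqref{eqn:alter gradient vector} (the two are identical since $I_3-\boldsymbol{n}\boldsymbol{n}^T=\boldsymbol{\tau}_1\boldsymbol{\tau}_1^T+\boldsymbol{\tau}_2\boldsymbol{\tau}_2^T$); and in the element bound you use the SVD of $\nabla_S\boldsymbol{X}^{m+1}|_{\sigma_j^m}$, so that $\boldsymbol{u}_1\times\boldsymbol{u}_2=\pm\boldsymbol{n}_j^{m+1}$ is exactly a unit vector, whereas the paper writes the gradient as $s\boldsymbol{u}\boldsymbol{\tau}_1^T+t\boldsymbol{v}\boldsymbol{\tau}_2^T$ with a tangent frame of the source triangle and lets the factor $|\boldsymbol{u}\times\boldsymbol{v}|$ cancel between the two sides of \eqref{aux ineq}; the substance (rank-one splitting, AM--GM, invoking $k_0$) is the same. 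For volume conservation the paper gives no proof at all, deferring to \cite{bao2021structurepreserving}, so your Simpson's-rule sketch is a genuine addition and is indeed the argument of that reference; to make it complete you would still need to justify the discrete transport identity $V'(h)=\tfrac16\sum_j\sum_i(\boldsymbol{q}_{j_i}^{m+1}-\boldsymbol{q}_{j_i}^m)\cdot\mathcal{J}\{\sigma_j(h)\}$, which relies on $S^m$ being closed (the extra edge terms cancel pairwise over adjacent triangles), a point you gloss over. One small bonus: your final chain carries the correct sign, $W^{m+1}-W^m\le-\tau\left(\nabla_S\mu^{m+1},\nabla_S\mu^{m+1}\right)_{S^m}\le0$, whereas the paper's last display misprints the right-hand side as $+\tau\left(\nabla_S\mu^{m+1},\nabla_S\mu^{m+1}\right)_{S^m}$.
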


The proof of volume conservation \eqref{dis V prop} is similar to \cite{bao2021structurepreserving} and thus it is omitted here for brevity,
and we will establish the energy dissipation or uncoditional energy stablity 
\eqref{dis W prop} in next section. 


\begin{remark}
The semi-discretization of the variational form \eqref{eqn:New 3Daniso} also preserves the two geometric properties. And the proof is similar to the isotropic case; we refer \cite{barrett2007parametric,Zhao,bao2021structurepreserving} for the result of semi-discretization of 3D isotropic surface diffusion.
\end{remark}

\section{Energy stability}
In this section, we first prove the existence of $k_0(\boldsymbol{n})$ and show its sub-linear property as a functional of $\gamma (\boldsymbol{n})$. By utilizing the existence of $k_0 (\boldsymbol{n})$ together with several lemmas, we finally prove the energy stability part of our main theorem \eqref{dis W prop}.
\subsection{Minimal stabilizing function}

From \eqref{def of k_0}, we know that $F_{k_0} (\boldsymbol{n}, \boldsymbol{u}, \boldsymbol{v})\geq 0$. Take $\boldsymbol{u}=\boldsymbol{n}$ in \eqref{def of F_k}, it yields $k_0 (\boldsymbol{n})-\gamma (\boldsymbol{n})\geq 0$, and we have a finite lower bound for $k_0 (\boldsymbol{n})$. To prove the existence of $k_0 (\boldsymbol{n})$, we only need to show $k_0 (\boldsymbol{n})<\infty$, and this is given by the following upper bound estimation of $k_0 (\boldsymbol{n})$. And we begin with the following lemma.
\begin{lemma}\label{lemma 1}
Let $G (\boldsymbol{n}, \boldsymbol{u}, \boldsymbol{v})$ be an auxillary function given by
\begin{equation}\label{auxillary function G}
	G (\boldsymbol{n}, \boldsymbol{u}, \boldsymbol{v}):=\gamma (\boldsymbol{n})\left (\gamma (\boldsymbol{n})-2 (\boldsymbol{\xi}\cdot \boldsymbol{u}) (\boldsymbol{n}\cdot\boldsymbol{u})-2 (\boldsymbol{\xi}\cdot\boldsymbol{v}) (\boldsymbol{n}\cdot\boldsymbol{v})\right),
\end{equation}
then for any $k (\boldsymbol{n})>0$, the following inequality holds
\begin{equation}\label{F-G}
	F_k (\boldsymbol{n}, \boldsymbol{u}, \boldsymbol{v})-G (\boldsymbol{n}, \boldsymbol{u}, \boldsymbol{v})\geq (\gamma (\boldsymbol{n})k (\boldsymbol{n})-4|\boldsymbol{\xi}|^2)\left ( (\boldsymbol{n}\cdot\boldsymbol{u})^2+ (\boldsymbol{n}\cdot\boldsymbol{v})^2\right).
\end{equation}
\end{lemma}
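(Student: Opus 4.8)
The plan is to reduce the matrix identity to an elementary scalar inequality by expanding the two quadratic forms appearing in $F_k$, and then to find a completion of squares. First I would compute the diagonal form of $\boldsymbol{Z}_k(\boldsymbol{n})$. Abbreviating $\gamma:=\gamma(\boldsymbol{n})$, $k:=k(\boldsymbol{n})$ and introducing the scalars $p:=\boldsymbol{n}\cdot\boldsymbol{u}$, $q:=\boldsymbol{n}\cdot\boldsymbol{v}$, $\alpha:=\boldsymbol{\xi}\cdot\boldsymbol{u}$, $\beta:=\boldsymbol{\xi}\cdot\boldsymbol{v}$, the definition \eqref{def of Z_k} together with $|\boldsymbol{u}|=|\boldsymbol{v}|=1$ gives
\[
\boldsymbol{u}^T\boldsymbol{Z}_k(\boldsymbol{n})\boldsymbol{u}=\gamma-2\alpha p+kp^2,\qquad \boldsymbol{v}^T\boldsymbol{Z}_k(\boldsymbol{n})\boldsymbol{v}=\gamma-2\beta q+kq^2 ,
\]
because the two cross terms $\boldsymbol{n}\boldsymbol{\xi}^T$ and $\boldsymbol{\xi}\boldsymbol{n}^T$ each contribute $-\alpha p$ and the $k\,\boldsymbol{n}\boldsymbol{n}^T$ term contributes $kp^2$. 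Multiplying these out according to \eqref{def of F_k} and subtracting $G$ from \eqref{auxillary function G}, the two leading pieces $\gamma^2-2\gamma(\alpha p+\beta q)$ cancel, leaving
\[
F_k-G=4\alpha\beta\,pq-2k\,pq(\alpha q+\beta p)+k^2p^2q^2+k\gamma(p^2+q^2).
\]

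Next I would subtract the right-hand side of \eqref{F-G}. Since $(\boldsymbol{n}\cdot\boldsymbol{u})^2+(\boldsymbol{n}\cdot\boldsymbol{v})^2=p^2+q^2$, the term $k\gamma(p^2+q^2)$ is exactly annihilated by $-\gamma k(p^2+q^2)$, so the claim \eqref{F-G} is equivalent to showing that
\[
E:=4\alpha\beta\,pq-2k\,pq(\alpha q+\beta p)+k^2p^2q^2+4|\boldsymbol{\xi}|^2(p^2+q^2)\ \ge\ 0 .
\]
The decisive algebraic observation, which I expect to be the one nontrivial step, is that the first three terms of $E$ form a perfect-square remainder: a direct expansion verifies the identity
\[
4\alpha\beta\,pq-2k\,pq(\alpha q+\beta p)+k^2p^2q^2=\bigl(kpq-\alpha q-\beta p\bigr)^2-\bigl(\alpha q-\beta p\bigr)^2 .
\]
Hence $E=\bigl(kpq-\alpha q-\beta p\bigr)^2-\bigl(\alpha q-\beta p\bigr)^2+4|\boldsymbol{\xi}|^2(p^2+q^2)$.

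Finally I would control the negative square by Cauchy--Schwarz. Because $|\boldsymbol{u}|=|\boldsymbol{v}|=1$ we have $\alpha^2=(\boldsymbol{\xi}\cdot\boldsymbol{u})^2\le|\boldsymbol{\xi}|^2$ and $\beta^2\le|\boldsymbol{\xi}|^2$, so
\[
\bigl(\alpha q-\beta p\bigr)^2\le(\alpha^2+\beta^2)(p^2+q^2)\le 2|\boldsymbol{\xi}|^2(p^2+q^2).
\]
Substituting this bound yields $E\ge\bigl(kpq-\alpha q-\beta p\bigr)^2+2|\boldsymbol{\xi}|^2(p^2+q^2)\ge 0$, which is the desired inequality. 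I note that the positivity of $k$ is not actually needed for this estimate; the identity and the final bound hold for every real $k$, so the stated hypothesis $k(\boldsymbol{n})>0$ is only carried along for later use. The whole argument is thus purely scalar once the inner products are named, and the only place requiring insight is spotting the grouping of terms into $\bigl(kpq-\alpha q-\beta p\bigr)^2-\bigl(\alpha q-\beta p\bigr)^2$.
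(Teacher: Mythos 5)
Your proof is correct --- I verified the expansion of $F_k-G$, the perfect-square identity, and the Cauchy--Schwarz step --- and it takes a genuinely different route from the paper's at the decisive point. Writing $p=\boldsymbol{n}\cdot\boldsymbol{u}$, $q=\boldsymbol{n}\cdot\boldsymbol{v}$, $\alpha=\boldsymbol{\xi}\cdot\boldsymbol{u}$, $\beta=\boldsymbol{\xi}\cdot\boldsymbol{v}$, both arguments start from the same expansion
\begin{equation*}
F_k-G=\gamma k(p^2+q^2)+4\alpha\beta\,pq-2k\,pq(\alpha q+\beta p)+k^2p^2q^2,
\end{equation*}
but the paper then replaces the two cross terms by absolute-value bounds and applies a weighted arithmetic--geometric mean inequality with $k$-dependent weights, $2|q|\le \frac{2|\boldsymbol{\xi}|}{k}+\frac{k}{2|\boldsymbol{\xi}|}q^2$ (and symmetrically in $p$), arranged precisely so that the resulting $\tfrac{k^2}{2}p^2q^2$ contributions consume the $k^2p^2q^2$ term; the bound on $4\alpha\beta\,pq$ costs $2|\boldsymbol{\xi}|^2(p^2+q^2)$ and the AM--GM step costs another $2|\boldsymbol{\xi}|^2(p^2+q^2)$, which is exactly where the constant $4|\boldsymbol{\xi}|^2$ in \eqref{F-G} comes from. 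You instead absorb all three $k$-sensitive terms into the exact identity $\bigl(kpq-\alpha q-\beta p\bigr)^2-\bigl(\alpha q-\beta p\bigr)^2$ and only estimate the negative square via $(\alpha q-\beta p)^2\le(\alpha^2+\beta^2)(p^2+q^2)\le 2|\boldsymbol{\xi}|^2(p^2+q^2)$. This buys two things. First, as you note, the hypothesis $k(\boldsymbol{n})>0$ becomes superfluous: your identity and final bound hold for every real $k$, whereas the paper's AM--GM weights genuinely require $k>0$ (they also divide by $|\boldsymbol{\xi}|$, which is harmless since $|\boldsymbol{\xi}|\ge\boldsymbol{\xi}\cdot\boldsymbol{n}=\gamma(\boldsymbol{n})>0$). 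Second, you in fact prove the sharper inequality $F_k-G\ge(\gamma k-2|\boldsymbol{\xi}|^2)(p^2+q^2)+\bigl(kpq-\alpha q-\beta p\bigr)^2$, so the constant $4|\boldsymbol{\xi}|^2$ in \eqref{F-G} could be replaced by $2|\boldsymbol{\xi}|^2$; propagated through Theorem \ref{thm: K}, this would lower the upper bound $K(\boldsymbol{n})$ for the minimal stabilizing function by $2|\boldsymbol{\xi}|^2/\gamma(\boldsymbol{n})$. The paper's version is more routine in that there is no factorization to spot, but it is lossier; since the lemma is only ever invoked with the stated constant, both proofs serve equally well downstream.
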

\begin{proof}By direct computation and the arithmetic-geometric mean inequality, we obtain
\begin{align*}
&F_{k} (\boldsymbol{n}, \boldsymbol{u}, \boldsymbol{v})-G (\boldsymbol{n}, \boldsymbol{u}, \boldsymbol{v})\\
&\geq\gamma (\boldsymbol{n})k (\boldsymbol{n})\left ( (\boldsymbol{n}\cdot\boldsymbol{u})^2+ (\boldsymbol{n}\cdot\boldsymbol{v})^2\right)+k (\boldsymbol{n})^2 (\boldsymbol{n}\cdot\boldsymbol{u})^2 (\boldsymbol{n}\cdot\boldsymbol{v})^2\\
&\quad-4|\boldsymbol{\xi}|^2| (\boldsymbol{n}\cdot\boldsymbol{u}) (\boldsymbol{n}\cdot\boldsymbol{v})|-2|\boldsymbol{\xi}|k (\boldsymbol{n})| (\boldsymbol{n}\cdot\boldsymbol{u}) (\boldsymbol{n}\cdot\boldsymbol{v})| (| (\boldsymbol{n}\cdot\boldsymbol{u})|+| (\boldsymbol{n}\cdot\boldsymbol{v})|)\\
&\geq (\gamma (\boldsymbol{n})k (\boldsymbol{n})-2|\boldsymbol{\xi}|^2)\left ( (\boldsymbol{n}\cdot\boldsymbol{u})^2+ (\boldsymbol{n}\cdot\boldsymbol{v})^2\right)+k (\boldsymbol{n})^2 (\boldsymbol{n}\cdot\boldsymbol{u})^2 (\boldsymbol{n}\cdot\boldsymbol{v})^2\\
&\quad -k (\boldsymbol{n})|\boldsymbol{\xi}|\left ( (\boldsymbol{n}\cdot\boldsymbol{u})^2 (\frac{2|\boldsymbol{\xi}|}{k (\boldsymbol{n})}+\frac{k (\boldsymbol{n})}{2|\boldsymbol{\xi}|} (\boldsymbol{n}\cdot\boldsymbol{v})^2)+ (\boldsymbol{n}\cdot\boldsymbol{v})^2 (\frac{2|\boldsymbol{\xi}|}{k (\boldsymbol{n})}+\frac{k (\boldsymbol{n})}{2|\boldsymbol{\xi}|} (\boldsymbol{n}\cdot\boldsymbol{u})^2)\right)\\
&= (\gamma (\boldsymbol{n})k (\boldsymbol{n})-4|\boldsymbol{\xi}|^2)\left ( (\boldsymbol{n}\cdot\boldsymbol{u})^2+ (\boldsymbol{n}\cdot\boldsymbol{v})^2\right),
\end{align*}
which is the desired inequality.
\end{proof}

Since $\gamma (\boldsymbol{n})$ is not differentiable at $\boldsymbol{0}$, the cross product $\gamma^2 (\boldsymbol{u}\times \boldsymbol{v})\not\in C^2 (\mathbb{S}^2\times \mathbb{S}^2)$. The following lemma is helpful in estimating $\gamma^2 (\boldsymbol{u}\times \boldsymbol{v})$,
\begin{lemma}
	 For any $\gamma(\boldsymbol{n})$ satisfying \eqref{engstabgmp}, i.e., $\gamma (\boldsymbol{n})\in C^2 (\mathbb{S}^2)$ with $\gamma (-\boldsymbol{n})=\gamma (\boldsymbol{n})$, $\gamma^2 (\boldsymbol{p})$ is continuous differentiable in $\mathbb{R}^3$. Moreover, there exists a constant $C_1$ defined by
	 \begin{equation}
	 	C_1=\frac{1}{2}\sup\limits_{\boldsymbol{n}\in\mathbb{S}^2}\left\|{\bf H}_{\gamma^2} (\boldsymbol{n})\right\|_2,
	 \end{equation}such that
	 \begin{equation}\label{est of cross prod}
	 	\gamma^2 (\boldsymbol{p})-\gamma^2 (\boldsymbol{q})\leq \nabla (\gamma^2 (\boldsymbol{q}))\cdot (\boldsymbol{p}-\boldsymbol{q})+C_1|\boldsymbol{p}-\boldsymbol{q}|^2,\qquad\forall \boldsymbol{p},\boldsymbol{q}\in\mathbb{R}^3.
	 \end{equation}
	 Where $\left\|\cdot\right\|_2$ is the spectural norm.
	 \end{lemma}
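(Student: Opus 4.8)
The plan is to prove the two assertions of the lemma separately: first the global $C^1$ regularity of $\gamma^2$ on all of $\mathbb{R}^3$ (including the origin, where $\gamma$ itself fails to be differentiable), and then the one-sided second-order Taylor bound \eqref{est of cross prod}, whose only genuine difficulty is that the Hessian $\mathbf{H}_{\gamma^2}$ may fail to exist on the segment joining $\boldsymbol{q}$ and $\boldsymbol{p}$.

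For the regularity, I would first note that away from the origin $\gamma(\boldsymbol{p})$ is $C^2$ by \eqref{engstabgmp}, so $\gamma^2=\gamma\cdot\gamma$ is $C^2$ on $\mathbb{R}^3\setminus\{\boldsymbol{0}\}$; being the square of a positively one-homogeneous function, $\gamma^2$ is positively two-homogeneous, i.e. $\gamma^2(\lambda\boldsymbol{p})=\lambda^2\gamma^2(\boldsymbol{p})$ for $\lambda>0$. Since $\gamma^2(\boldsymbol{0})=0$ and $0\le\gamma^2(\boldsymbol{p})\le(\max_{\mathbb{S}^2}\gamma^2)\,|\boldsymbol{p}|^2$, the difference quotient $\gamma^2(\boldsymbol{p})/|\boldsymbol{p}|$ tends to $0$, so $\gamma^2$ is differentiable at the origin with $\nabla\gamma^2(\boldsymbol{0})=\boldsymbol{0}$. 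Differentiating the homogeneity relation shows $\nabla\gamma^2$ is positively one-homogeneous, hence $|\nabla\gamma^2(\boldsymbol{p})|\le(\max_{\mathbb{S}^2}|\nabla\gamma^2|)\,|\boldsymbol{p}|\to0$ as $\boldsymbol{p}\to\boldsymbol{0}$; this matches $\nabla\gamma^2(\boldsymbol{0})=\boldsymbol{0}$ and yields continuity of the gradient, so $\gamma^2\in C^1(\mathbb{R}^3)$. The same argument shows $\mathbf{H}_{\gamma^2}$ is zero-homogeneous on $\mathbb{R}^3\setminus\{\boldsymbol{0}\}$, so its spectral norm is constant along rays and its supremum over the compact sphere $\mathbb{S}^2$ is finite; thus $C_1<\infty$.

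For the inequality, I would fix $\boldsymbol{p},\boldsymbol{q}$ and set $h(s):=\gamma^2\bigl(\boldsymbol{q}+s(\boldsymbol{p}-\boldsymbol{q})\bigr)$ on $[0,1]$, which is $C^1$ by the first part, with $h'(s)=\nabla\gamma^2\bigl(\boldsymbol{q}+s(\boldsymbol{p}-\boldsymbol{q})\bigr)\cdot(\boldsymbol{p}-\boldsymbol{q})$. Then \eqref{est of cross prod} is precisely the statement $\int_0^1\bigl(h'(s)-h'(0)\bigr)\,ds\le C_1|\boldsymbol{p}-\boldsymbol{q}|^2$, since the left-hand side equals $\gamma^2(\boldsymbol{p})-\gamma^2(\boldsymbol{q})-\nabla(\gamma^2(\boldsymbol{q}))\cdot(\boldsymbol{p}-\boldsymbol{q})$. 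Wherever the segment avoids the origin, $h$ is $C^2$ with $|h''(s)|=\bigl|(\boldsymbol{p}-\boldsymbol{q})^T\mathbf{H}_{\gamma^2}(\boldsymbol{q}+s(\boldsymbol{p}-\boldsymbol{q}))(\boldsymbol{p}-\boldsymbol{q})\bigr|\le 2C_1|\boldsymbol{p}-\boldsymbol{q}|^2$.

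The hard part is that the segment may pass through the origin, at a single parameter $s_0$, where $\mathbf{H}_{\gamma^2}$ need not exist, so Taylor's theorem with Lagrange remainder cannot be applied directly on $[0,1]$. I would resolve this by observing that $h'$ is continuous on $[0,1]$ and differentiable off at most the one point $s_0$, with the uniform bound $|h''|\le 2C_1|\boldsymbol{p}-\boldsymbol{q}|^2$ wherever defined. By the mean value inequality applied on $[0,s_0]$ and $[s_0,1]$ and glued by continuity, $h'$ is globally Lipschitz on $[0,1]$ with constant $2C_1|\boldsymbol{p}-\boldsymbol{q}|^2$, whence $h'(s)-h'(0)\le 2C_1|\boldsymbol{p}-\boldsymbol{q}|^2\,s$; integrating over $s\in[0,1]$ produces the factor $\tfrac12$ and hence the claimed bound $C_1|\boldsymbol{p}-\boldsymbol{q}|^2$. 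This treats the origin-crossing and the generic configurations simultaneously, so no separate case analysis of the degenerate (anti-parallel) configuration is needed.
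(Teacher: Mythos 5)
Your proof is correct, and it resolves the one genuine difficulty of this lemma --- the segment from $\boldsymbol{q}$ to $\boldsymbol{p}$ possibly passing through the origin, where ${\bf H}_{\gamma^2}$ need not exist --- by a genuinely different mechanism than the paper. The paper argues in two cases: when the segment avoids the origin it applies Taylor's theorem with Lagrange remainder and uses the zero-homogeneity of ${\bf H}_{\gamma^2}$ (so that ${\bf H}_{\gamma^2}(\boldsymbol{\zeta})={\bf H}_{\gamma^2}(\boldsymbol{\zeta}/|\boldsymbol{\zeta}|)$) to bound the quadratic remainder by $C_1|\boldsymbol{p}-\boldsymbol{q}|^2$; when the segment does hit the origin, it perturbs to pairs $(\boldsymbol{p}_k,\boldsymbol{q}_k)$ whose segments avoid the origin and passes to the limit using the continuity of $\gamma^2$ and $\nabla(\gamma^2)$ guaranteed by the first part of the lemma. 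You instead work directly with $h(s)=\gamma^2\bigl(\boldsymbol{q}+s(\boldsymbol{p}-\boldsymbol{q})\bigr)$: the uniform bound $|h''(s)|\le 2C_1|\boldsymbol{p}-\boldsymbol{q}|^2$ away from the (at most one) singular parameter $s_0$, combined with continuity of $h'$ across $s_0$, yields a global Lipschitz bound for $h'$, and integrating the linear bound $h'(s)-h'(0)\le 2C_1|\boldsymbol{p}-\boldsymbol{q}|^2 s$ over $[0,1]$ produces the factor $\tfrac12$ and hence \eqref{est of cross prod}. Both arguments rest on the same two pillars --- zero-homogeneity and boundedness of the Hessian on $\mathbb{S}^2$, and global $C^1$ regularity of $\gamma^2$ --- but yours replaces the approximation-and-limit step by a gluing argument that treats the degenerate and generic configurations uniformly, and it also spells out the $C^1$-at-the-origin verification (via the positive $2$-homogeneity of $\gamma^2$ and $1$-homogeneity of its gradient) that the paper dismisses as ``straightforward to check.'' The trade-off is stylistic rather than substantive: the paper's density argument is shorter to state, while your integral-form remainder is more self-contained and avoids invoking any limiting process.
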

	 \begin{proof}
	It's straitforward to check $\gamma^2 (\boldsymbol{p})$ is continuous differentiable by definition. To prove the inequality \eqref{est of cross prod}, we first consider the case the line segment of $\boldsymbol{p}, \boldsymbol{q}$ does not pass $\boldsymbol{0}$, i.e., $\lambda \boldsymbol{p}+ (1-\lambda)\boldsymbol{q}\neq\boldsymbol{0}, \forall\, 0\leq\lambda\leq 1$. Since $\gamma^2 (\boldsymbol{p})$ is homogeneous of degree $2$, we know that ${\bf H}_{\gamma^2} (\boldsymbol{p})$ is homogeneous of degree $0$, which yields
	 \begin{equation}
	 	{\bf H}_{\gamma^2}(\boldsymbol{\zeta})={\bf H}_{\gamma^2}\left (\boldsymbol{\zeta}/|\boldsymbol{\zeta}|\right),\qquad\forall \boldsymbol{\zeta}\neq\boldsymbol{0}.
	 \end{equation}
	 By mean value theorem, there exists a $\lambda_0\in (0,1)$ and $\boldsymbol{\zeta}=\lambda_0\boldsymbol{p}+ (1-\lambda_0)\boldsymbol{q}\neq \boldsymbol{0}$, such that
	 \begin{equation}
	 	\gamma^2 (\boldsymbol{p})=\gamma^2 (\boldsymbol{q})+\nabla (\gamma^2 (\boldsymbol{q}))\cdot (\boldsymbol{p}-\boldsymbol{q})+\frac{1}{2} (\boldsymbol{p}-\boldsymbol{q})^T{\bf H}_{\gamma^2}(\boldsymbol{\zeta})\cdot (\boldsymbol{p}-\boldsymbol{q}).
	 \end{equation}
	 Thus \eqref{est of cross prod} holds for such $\boldsymbol{p}, \boldsymbol{q}$.

	  If $\boldsymbol{0}$ is contained in line segment of $\boldsymbol{p},\boldsymbol{q}$, we can find a sequence $ (\boldsymbol{p}_k, \boldsymbol{q}_k)\to (\boldsymbol{p}, \boldsymbol{q})$ such that for each $k$, the line segment of $\boldsymbol{p}_k,\boldsymbol{q}_k$ does not pass $\boldsymbol{0}$. We know \eqref{est of cross prod} holds for such $\boldsymbol{p}_k, \boldsymbol{q}_k$. And the \eqref{est of cross prod} is thus valid for all $\boldsymbol{p},\boldsymbol{q}$ by the continuity of $\gamma^2 (\boldsymbol{p})$ and $\nabla (\gamma^2 (\boldsymbol{p}))$.
	 \end{proof}
\begin{theorem}\label{thm: K}Suppose $\gamma(\boldsymbol{n})$ satisfies the energy stable condition \eqref{engstabgmp}. Then there exists a constant $K (\boldsymbol{n})<\infty$ only depends on $\gamma (\boldsymbol{n})$ given by
\begin{equation}\label{def of K}
 	K (\boldsymbol{n})=\frac{6|\boldsymbol{\xi}|^2+8\gamma (\boldsymbol{n})|\boldsymbol{\xi}|+16C_1}{\gamma (\boldsymbol{n})}<\infty,
 \end{equation} such that $F_K (\boldsymbol{n}, \boldsymbol{u}, \boldsymbol{v})\geq \gamma^2 (\boldsymbol{u}\times\boldsymbol{v}), \,\forall \boldsymbol{u}, \boldsymbol{v}\in \mathbb{S}^2$.
\end{theorem}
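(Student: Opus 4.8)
The plan is to prove the pointwise inequality $F_K(\boldsymbol{n},\boldsymbol{u},\boldsymbol{v})\ge\gamma^2(\boldsymbol{u}\times\boldsymbol{v})$ for each fixed $\boldsymbol{n}\in\mathbb{S}^2$ and all $\boldsymbol{u},\boldsymbol{v}\in\mathbb{S}^2$, with $K=K(\boldsymbol{n})$ as in \eqref{def of K}; finiteness of $K(\boldsymbol{n})$ is then immediate since $C_1<\infty$ by the preceding lemma. The first move is the splitting
\[
F_K(\boldsymbol{n},\boldsymbol{u},\boldsymbol{v})-\gamma^2(\boldsymbol{u}\times\boldsymbol{v})=\bigl(F_K(\boldsymbol{n},\boldsymbol{u},\boldsymbol{v})-G(\boldsymbol{n},\boldsymbol{u},\boldsymbol{v})\bigr)+\bigl(G(\boldsymbol{n},\boldsymbol{u},\boldsymbol{v})-\gamma^2(\boldsymbol{u}\times\boldsymbol{v})\bigr),
\]
with $G$ the auxiliary function \eqref{auxillary function G}. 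Writing $a:=\boldsymbol{n}\cdot\boldsymbol{u}$ and $b:=\boldsymbol{n}\cdot\boldsymbol{v}$, the inequality \eqref{F-G} already bounds the first bracket below by $(\gamma(\boldsymbol{n})K(\boldsymbol{n})-4|\boldsymbol{\xi}|^2)(a^2+b^2)$, and for the stated $K$ one has $\gamma K-4|\boldsymbol{\xi}|^2=2|\boldsymbol{\xi}|^2+8\gamma|\boldsymbol{\xi}|+16C_1>0$, a genuine positive quadratic in $(a,b)$. Everything therefore reduces to bounding the second bracket below by $-(\gamma K-4|\boldsymbol{\xi}|^2)(a^2+b^2)$.

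For the second bracket I would use the homogeneity estimate \eqref{est of cross prod} (valid since \eqref{engstabgmp} makes $\gamma^2\in C^1(\mathbb{R}^3)$, which also renders the orientation of $\boldsymbol{u}\times\boldsymbol{v}$ immaterial). Decompose orthogonally $\boldsymbol{u}=a\boldsymbol{n}+\boldsymbol{u}_\perp$, $\boldsymbol{v}=b\boldsymbol{n}+\boldsymbol{v}_\perp$ with $\boldsymbol{u}_\perp,\boldsymbol{v}_\perp\perp\boldsymbol{n}$, so that $\boldsymbol{w}:=\boldsymbol{u}\times\boldsymbol{v}=c\,\boldsymbol{n}+\boldsymbol{w}_\perp$ with $c:=\boldsymbol{w}\cdot\boldsymbol{n}\in[-1,1]$ and $\boldsymbol{w}_\perp=a\,\boldsymbol{n}\times\boldsymbol{v}_\perp-b\,\boldsymbol{n}\times\boldsymbol{u}_\perp\perp\boldsymbol{n}$. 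Applying \eqref{est of cross prod} with $\boldsymbol{p}=\boldsymbol{w}$, $\boldsymbol{q}=c\boldsymbol{n}$ and using $\gamma^2(c\boldsymbol{n})=c^2\gamma^2$, $\nabla(\gamma^2)(c\boldsymbol{n})=2c\gamma\,\boldsymbol{\xi}$ and the Euler relation $\boldsymbol{\xi}\cdot\boldsymbol{n}=\gamma$ gives $\gamma^2(\boldsymbol{w})\le c^2\gamma^2+2c\gamma\,\boldsymbol{\xi}\cdot\boldsymbol{w}_\perp+C_1|\boldsymbol{w}_\perp|^2$. Substituting $G=\gamma^2-2\gamma[(\boldsymbol{\xi}\cdot\boldsymbol{u})a+(\boldsymbol{\xi}\cdot\boldsymbol{v})b]$ and expanding $\boldsymbol{\xi}\cdot\boldsymbol{u}=a\gamma+\boldsymbol{\xi}\cdot\boldsymbol{u}_\perp$ (similarly for $\boldsymbol{v}$), the difference becomes
\[
G-\gamma^2(\boldsymbol{w})\ \ge\ \gamma^2(1-c^2)-2\gamma^2(a^2+b^2)-2\gamma\,(a\,s_1+b\,s_2)-C_1|\boldsymbol{w}_\perp|^2,
\]
where $s_1=\boldsymbol{\xi}\cdot(\boldsymbol{u}_\perp+c\,\boldsymbol{n}\times\boldsymbol{v}_\perp)$ and $s_2=\boldsymbol{\xi}\cdot(\boldsymbol{v}_\perp-c\,\boldsymbol{n}\times\boldsymbol{u}_\perp)$.

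The crux, and the step I expect to be the main obstacle, is that the cross terms $a\,s_1+b\,s_2$ are only \emph{first order} in $(a,b)$, so they cannot be absorbed by the quadratic terms; they must be balanced against the nonnegative slack $\gamma^2(1-c^2)$, which itself degenerates exactly at the tight configuration $\boldsymbol{u}\times\boldsymbol{v}=\pm\boldsymbol{n}$ (orthonormal $\boldsymbol{u},\boldsymbol{v},\boldsymbol{n}$). The key geometric estimates I would establish are $|\boldsymbol{w}_\perp|^2\le 2(a^2+b^2)$ and the identity $|\boldsymbol{u}_\perp+c\,\boldsymbol{n}\times\boldsymbol{v}_\perp|^2=1-a^2-c^2(1+b^2)\le 1-c^2$ together with its $\boldsymbol{u}\leftrightarrow\boldsymbol{v}$ counterpart, which yield $|s_1|,|s_2|\le|\boldsymbol{\xi}|\sqrt{1-c^2}$; that is, $s_1,s_2$ vanish at the same rate as the slack. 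A Young inequality $2\gamma|\boldsymbol{\xi}|\sqrt{1-c^2}\,|a|\le\alpha\gamma^2(1-c^2)+\frac{|\boldsymbol{\xi}|^2}{\alpha}a^2$ (taking $\alpha=\frac12$) then absorbs $2\gamma(a\,s_1+b\,s_2)$ entirely into $\gamma^2(1-c^2)$ at the cost of $2|\boldsymbol{\xi}|^2(a^2+b^2)$. Collecting everything and using $C_1|\boldsymbol{w}_\perp|^2\le 2C_1(a^2+b^2)$ gives $G-\gamma^2(\boldsymbol{w})\ge-(2\gamma^2+2|\boldsymbol{\xi}|^2+2C_1)(a^2+b^2)$; since $\gamma\le|\boldsymbol{\xi}|$ (Euler plus Cauchy--Schwarz) one checks $\gamma K-4|\boldsymbol{\xi}|^2=2|\boldsymbol{\xi}|^2+8\gamma|\boldsymbol{\xi}|+16C_1\ge 2\gamma^2+2|\boldsymbol{\xi}|^2+2C_1$, so adding the two brackets yields $F_K-\gamma^2(\boldsymbol{u}\times\boldsymbol{v})\ge0$, completing the proof. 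I expect verifying the identity $|s_i|^2\le 1-c^2$ — equivalently, that $s_1,s_2$ vanish precisely on the tight configuration — to be the delicate point, everything else being homogeneity bookkeeping and elementary inequalities.
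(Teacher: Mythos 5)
Your proposal is correct: every step checks out, including the two computations you flag as delicate. The triple-product identity gives $\boldsymbol{u}_\perp\cdot(\boldsymbol{n}\times\boldsymbol{v}_\perp)=-c$ and $\boldsymbol{v}_\perp\cdot(\boldsymbol{n}\times\boldsymbol{u}_\perp)=c$, so indeed $|\boldsymbol{u}_\perp+c\,\boldsymbol{n}\times\boldsymbol{v}_\perp|^2=1-a^2-c^2(1+b^2)\le 1-c^2$ and likewise for the partner term, whence $|s_1|,|s_2|\le|\boldsymbol{\xi}|\sqrt{1-c^2}$; the final comparison $8\gamma|\boldsymbol{\xi}|+14C_1\ge 2\gamma^2$ follows from $\gamma=\boldsymbol{\xi}\cdot\boldsymbol{n}\le|\boldsymbol{\xi}|$ and $C_1\ge0$, so the stated $K(\boldsymbol{n})$ suffices (with room to spare). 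Your route shares the paper's skeleton — the same splitting through $G$ in \eqref{auxillary function G}, Lemma \ref{lemma 1} for the bracket $F_K-G$ via \eqref{F-G}, and the Taylor-type bound \eqref{est of cross prod} expanded about a point on the $\boldsymbol{n}$-axis — but the execution of the second bracket is genuinely different. The paper reduces WLOG to $\boldsymbol{n}=(0,0,1)^T$, works in spherical coordinates, expands about $\boldsymbol{q}=\boldsymbol{u}_0\times\boldsymbol{v}_0=(\sin\theta_{2,1})\boldsymbol{n}$ where $\boldsymbol{u}_0,\boldsymbol{v}_0$ are the \emph{normalized} equatorial projections, which forces auxiliary estimates ($|\boldsymbol{u}\times\boldsymbol{v}-\boldsymbol{u}_0\times\boldsymbol{v}_0|^2\le 8(|\boldsymbol{u}-\boldsymbol{u}_0|^2+|\boldsymbol{v}-\boldsymbol{v}_0|^2)$, half-angle identities, and the comparison \eqref{est of G} of $G$ with $G(\boldsymbol{n},\boldsymbol{u}_0,\boldsymbol{v}_0)$), and then closes with a quadratic-form positivity argument in $\cos\theta_{2,1}$, $\boldsymbol{n}\cdot\boldsymbol{u}$, $\boldsymbol{n}\cdot\boldsymbol{v}$. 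You instead stay coordinate-free, expand about the \emph{exact} orthogonal projection $c\boldsymbol{n}$ of $\boldsymbol{u}\times\boldsymbol{v}$ (note $c=\cos\phi_1\cos\phi_2\sin\theta_{2,1}$, a different reference point from the paper's), prove an exact identity showing the first-order coefficients $s_1,s_2$ vanish at precisely the rate $\sqrt{1-c^2}$ of the slack, and absorb them by Young's inequality. What your version buys is transparency at the tight configuration ($\boldsymbol{u},\boldsymbol{v}\perp\boldsymbol{n}$, where the slack degenerates): the cancellation is structural rather than the outcome of a trigonometric bookkeeping, and no WLOG rotation or coordinate chart is needed; what the paper's version buys is that all quantities are explicit trigonometric functions, which makes the individual estimates elementary to verify term by term.
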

\begin{proof}
It is convenient to consider the special case $\boldsymbol{n}= (0, 0, 1)^T$. We write $\boldsymbol{u}, \boldsymbol{v}$ in the spherical coordinates,
\begin{subequations}
\label{eqn:sphi coord}
\begin{align}
\label{eqn:sphi coord u}
&\boldsymbol{u}= (\cos\phi_1\cos\theta_1, \cos\phi_1\sin\theta_1, \sin\phi_1)^T,\\[0.5em]
\label{eqn:sphi coord v}
&\boldsymbol{v}= (\cos\phi_2\cos\theta_2, \cos\phi_2\sin\theta_2, \sin\phi_2)^T,
\end{align}
\end{subequations}
where $-\frac{\pi}{2}\leq \phi_1,\phi_2\leq \frac{\pi}{2},\, 0\leq \theta_1, \theta_2<2\pi$. And in case $\phi=\pm \frac{\pi}{2}$, we choose $\theta=0$. The cross product $\boldsymbol{u}\times \boldsymbol{v}$ is then represented as
\begin{equation}
 	\label{cross of u and v}
	\boldsymbol{u}\times \boldsymbol{v}=\cos\phi_2\sin\phi_1\hat{\boldsymbol{v}}_0+\cos\phi_1\sin\phi_2\hat{\boldsymbol{u}}_0+\cos\phi_1\cos\phi_2(0,0,\sin\theta_{2,1})^T,
 \end{equation}
 where
 \begin{equation}
 	\hat{\boldsymbol{u}}_0=(\sin\theta_1, -\cos\theta_1, 0)^T,\,\,\hat{\boldsymbol{v}}_0=(-\sin\theta_2,\cos\theta_2,0)^T,\,\, \theta_{2,1}=\theta_2-\theta_1.
 \end{equation}
And denote $\boldsymbol{u}_0, \boldsymbol{v}_0$ as
\begin{equation}
\label{eqn:sphi coord2}
\boldsymbol{u}_0:= (\cos\theta_1, \sin\theta_1, 0)^T,\quad \boldsymbol{v}_0:= (\cos\theta_2, \sin\theta_2, 0)^T.
\end{equation}

Since $|\boldsymbol{u}|, |\boldsymbol{v}|, |\boldsymbol{u}_0|, |\boldsymbol{v}_0|=1$, we know that $| (\boldsymbol{u}-\boldsymbol{u}_0)\times \boldsymbol{v}|\leq |\boldsymbol{u}-\boldsymbol{u}_0|, |\boldsymbol{u}\times (\boldsymbol{v}-\boldsymbol{v}_0)|\leq |\boldsymbol{v}-\boldsymbol{v}_0|$, $| (\boldsymbol{u}-\boldsymbol{u}_0)\times (\boldsymbol{v}-\boldsymbol{v}_0)|\leq |\boldsymbol{u}-\boldsymbol{u}_0|+|\boldsymbol{v}-\boldsymbol{v}_0|$, and thus
\begin{equation}
 	|\boldsymbol{u}\times \boldsymbol{v}-\boldsymbol{u}_0\times\boldsymbol{v}_0|^2\leq 8 (|\boldsymbol{u}-\boldsymbol{u}_0|^2+|\boldsymbol{v}-\boldsymbol{v}_0|^2).
 \end{equation} By taking $\boldsymbol{p}=\boldsymbol{u}\times\boldsymbol{v}, \boldsymbol{q}=\boldsymbol{u}_0\times \boldsymbol{v}_0$ in \eqref{est of cross prod}, and noticing $\boldsymbol{u}_0\times \boldsymbol{v}_0=(\sin\theta_{2,1})\, \boldsymbol{n}$, we obtain
\begin{align}\label{ineq: est for cross prod}
&\gamma^2 (\boldsymbol{u}\times\boldsymbol{v})- (\sin\Delta \theta)^2\gamma^2 (\boldsymbol{n})\nonumber\\
&\leq\sin\theta_{2,1}\nabla (\gamma^2 ( \boldsymbol{n}))\cdot (\boldsymbol{u}\times\boldsymbol{v}-\boldsymbol{u}_0\times \boldsymbol{v}_0)+C_1|\boldsymbol{u}\times\boldsymbol{v}-\boldsymbol{u}_0\times \boldsymbol{v}_0|^2\nonumber\\
&\leq 2\gamma (\boldsymbol{n})\boldsymbol{\xi}\cdot\left (\sin\phi_1\,\hat{\boldsymbol{v}}_0+\sin\phi_2\,\hat{\boldsymbol{u}}_0\right)\sin\theta_{2,1}\nonumber\\
&\qquad +4\gamma (\boldsymbol{n})|\boldsymbol{\xi}| ( (\sin\phi_1)^2+ (\sin\phi_2)^2)+8C_1 (|\boldsymbol{u}-\boldsymbol{u}_0|^2+|\boldsymbol{v}-\boldsymbol{v}_0|^2)\nonumber\\
&\leq 2\gamma (\boldsymbol{n})\boldsymbol{\xi}\cdot\left ( (\cos\theta_{2,1}\, \boldsymbol{v}_0-\boldsymbol{u}_0)\, (\boldsymbol{n}\cdot\boldsymbol{u})+ (\cos\theta_{2,1}\, \boldsymbol{u}_0-\boldsymbol{v}_0)\, (\boldsymbol{n}\cdot\boldsymbol{v})\right)\\
&\qquad +4 (\gamma (\boldsymbol{n})|\boldsymbol{\xi}|+4C_1) ( (\boldsymbol{n}\cdot\boldsymbol{u})^2+ (\boldsymbol{n}\cdot\boldsymbol{v})^2).\nonumber
\end{align}
Where we use the facts $|\boldsymbol{u}-\boldsymbol{u}_0|=2|\sin\frac{\phi_1}{2}|, |\boldsymbol{v}-\boldsymbol{v}_0|=2|\sin\frac{\phi_2}{2}|$ and $ (\sin\phi)^2\geq 2 (\sin\frac{\phi}{2})^2=1-\cos\phi ,\,\, \forall -\frac{\pi}{2}\leq \phi\leq \frac{\pi}{2}$.

To estimate $G (\boldsymbol{n}, \boldsymbol{u}, \boldsymbol{v})$, we observe the following inequalities
\begin{subequations}
\label{eqn:est for G}
\begin{align}
 (\boldsymbol{\xi}\cdot\boldsymbol{u}) (\boldsymbol{n}\cdot\boldsymbol{u})&= (\boldsymbol{\xi}\cdot\boldsymbol{u}_0) (\boldsymbol{n}\cdot\boldsymbol{u})+ (\boldsymbol{\xi}\cdot (\boldsymbol{u}-\boldsymbol{u}_0)) (\boldsymbol{n}\cdot (\boldsymbol{u}-\boldsymbol{u}_0))\nonumber\\
&\geq (\boldsymbol{\xi}\cdot\boldsymbol{u}_0) (\boldsymbol{n}\cdot\boldsymbol{u})-2|\boldsymbol{\xi}| (\boldsymbol{n}\cdot\boldsymbol{u})^2,\\[1em]
\label{eqn:est for G2}
 (\boldsymbol{\xi}\cdot\boldsymbol{v}) (\boldsymbol{n}\cdot\boldsymbol{v})&\geq (\boldsymbol{\xi}\cdot\boldsymbol{v}_0) (\boldsymbol{n}\cdot\boldsymbol{v})-2|\boldsymbol{\xi}| (\boldsymbol{n}\cdot\boldsymbol{v})^2.
\end{align}
\end{subequations}
Combining \eqref{auxillary function G} and \eqref{eqn:est for G} yields
\begin{equation}\label{est of G}
	G (\boldsymbol{n}, \boldsymbol{u}, \boldsymbol{v})\geq G (\boldsymbol{n}, \boldsymbol{u}_0, \boldsymbol{v}_0)-4\gamma (\boldsymbol{n})|\boldsymbol{\xi}| ( (\boldsymbol{n}\cdot\boldsymbol{u})^2+ (\boldsymbol{n}\cdot\boldsymbol{v})^2).
\end{equation}

Finally, by \eqref{F-G} in lemma \ref{lemma 1}, the estimation of $\gamma^2 (\boldsymbol{u}\times \boldsymbol{v})$ \eqref{ineq: est for cross prod}, and the estimation of $G (\boldsymbol{n}, \boldsymbol{u}, \boldsymbol{v})$ \eqref{est of G}, we obtain
\begin{align*}
&F_K (\boldsymbol{n}, \boldsymbol{u}, \boldsymbol{v})-\gamma^2 (\boldsymbol{u}\times \boldsymbol{v})\\
&\geq \gamma (\boldsymbol{n})^2 (\cos\theta_{2,1})^2-2\gamma (\boldsymbol{n})\cos\theta_{2,1}\,\left ( (\boldsymbol{\xi}\cdot\boldsymbol{v}_0) (\boldsymbol{n}\cdot\boldsymbol{u})+ (\boldsymbol{\xi}\cdot\boldsymbol{u}_0) (\boldsymbol{n}\cdot\boldsymbol{v})\right)\\
&\qquad + (\gamma (\boldsymbol{n})K (\boldsymbol{n})-4|\boldsymbol{\xi}|^2-8\gamma (\boldsymbol{n})|\boldsymbol{\xi}|-16C_1) ( (\boldsymbol{n}\cdot\boldsymbol{u})^2+ (\boldsymbol{n}\cdot\boldsymbol{v})^2)\\
&\geq \gamma (\boldsymbol{n})^2 (\cos\theta_{2,1})^2-2\gamma (\boldsymbol{n})|\cos\theta_{2,1}|\,|\boldsymbol{\xi}|\left (  (\boldsymbol{n}\cdot\boldsymbol{u})+ (\boldsymbol{n}\cdot\boldsymbol{v})\right)\\
&\qquad + 2|\boldsymbol{\xi}|^2( (\boldsymbol{n}\cdot\boldsymbol{u})^2+ (\boldsymbol{n}\cdot\boldsymbol{v})^2)\\
&\geq 0.
\end{align*}
Thus we have $F_K(\boldsymbol{n}, \boldsymbol{u}, \boldsymbol{v})\geq \gamma^2 (\boldsymbol{u}\times \boldsymbol{v})$ for $\boldsymbol{n}= (0, 0, 1)^T$. And since the constant $K (\boldsymbol{n})$ only depends on $\gamma (\boldsymbol{n})$, the proof is valid for arbitrary $\boldsymbol{n}\in \mathbb{S}^2$.
\end{proof}

Theorem \ref{thm: K} indicates that the set $\Big\{k (\boldsymbol{n})\Big|F_k (\boldsymbol{n}, \boldsymbol{u},\boldsymbol{v})\geq \gamma^2 (\boldsymbol{u}\times\boldsymbol{v}),\quad \forall \boldsymbol{u},\boldsymbol{v}\in \mathbb{S}^2\Big\}$ contains an element $K (\boldsymbol{n})<\infty$, and thus is not empty. Together with the fact $k_0 (\boldsymbol{n})\geq \gamma (\boldsymbol{n})$ yields the existence of $k_0 (\boldsymbol{n})$.

\begin{corollary}[existence of the minimal stabilizing function]
Suppose $\gamma (\boldsymbol{n})\in C^2$ with $\gamma (\boldsymbol{n})=\gamma (-\boldsymbol{n})$. Then the minimal stabilizing function $k_0 (\boldsymbol{n})$ in \eqref{def of k_0} is well-defined.
\end{corollary}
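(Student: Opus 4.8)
The plan is to read \eqref{def of k_0} as an infimum of a subset of $\mathbb{R}$ and to show that, for each fixed $\boldsymbol{n}\in\mathbb{S}^2$, this subset is non-empty and bounded below, so that the infimum is a genuine finite real number. Write $\mathcal{A}(\boldsymbol{n}):=\{k(\boldsymbol{n})\mid F_k(\boldsymbol{n},\boldsymbol{u},\boldsymbol{v})\ge\gamma^2(\boldsymbol{u}\times\boldsymbol{v}),\ \forall\,\boldsymbol{u},\boldsymbol{v}\in\mathbb{S}^2\}$ for the admissible set appearing in \eqref{def of k_0}; then $k_0(\boldsymbol{n})=\inf\mathcal{A}(\boldsymbol{n})$, and the whole statement reduces to checking $\mathcal{A}(\boldsymbol{n})\neq\emptyset$ and $\inf\mathcal{A}(\boldsymbol{n})>-\infty$.

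First I would settle non-emptiness, which is precisely the output of Theorem \ref{thm: K}. Under the standing hypothesis \eqref{engstabgmp} (that is, $\gamma\in C^2$ with $\gamma(-\boldsymbol{n})=\gamma(\boldsymbol{n})$), Theorem \ref{thm: K} exhibits the explicit value $K(\boldsymbol{n})$ of \eqref{def of K} satisfying $F_K(\boldsymbol{n},\boldsymbol{u},\boldsymbol{v})\ge\gamma^2(\boldsymbol{u}\times\boldsymbol{v})$ for all $\boldsymbol{u},\boldsymbol{v}\in\mathbb{S}^2$, together with $K(\boldsymbol{n})<\infty$. Hence $K(\boldsymbol{n})\in\mathcal{A}(\boldsymbol{n})$, so $\mathcal{A}(\boldsymbol{n})$ is non-empty and we already obtain the upper bound $k_0(\boldsymbol{n})\le K(\boldsymbol{n})<\infty$.

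Next I would produce the finite lower bound by testing the defining inequality at a convenient pair. Fix any $k\in\mathcal{A}(\boldsymbol{n})$, pick a unit tangent $\boldsymbol{\tau}$ with $\boldsymbol{\tau}\cdot\boldsymbol{n}=0$, and evaluate $F_k(\boldsymbol{n},\boldsymbol{n},\boldsymbol{\tau})$. From \eqref{def of Z_k}, using $|\boldsymbol{n}|=1$ and the Euler identity $\boldsymbol{\xi}\cdot\boldsymbol{n}=\gamma(\boldsymbol{n})$ (homogeneity of degree one of $\gamma$), a one-line computation gives $\boldsymbol{n}^T\boldsymbol{Z}_k(\boldsymbol{n})\boldsymbol{n}=k(\boldsymbol{n})-\gamma(\boldsymbol{n})$ and $\boldsymbol{\tau}^T\boldsymbol{Z}_k(\boldsymbol{n})\boldsymbol{\tau}=\gamma(\boldsymbol{n})$, so \eqref{def of F_k} yields $F_k(\boldsymbol{n},\boldsymbol{n},\boldsymbol{\tau})=(k(\boldsymbol{n})-\gamma(\boldsymbol{n}))\,\gamma(\boldsymbol{n})$. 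Since $\boldsymbol{n}\times\boldsymbol{\tau}$ is a unit vector, admissibility forces $(k(\boldsymbol{n})-\gamma(\boldsymbol{n}))\,\gamma(\boldsymbol{n})\ge\gamma^2(\boldsymbol{n}\times\boldsymbol{\tau})\ge0$, and dividing by $\gamma(\boldsymbol{n})>0$ gives $k(\boldsymbol{n})\ge\gamma(\boldsymbol{n})$. As this holds for every $k\in\mathcal{A}(\boldsymbol{n})$, we conclude $k_0(\boldsymbol{n})=\inf\mathcal{A}(\boldsymbol{n})\ge\gamma(\boldsymbol{n})>-\infty$.

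Combining the two estimates yields $\gamma(\boldsymbol{n})\le k_0(\boldsymbol{n})\le K(\boldsymbol{n})<\infty$, so the infimum in \eqref{def of k_0} is taken over a non-empty, bounded-below set and $k_0(\boldsymbol{n})$ is a well-defined finite function of $\boldsymbol{n}\in\mathbb{S}^2$. I expect essentially no obstacle here: the genuinely hard quantitative estimate, namely the upper bound $\gamma^2(\boldsymbol{u}\times\boldsymbol{v})\le F_K(\boldsymbol{n},\boldsymbol{u},\boldsymbol{v})$, has already been delivered by Theorem \ref{thm: K}, so this corollary is a bookkeeping consequence of the two bounds. The only point I would state with care is that $\gamma>0$ on $\mathbb{S}^2$, which is what both makes $\boldsymbol{\tau}^T\boldsymbol{Z}_k(\boldsymbol{n})\boldsymbol{\tau}=\gamma(\boldsymbol{n})$ strictly positive and legitimizes dividing the lower-bound inequality through by $\gamma(\boldsymbol{n})$.
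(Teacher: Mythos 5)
Your proof is correct and takes essentially the same route as the paper: non-emptiness of the admissible set is obtained from Theorem \ref{thm: K}, and the finite lower bound $k_0(\boldsymbol{n})\ge\gamma(\boldsymbol{n})$ is obtained by testing the admissibility inequality with $\boldsymbol{u}=\boldsymbol{n}$, which is exactly the paper's argument preceding the corollary. Your version is in fact slightly more careful than the paper's sketch, since you specify the tangent test vector $\boldsymbol{\tau}$, compute $F_k(\boldsymbol{n},\boldsymbol{n},\boldsymbol{\tau})=(k(\boldsymbol{n})-\gamma(\boldsymbol{n}))\gamma(\boldsymbol{n})$ explicitly, and note that $\gamma>0$ justifies the division.
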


Finally, we point out the minimal stabilizing function $k_0 (\boldsymbol{n})$ is determined by $\gamma (\boldsymbol{n})$. And similar to the 2D result in \cite{bao2021symmetrized}, this map is sub-linear.
\begin{theorem}[positive homogeneity and subadditivity]Suppose $\gamma (\boldsymbol{n})=\gamma_1 (\boldsymbol{n})+\gamma_2 (\boldsymbol{n})$. Let $k_0 (\boldsymbol{n}), k_1 (\boldsymbol{n}), k_2 (\boldsymbol{n})$ be the minimal stabilizing function of $\gamma (\boldsymbol{n}), \gamma_1 (\boldsymbol{n}), \gamma_2 (\boldsymbol{n})$, respectively, we know that
\begin{itemize}
	\item $\forall c>0$, $ck_0 (\boldsymbol{n})$ is the stabilizing function of $c\gamma (\boldsymbol{n})$;
	\item $k_0 (\boldsymbol{n})\leq k_1 (\boldsymbol{n})+k_2 (\boldsymbol{n})$.
\end{itemize}
\end{theorem}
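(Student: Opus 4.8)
The plan is to treat the two assertions separately, in each case by exhibiting an admissible stabilizing function for $\gamma$ and then invoking the minimality (infimum) in the definition \eqref{def of k_0}. The workhorse for both is the observation that $\boldsymbol{\xi}$, and hence the surface energy matrix $\boldsymbol{Z}_k(\boldsymbol{n})$ in \eqref{def of Z_k}, depends \emph{linearly} on $\gamma$: since the homogeneous extension \eqref{gamma p} is linear in $\gamma$ and $\boldsymbol{\xi}=\nabla\gamma(\boldsymbol{p})|_{\boldsymbol{p}=\boldsymbol{n}}$ is a gradient, one has $\boldsymbol{\xi}_{c\gamma}=c\,\boldsymbol{\xi}$ and $\boldsymbol{\xi}_{\gamma_1+\gamma_2}=\boldsymbol{\xi}_1+\boldsymbol{\xi}_2$, where $\boldsymbol{\xi}_i=\nabla\gamma_i(\boldsymbol{p})|_{\boldsymbol{p}=\boldsymbol{n}}$.

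For positive homogeneity, fix $c>0$ and write $\boldsymbol{Z}_{\tilde k}^{c\gamma}$ for the matrix built from $c\gamma$ with stabilizing function $\tilde k$. The linearity of $\boldsymbol{\xi}$ gives $\boldsymbol{Z}_{ck}^{c\gamma}(\boldsymbol{n})=c\,\boldsymbol{Z}_k(\boldsymbol{n})$ for every $k$, so the corresponding auxiliary function obeys $F_{ck}^{c\gamma}=c^2F_k$, while the right-hand side of the admissibility inequality scales as $(c\gamma)^2(\boldsymbol{u}\times\boldsymbol{v})=c^2\gamma^2(\boldsymbol{u}\times\boldsymbol{v})$. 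The common factor $c^2>0$ cancels, so $ck$ is admissible for $c\gamma$ exactly when $k$ is admissible for $\gamma$; taking the infimum over admissible functions identifies $ck_0$ as the minimal stabilizing function of $c\gamma$, which is the first bullet.

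For subadditivity, let $\boldsymbol{Z}^{(i)}:=\gamma_i(\boldsymbol{n})I_3-\boldsymbol{n}\boldsymbol{\xi}_i^T-\boldsymbol{\xi}_i\boldsymbol{n}^T+k_i(\boldsymbol{n})\boldsymbol{n}\boldsymbol{n}^T$ be the matrix attached to $\gamma_i$ with its minimal stabilizing function $k_i$; by the role of the stabilizing function each $\boldsymbol{Z}^{(i)}$ is symmetric positive definite. Using $\gamma=\gamma_1+\gamma_2$ together with $\boldsymbol{\xi}=\boldsymbol{\xi}_1+\boldsymbol{\xi}_2$, the matrix for $\gamma$ with stabilizing function $k_1+k_2$ splits additively as $\boldsymbol{Z}_{k_1+k_2}(\boldsymbol{n})=\boldsymbol{Z}^{(1)}+\boldsymbol{Z}^{(2)}$. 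I would then abbreviate $a_i:=\boldsymbol{u}^T\boldsymbol{Z}^{(i)}\boldsymbol{u}\ge0$, $b_i:=\boldsymbol{v}^T\boldsymbol{Z}^{(i)}\boldsymbol{v}\ge0$ and $g_i:=\gamma_i(\boldsymbol{u}\times\boldsymbol{v})\ge0$, so that the admissibility of $k_i$ for $\gamma_i$ reads $a_ib_i=F_{k_i}^{\gamma_i}\ge g_i^2$, and the desired admissibility of $k_1+k_2$ for $\gamma$ becomes $(a_1+a_2)(b_1+b_2)\ge(g_1+g_2)^2$.

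The remaining step is the elementary bound $(a_1+a_2)(b_1+b_2)\ge(\sqrt{a_1b_1}+\sqrt{a_2b_2})^2$, which is Cauchy--Schwarz since the difference of the two sides equals $(\sqrt{a_1b_2}-\sqrt{a_2b_1})^2\ge0$; combined with $\sqrt{a_ib_i}\ge g_i$ this yields $\sqrt{(a_1+a_2)(b_1+b_2)}\ge g_1+g_2=\gamma(\boldsymbol{u}\times\boldsymbol{v})$, i.e. $F_{k_1+k_2}^{\gamma}\ge\gamma^2(\boldsymbol{u}\times\boldsymbol{v})$ for all $\boldsymbol{u},\boldsymbol{v}\in\mathbb{S}^2$. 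Hence $k_1+k_2$ is admissible for $\gamma$, and the infimum in \eqref{def of k_0} gives $k_0\le k_1+k_2$. I expect the only genuinely delicate points to be bookkeeping ones: confirming the linear dependence of $\boldsymbol{\xi}$ on $\gamma$ (and thus the homogeneous/additive splitting of $\boldsymbol{Z}_k$), and using positive (semi)definiteness of each $\boldsymbol{Z}^{(i)}$ to guarantee $a_i,b_i\ge0$ so that the AM--GM/Cauchy--Schwarz step is legitimate; once these are in place both conclusions follow at once.
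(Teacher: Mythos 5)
Your proof is correct and takes essentially the same route as the paper's: both exploit the additivity $\boldsymbol{\xi}=\boldsymbol{\xi}_1+\boldsymbol{\xi}_2$ to split $\boldsymbol{Z}_{k_1+k_2}=\boldsymbol{Z}^{(1)}+\boldsymbol{Z}^{(2)}$, verify that $k_1+k_2$ is admissible for $\gamma$, and conclude from the infimum in \eqref{def of k_0} (with the homogeneity bullet handled by the same scaling argument the paper defers to its 2D reference). The only cosmetic difference is that you prove $(a_1+a_2)(b_1+b_2)\ge(g_1+g_2)^2$ by a direct Cauchy--Schwarz/AM--GM step, whereas the paper obtains the identical inequality as the non-negativity of the discriminant of a quadratic in an auxiliary parameter $t$; both versions tacitly use $a_i,b_i\ge 0$, i.e.\ positive semi-definiteness of $\boldsymbol{Z}^{(i)}$, and both need the infimum defining $k_i$ to be attained so that $F_{k_i}\ge\gamma_i^2$.
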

\begin{proof}
The proof of positive homogeneity is similar to the proof of lemma 4.4 in \cite{bao2021symmetrized}, thus we only prove subadditivity. Here we use $\boldsymbol{\xi}, \boldsymbol{\xi}_1, \boldsymbol{\xi}_2$ to denote the $\boldsymbol{\xi}$ vector for $\gamma (\boldsymbol{n}), \gamma_1 (\boldsymbol{n}), \gamma_2 (\boldsymbol{n})$, respectively.

Since $k_1 (\boldsymbol{n})$ is the minimal stabilizing function of $\gamma_1 (\boldsymbol{n})$. For any $t\in \mathbb{R}$, we have
\begin{align}
&\frac{1}{2}\boldsymbol{u}^T\boldsymbol{Z}_{k_1} (\boldsymbol{n})\boldsymbol{u}+\frac{t^2}{2}\boldsymbol{v}^T\boldsymbol{Z}_{k_1}\boldsymbol{v}-t\gamma_1 (\boldsymbol{u}\times\boldsymbol{v})\nonumber\\
&\geq 2\sqrt{\frac{t^2}{4}F_{k_1} (\boldsymbol{n}, \boldsymbol{u}, \boldsymbol{v})}-t\gamma_1 (\boldsymbol{u}\times\boldsymbol{v})\nonumber\\
&\geq 0.
\end{align}
And this inequality is also true for $\gamma_2 (\boldsymbol{n})$. Add the two inequalities together, and noticing $\boldsymbol{\xi}=\boldsymbol{\xi_1}+\boldsymbol{\xi_2}$, it yields that
\begin{align}
\frac{1}{2}\boldsymbol{u}^T\boldsymbol{Z}_{k_1+k_2} (\boldsymbol{n})\boldsymbol{u}+\frac{t^2}{2}\boldsymbol{v}^T\boldsymbol{Z}_{k_1+k_2}\boldsymbol{v}-t\gamma (\boldsymbol{u}\times\boldsymbol{v})\geq 0, \qquad\forall t\in \mathbb{R},
\end{align}
which means its discriminant $\gamma^2 (\boldsymbol{u}\times \boldsymbol{v})-F_{k_1+k_2} (\boldsymbol{n}, \boldsymbol{u}, \boldsymbol{v})\leq 0$. And the subadditivity is a direct conclusion from the definition of minimal stabilizing function \eqref{def of k_0}.
\end{proof}
\subsection{Proof of the main theorem}
By establishing the existence of $k_0 (\boldsymbol{n})$, we now have enough tools to prove \eqref{dis W prop} in theorem \ref{thm: main}. To simplify the proof, we first introduce the following alternative definition for the surface gradient operator $\nabla_S$.

\begin{lemma}Suppose $\sigma$ is a non-degenerated triangle with three vertices $\{\boldsymbol{q}_1, \boldsymbol{q}_2, \boldsymbol{q}_3\}$ ordered counterclockwise.  Let $f/\boldsymbol{F}$ be a scalar-/vector-valued function in $\mathcal{P}^1 (\sigma)/[\mathcal{P}^1 (\sigma)]^3$, respectively, $\{\boldsymbol{n},\boldsymbol{\tau}_1, \boldsymbol{\tau}_2\}$ forms an orthonormal basis. Then the discretized surface gradient operator $\nabla_S$ in \eqref{dis surface gradient} satisfies
\begin{subequations}
\label{eqn:alter gradient}
\begin{align}
\label{eqn:alter gradient scalar}
&\nabla_S f=(\partial_{\boldsymbol{\tau}_1}f)\,\boldsymbol{\tau}_1+(\partial_{\boldsymbol{\tau}_2}f)\,\boldsymbol{\tau}_2,\\[0.5em]
\label{eqn:alter gradient vector}
&\nabla_S \boldsymbol{F}=(\partial_{\boldsymbol{\tau}_1}\boldsymbol{F})\,\boldsymbol{\tau}_1^T+(\partial_{\boldsymbol{\tau}_2}\boldsymbol{F})\,\boldsymbol{\tau}_2^T,
\end{align}
\end{subequations}
where $\partial_{\boldsymbol{\tau}}f$ denotes the directional derivative of $f$ with respect to $\boldsymbol{\tau}$.
\end{lemma}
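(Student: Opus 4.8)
The plan is to prove the scalar identity \eqref{eqn:alter gradient scalar} first and then obtain the vector identity \eqref{eqn:alter gradient vector} componentwise. The guiding observation is that both sides of \eqref{eqn:alter gradient scalar} are \emph{tangential}, i.e., orthogonal to $\boldsymbol{n}$: the right-hand side lies in $\mathrm{span}\{\boldsymbol{\tau}_1,\boldsymbol{\tau}_2\}$ by construction, while the left-hand side, given by \eqref{dis surface gradient}, is a cross product with $\boldsymbol{n}$ and is therefore perpendicular to $\boldsymbol{n}$ as well. Since the triangle is non-degenerate, the two edge vectors $\boldsymbol{q}_1-\boldsymbol{q}_2$ and $\boldsymbol{q}_2-\boldsymbol{q}_3$ are linearly independent and both lie in the tangent plane, hence form a basis of it; consequently a tangential vector is uniquely determined by its inner products against these two edges. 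It therefore suffices to show that the two sides of \eqref{eqn:alter gradient scalar} yield the same pairing with each edge vector.

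For the right-hand side, I would write an edge as $\boldsymbol{q}_a-\boldsymbol{q}_b=c_1\boldsymbol{\tau}_1+c_2\boldsymbol{\tau}_2$ in the orthonormal frame; using $\boldsymbol{\tau}_i\cdot\boldsymbol{\tau}_j=\delta_{ij}$ the pairing collapses to $c_1\partial_{\boldsymbol{\tau}_1}f+c_2\partial_{\boldsymbol{\tau}_2}f=\nabla f\cdot(\boldsymbol{q}_a-\boldsymbol{q}_b)$. Because $f\in\mathcal{P}^1(\sigma)$ is affine, this equals the finite difference $f(\boldsymbol{q}_a)-f(\boldsymbol{q}_b)$. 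Thus matching the two sides reduces to verifying that the discrete gradient \eqref{dis surface gradient} obeys the same edge-difference rule, namely $\nabla_S f\cdot(\boldsymbol{q}_a-\boldsymbol{q}_b)=f(\boldsymbol{q}_a)-f(\boldsymbol{q}_b)$.

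To verify this last relation I would expand $\nabla_S f\cdot(\boldsymbol{q}_1-\boldsymbol{q}_2)$ via the scalar triple product $(\boldsymbol{a}\times\boldsymbol{n})\cdot\boldsymbol{c}=\det[\boldsymbol{a},\boldsymbol{n},\boldsymbol{c}]$, turning the expression into a sum of three determinants linear in the nodal values $f(\boldsymbol{q}_i)$. The determinant multiplying $f(\boldsymbol{q}_3)$ carries a repeated column and vanishes; introducing the edge vectors $\boldsymbol{a}=\boldsymbol{q}_2-\boldsymbol{q}_1$, $\boldsymbol{b}=\boldsymbol{q}_3-\boldsymbol{q}_2$ and using $\mathcal{J}\{\sigma\}=\boldsymbol{a}\times\boldsymbol{b}=|\mathcal{J}\{\sigma\}|\,\boldsymbol{n}$, the remaining two simplify to $\pm\,\boldsymbol{n}\cdot\mathcal{J}\{\sigma\}=\pm|\mathcal{J}\{\sigma\}|$. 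Dividing by $|\mathcal{J}\{\sigma\}|$ then gives exactly $f(\boldsymbol{q}_1)-f(\boldsymbol{q}_2)$, and the other two edges follow by the same computation with indices permuted. I expect the only delicate point to be the orientation bookkeeping: tracking the signs from column transpositions in the determinants and the counterclockwise convention that fixes $\boldsymbol{n}=\mathcal{J}\{\sigma\}/|\mathcal{J}\{\sigma\}|$. This is elementary once the edge vectors $\boldsymbol{a},\boldsymbol{b}$ are in place.

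Finally, the vector identity \eqref{eqn:alter gradient vector} is immediate. By the definition $\nabla_S\boldsymbol{F}=(\nabla_S F_1,\nabla_S F_2,\nabla_S F_3)^T$, the scalar case applied to each component gives that the $k$-th row of $\nabla_S\boldsymbol{F}$ is $\nabla_S F_k=(\partial_{\boldsymbol{\tau}_1}F_k)\boldsymbol{\tau}_1+(\partial_{\boldsymbol{\tau}_2}F_k)\boldsymbol{\tau}_2$; stacking these rows produces precisely the rank-two matrix $(\partial_{\boldsymbol{\tau}_1}\boldsymbol{F})\boldsymbol{\tau}_1^T+(\partial_{\boldsymbol{\tau}_2}\boldsymbol{F})\boldsymbol{\tau}_2^T$, which is \eqref{eqn:alter gradient vector}.
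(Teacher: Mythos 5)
Your proposal is correct, and it reaches the identity by a genuinely different (though related) route from the paper. The paper's proof goes through barycentric coordinates: using the same kind of scalar-triple-product manipulation you employ, it first derives $\lambda_1=\frac{(\boldsymbol{q}_2-\boldsymbol{q}_3)\times\boldsymbol{n}}{|\mathcal{J}\{\sigma\}|}\cdot(\boldsymbol{x}-\boldsymbol{q}_3)$ (and similarly $\lambda_2,\lambda_3$), then computes $\partial_{\boldsymbol{\tau}_i}f$ from the limit definition applied to $f=\sum_i f(\boldsymbol{q}_i)\lambda_i$ to conclude $\partial_{\boldsymbol{\tau}_i}f=\nabla_S f\cdot\boldsymbol{\tau}_i$, and finally expands $\nabla_S f$ in the orthonormal basis $\{\boldsymbol{n},\boldsymbol{\tau}_1,\boldsymbol{\tau}_2\}$ using $\nabla_S f\cdot\boldsymbol{n}=0$. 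You instead characterize both sides as the unique tangential vector whose pairings with the two edge vectors equal the nodal finite differences $f(\boldsymbol{q}_a)-f(\boldsymbol{q}_b)$: tangentiality plus testing against the edge basis replaces the paper's orthonormal expansion, and the affine edge-difference rule replaces the explicit barycentric formula. The two arguments share the same computational kernel --- the identity $\det[\boldsymbol{a},\boldsymbol{b},\boldsymbol{n}]=|\mathcal{J}\{\sigma\}|$ fixed by the counterclockwise orientation, and the observation that the discrete gradient, being a cross product with $\boldsymbol{n}$, is tangential --- but yours avoids barycentric coordinates and limit definitions entirely and is purely linear-algebraic, while the paper's is more constructive in that it exhibits the gradients of the nodal basis functions (standard finite-element bookkeeping that is reusable elsewhere). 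Both proofs dispose of the vector identity \eqref{eqn:alter gradient vector} in the same way, componentwise by stacking rows.
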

\begin{proof}
It suffices to prove \eqref{eqn:alter gradient scalar}. Let $\boldsymbol{x}=\lambda_1 \boldsymbol{q}_1+\lambda_2\boldsymbol{q}_2+\lambda_3\boldsymbol{q}_3$ with $\lambda_1+\lambda_2+\lambda_3=1$ be a point in $\sigma$. We observe that
\begin{align}
	 (\boldsymbol{q}_3-\boldsymbol{q}_2)\times \boldsymbol{n}\cdot (\boldsymbol{x}-\boldsymbol{q}_3)&= (\boldsymbol{x}-\boldsymbol{q}_3)\times (\boldsymbol{q}_3-\boldsymbol{q}_2)\cdot\boldsymbol{n}\nonumber\\
	&= (-\lambda_1 (\boldsymbol{q}_3-\boldsymbol{q}_1)-\lambda_2 (\boldsymbol{q}_3-\boldsymbol{q}_2))\times (\boldsymbol{q}_3-\boldsymbol{q}_2)\cdot\boldsymbol{n}\nonumber\\
	&=-\lambda_1 (\boldsymbol{q}_2-\boldsymbol{q}_1+\boldsymbol{q}_3-\boldsymbol{q}_2)\times (\boldsymbol{q}_3-\boldsymbol{q}_2)\cdot\boldsymbol{n}\nonumber\\
	&=-\lambda_1|\mathcal{J}\{\sigma\}|.
\end{align}
Thus $\lambda_1=\frac{ (\boldsymbol{q}_2-\boldsymbol{q}_3)\times \boldsymbol{n}}{|\mathcal{J}\{\sigma\}|}\cdot (\boldsymbol{x}-\boldsymbol{q}_3)$, and $\lambda_2, \lambda_3$ can be derived similarly.

By definition of the directional derivative, we deduce that.
\begin{align}
\partial_{\boldsymbol{\tau}_1}f&=\lim_{h\to 0}\frac{f (\boldsymbol{x}+h\boldsymbol{\tau}_1)-f (\boldsymbol{x})}{h}\nonumber\\
&=\lim_{h\to 0}\frac{1}{h}\left (f (\boldsymbol{q}_1)\frac{ (\boldsymbol{q}_2-\boldsymbol{q}_3)\times \boldsymbol{n}}{|\mathcal{J}\{\sigma\}|}\cdot (h\boldsymbol{\tau}_1)\right.\nonumber\\
&\quad \left.+f (\boldsymbol{q}_2)\frac{ (\boldsymbol{q}_3-\boldsymbol{q}_1)\times \boldsymbol{n}}{|\mathcal{J}\{\sigma\}|}\cdot (h\boldsymbol{\tau}_1)+f (\boldsymbol{q}_3)\frac{ (\boldsymbol{q}_1-\boldsymbol{q}_2)\times \boldsymbol{n}}{|\mathcal{J}\{\sigma\}|}\cdot (h\boldsymbol{\tau}_1)\right)\nonumber\\
&=\nabla_Sf\cdot \boldsymbol{\tau}_1.
\end{align}
Similarly, we have $\partial_{\boldsymbol{\tau}_2}f=\nabla_S f\cdot\boldsymbol{\tau}_2$.
Since $\{\boldsymbol{n}, \boldsymbol{\tau}_1, \boldsymbol{\tau}_2\}$ forms an orthonormal basis, by vector decomposition and $\nabla_S f\cdot\boldsymbol{n}=0$, we obtain
\begin{align}
	\nabla_S f&= (\nabla_S f\cdot\boldsymbol{n})\boldsymbol{n}+ (\nabla_S f\cdot\boldsymbol{\tau}_1)\boldsymbol{\tau}_1+ (\nabla_S f\cdot\boldsymbol{\tau}_2)\boldsymbol{\tau}_2\nonumber\\
	&=(\partial_{\boldsymbol{\tau}_1}f)\,\boldsymbol{\tau}_1+(\partial_{\boldsymbol{\tau}_2}f)\,\boldsymbol{\tau}_2,
\end{align}
which is the desired indentity.
\end{proof}

With the help of \eqref{eqn:alter gradient}, we can then give the following upperbound for the summand $\gamma (\boldsymbol{n})|\sigma|$ in the discretized energy $W$ \eqref{dis W}.
\begin{lemma}\label{lemma:gamma diff}
Suppose $\sigma, \bar{\sigma}$ are two non-degenerated triangles with counterclockwisely ordered vertices $\{\boldsymbol{q}_1, \boldsymbol{q}_2, \boldsymbol{q}_3\}, \{\bar{\boldsymbol{q}}_1, \bar{\boldsymbol{q}}_2, \bar{\boldsymbol{q}}_3\}$ and outward unit normal vectors $\boldsymbol{n}, \bar{\boldsymbol{n}}$, respectively. $\boldsymbol{X}$ is a vector valued function in $[\mathcal{P}^1 (\sigma)]^3$ satisfying $\boldsymbol{X} (\boldsymbol{q}_i)=\bar{\boldsymbol{q}}_i, i=1,2,3$. Then for any $k (\boldsymbol{n})\geq k_0 (\boldsymbol{n})$, the following inequality holds
\begin{equation}\label{aux ineq}
	\frac{1}{6}|\sigma|\sum_{i=1}^3 (\boldsymbol{Z}_k (\boldsymbol{n})\nabla_S \boldsymbol{X} ( (\boldsymbol{q}_i)^-)):\nabla_S\boldsymbol{X} ( (\boldsymbol{q}_i)^-)\geq \gamma (\bar{\boldsymbol{n}})|\bar{\sigma}|.
\end{equation}
\end{lemma}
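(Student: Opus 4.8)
The plan is to use that $\boldsymbol{X}\in[\mathcal{P}^1(\sigma)]^3$ is affine, so $\nabla_S\boldsymbol{X}$ is \emph{constant} on $\sigma$ and its three one-sided limits at the vertices coincide. Consequently the summand in \eqref{aux ineq} is independent of $i$, and the factor $\tfrac{1}{6}\cdot 3=\tfrac12$ collapses the left-hand side to $\tfrac12|\sigma|\,(\boldsymbol{Z}_k(\boldsymbol{n})\nabla_S\boldsymbol{X}):\nabla_S\boldsymbol{X}$. I would then fix an orthonormal frame $\{\boldsymbol{n},\boldsymbol{\tau}_1,\boldsymbol{\tau}_2\}$ with $\boldsymbol{\tau}_1\times\boldsymbol{\tau}_2=\boldsymbol{n}$ and set $\boldsymbol{a}:=\partial_{\boldsymbol{\tau}_1}\boldsymbol{X}$, $\boldsymbol{b}:=\partial_{\boldsymbol{\tau}_2}\boldsymbol{X}$, so that the alternative gradient formula \eqref{eqn:alter gradient vector} reads $\nabla_S\boldsymbol{X}=\boldsymbol{a}\boldsymbol{\tau}_1^T+\boldsymbol{b}\boldsymbol{\tau}_2^T$. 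A short trace computation using $\boldsymbol{\tau}_i^T\boldsymbol{\tau}_j=\delta_{ij}$ gives
\[
(\boldsymbol{Z}_k(\boldsymbol{n})\nabla_S\boldsymbol{X}):\nabla_S\boldsymbol{X}=\boldsymbol{a}^T\boldsymbol{Z}_k(\boldsymbol{n})\boldsymbol{a}+\boldsymbol{b}^T\boldsymbol{Z}_k(\boldsymbol{n})\boldsymbol{b},
\]
so the claim reduces to $\tfrac12|\sigma|\bigl(\boldsymbol{a}^T\boldsymbol{Z}_k(\boldsymbol{n})\boldsymbol{a}+\boldsymbol{b}^T\boldsymbol{Z}_k(\boldsymbol{n})\boldsymbol{b}\bigr)\ge\gamma(\bar{\boldsymbol{n}})|\bar\sigma|$.

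Next I would translate the geometry into the vectors $\boldsymbol{a},\boldsymbol{b}$. Since $\boldsymbol{X}$ is the affine map carrying $\sigma$ onto $\bar\sigma$, its linear part sends the orthonormal tangent pair $(\boldsymbol{\tau}_1,\boldsymbol{\tau}_2)$ to $(\boldsymbol{a},\boldsymbol{b})$, so the area scales by the parallelogram area: $|\bar\sigma|=|\boldsymbol{a}\times\boldsymbol{b}|\,|\sigma|$. Because $\sigma$ and $\bar\sigma$ are both labeled counterclockwise, $\boldsymbol{X}$ is orientation preserving, forcing $\boldsymbol{a}\times\boldsymbol{b}=|\boldsymbol{a}\times\boldsymbol{b}|\,\bar{\boldsymbol{n}}$. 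The positive $1$-homogeneity of $\gamma$ then yields $\gamma(\boldsymbol{a}\times\boldsymbol{b})=|\boldsymbol{a}\times\boldsymbol{b}|\,\gamma(\bar{\boldsymbol{n}})$, and after cancelling $|\sigma|$ the desired inequality becomes the clean, frame-free statement
\[
\tfrac12\bigl(\boldsymbol{a}^T\boldsymbol{Z}_k(\boldsymbol{n})\boldsymbol{a}+\boldsymbol{b}^T\boldsymbol{Z}_k(\boldsymbol{n})\boldsymbol{b}\bigr)\ge\gamma(\boldsymbol{a}\times\boldsymbol{b}).
\]

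Finally I would close the estimate analytically. As $\boldsymbol{Z}_k(\boldsymbol{n})$ is positive definite, both quadratic forms are nonnegative, and the arithmetic--geometric mean inequality gives $\tfrac12(\boldsymbol{a}^T\boldsymbol{Z}_k\boldsymbol{a}+\boldsymbol{b}^T\boldsymbol{Z}_k\boldsymbol{b})\ge\sqrt{(\boldsymbol{a}^T\boldsymbol{Z}_k\boldsymbol{a})(\boldsymbol{b}^T\boldsymbol{Z}_k\boldsymbol{b})}=\sqrt{F_k(\boldsymbol{n},\boldsymbol{a},\boldsymbol{b})}$ by \eqref{def of F_k}. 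The defining property \eqref{def of k_0} gives $F_k(\boldsymbol{n},\boldsymbol{u},\boldsymbol{v})\ge\gamma^2(\boldsymbol{u}\times\boldsymbol{v})$ for all unit $\boldsymbol{u},\boldsymbol{v}$ whenever $k\ge k_0$; since $\boldsymbol{a},\boldsymbol{b}$ are nonzero (both triangles are non-degenerate), and both $F_k(\boldsymbol{n},\cdot,\cdot)$ and $\gamma^2(\cdot\times\cdot)$ are degree-$2$ homogeneous in each argument, writing $\boldsymbol{a}=|\boldsymbol{a}|\widehat{\boldsymbol{a}}$, $\boldsymbol{b}=|\boldsymbol{b}|\widehat{\boldsymbol{b}}$ extends this to $F_k(\boldsymbol{n},\boldsymbol{a},\boldsymbol{b})\ge\gamma^2(\boldsymbol{a}\times\boldsymbol{b})$. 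Chaining the three steps proves the reduced inequality, hence \eqref{aux ineq}. I expect the only genuinely delicate point to be the orientation/area bookkeeping of the middle paragraph: one must verify that the counterclockwise labeling pins down the sign $\boldsymbol{a}\times\boldsymbol{b}=+|\boldsymbol{a}\times\boldsymbol{b}|\,\bar{\boldsymbol{n}}$ (so that $\gamma$ evaluates to $+|\boldsymbol{a}\times\boldsymbol{b}|\gamma(\bar{\boldsymbol{n}})$) together with the identification $|\bar\sigma|=|\boldsymbol{a}\times\boldsymbol{b}|\,|\sigma|$; once these hold, the remaining work is just AM--GM and the homogeneous extension of the $k_0$-inequality.
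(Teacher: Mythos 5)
Your proposal is correct and follows essentially the same route as the paper's proof: collapse the vertex sum using the constancy of $\nabla_S\boldsymbol{X}$ on $\sigma$, decompose in the tangent frame via \eqref{eqn:alter gradient vector}, apply the arithmetic--geometric mean inequality to reach $\sqrt{F_k}$, and invoke $k(\boldsymbol{n})\geq k_0(\boldsymbol{n})$ together with the homogeneity of $\gamma$ and $F_k$. The only cosmetic difference is in handling the sign of $\boldsymbol{a}\times\boldsymbol{b}$: you pin it down by orientation preservation of the counterclockwise labelings, whereas the paper merely shows $\partial_{\boldsymbol{\tau}_1}\boldsymbol{X},\partial_{\boldsymbol{\tau}_2}\boldsymbol{X}\perp\bar{\boldsymbol{n}}$ so that the cross product is parallel to $\bar{\boldsymbol{n}}$, letting the symmetry assumption $\gamma(-\boldsymbol{n})=\gamma(\boldsymbol{n})$ in \eqref{engstabgmp} absorb any sign ambiguity.
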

\begin{proof}
Since $\boldsymbol{X}\in [\mathcal{P}^1 (\sigma)]^3$, its derivative $\nabla_S \boldsymbol{X}$ is a constant in $\sigma$. Suppose $\{\boldsymbol{n}, \boldsymbol{\tau}_1, \boldsymbol{\tau}_2\}$ forms an orthonormal basis, by applying \eqref{eqn:alter gradient vector}, we obtain
\begin{equation}
	\nabla_S\boldsymbol{X} ( (\boldsymbol{q}_i)^-)=(\partial_{\boldsymbol{\tau}_1}\boldsymbol{X})\,\boldsymbol{\tau}_1^T+(\partial_{\boldsymbol{\tau}_2}\boldsymbol{X})\,\boldsymbol{\tau}_2^T,\qquad i=1,2,3.
\end{equation}
Let $\partial_{\boldsymbol{\tau}_1}\boldsymbol{X}=s\boldsymbol{u}, \partial_{\boldsymbol{\tau}_2}\boldsymbol{X}=t\boldsymbol{v}$, where $s, t>0$ and $\boldsymbol{u}, \boldsymbol{v}\in \mathbb{S}^2$. Substituding this and the definition of $Z_k (\boldsymbol{n})$ \eqref{def of Z_k} into the LHS of \eqref{aux ineq} yields that
\begin{align}\label{lhs est}
&\frac{1}{6}|\sigma|\sum_{i=1}^3 (\boldsymbol{Z}_k (\boldsymbol{n})\nabla_S \boldsymbol{X} ( (\boldsymbol{q}_i)^-)):\nabla_S\boldsymbol{X} ( (\boldsymbol{q}_i)^-)\nonumber\\
&=\frac{1}{2}|\sigma|\left (\boldsymbol{Z}_k (\boldsymbol{n}) (s\boldsymbol{u}\boldsymbol{\tau}_1^T+t\boldsymbol{v}\boldsymbol{\tau}_2^T)\right): (s\boldsymbol{u}\boldsymbol{\tau}_1^T+t\boldsymbol{v}\boldsymbol{\tau}_2^T)\nonumber\\
&=\frac{1}{2}|\sigma|\left (s^2 (\boldsymbol{\tau}_1\cdot\boldsymbol{\tau}_1)\boldsymbol{u}^T\boldsymbol{Z}_k (\boldsymbol{n})\boldsymbol{u}+t^2 (\boldsymbol{\tau}_2\cdot\boldsymbol{\tau}_2)\boldsymbol{v}^T\boldsymbol{Z}_k (\boldsymbol{n})\boldsymbol{v}\right)\nonumber\\
&\geq |\sigma||st|\sqrt{F_k (\boldsymbol{n}, \boldsymbol{u}, \boldsymbol{v})}\geq |\sigma||st|\gamma (\boldsymbol{u}\times\boldsymbol{v}).
\end{align}
For the RHS of \eqref{aux ineq}, since $\bar{\sigma}=\boldsymbol{X} (\sigma)$, it holds that
\begin{equation}\label{rhs est}
	\gamma (\bar{n})|\bar{\sigma}|=\gamma (\bar{n})\int_{\sigma}|\partial_{\boldsymbol{\tau}_1}\boldsymbol{X}\times \partial_{\boldsymbol{\tau}_2}\boldsymbol{X}|dA=|\sigma||st|\gamma (\bar{\boldsymbol{n}})|\boldsymbol{u}\times\boldsymbol{v}|.
\end{equation}

Finally, since $\boldsymbol{X}\in [\mathcal{P}^1 (\sigma)]^3$, for $\boldsymbol{p}$ and $\boldsymbol{p}+h\boldsymbol{\tau}_1$ in $\sigma$, we have $\boldsymbol{X} (\boldsymbol{p}+h\boldsymbol{\tau}_1)$ and $\boldsymbol{X} (\boldsymbol{p})$ in $\bar{\sigma}$. From the definition of directional derivative for function in $[\mathcal{P}^1 (\sigma)]^3$, we get
\begin{equation}
 	s\boldsymbol{u}\cdot\bar{\boldsymbol{n}}=(\partial_{\boldsymbol{\tau}_1}\boldsymbol{X})\,\cdot\bar{\boldsymbol{n}}=\frac{\boldsymbol{X} (\boldsymbol{p}+h\boldsymbol{\tau}_1)-\boldsymbol{X} (\boldsymbol{p})}{h}\cdot\bar{\boldsymbol{n}}=0,
 \end{equation}
 and similarly $\boldsymbol{v}\cdot\bar{\boldsymbol{n}}=0$, thus $\gamma (\boldsymbol{u}\times\boldsymbol{v})=|\boldsymbol{u}\times\boldsymbol{v}|\gamma (\bar{\boldsymbol{n}})$. This equation together with \eqref{lhs est} and \eqref{rhs est} yield the desired inequality \eqref{aux ineq}.
\end{proof}

With the help of lemma \eqref{lemma:gamma diff}, we can then prove the energy stability part \eqref{dis W prop} in our main theorem \ref{thm: main}.
\begin{proof}
First for any $\boldsymbol{p}\in \mathbb{S}^2$, since $k (\boldsymbol{n})\geq k_0 (\boldsymbol{n})$, we have
\begin{equation}
	\boldsymbol{p}^T\boldsymbol{Z}_k (\boldsymbol{n})\boldsymbol{p}=\gamma (\boldsymbol{n})-2 (\boldsymbol{\xi}\cdot\boldsymbol{p}) (\boldsymbol{n}\cdot\boldsymbol{p})+k (\boldsymbol{n}) (\boldsymbol{n}\cdot\boldsymbol{p})^2\geq 0,
\end{equation}
thus $\boldsymbol{Z}_k (\boldsymbol{n})$ is positive definite. By Cauchy inequality, it holds that
\begin{align}\label{cauchy ineq}
	&\langle \boldsymbol{Z}_k (\boldsymbol{n}^m)\nabla_S\boldsymbol{X}^{m+1}, \nabla_S (\boldsymbol{X}^{m+1}-\boldsymbol{X}^m)\rangle_{S^m}\nonumber\\
	&\geq\frac{1}{2}\langle \boldsymbol{Z}_k (\boldsymbol{n}^m)\nabla_S\boldsymbol{X}^{m+1}, \nabla_S\boldsymbol{X}^{m+1}\rangle_{S^m}-\frac{1}{2}\langle \boldsymbol{Z}_k (\boldsymbol{n}^m)\nabla_S\boldsymbol{X}^{m}, \nabla_S\boldsymbol{X}^{m}\rangle_{S^m}.
\end{align}

Suppose $\{\boldsymbol{n}_j^m, \boldsymbol{\tau}_{j,1}^m, \boldsymbol{\tau}_{j,2}^m\}$ forms an orthonomal basis for $1\leq j\leq J$, by \eqref{eqn:alter gradient vector} we obtain
\begin{align}\label{W as inner product}
	&\frac{1}{2}\langle \boldsymbol{Z}_k (\boldsymbol{n}^m)\nabla_S\boldsymbol{X}^{m}, \nabla_S\boldsymbol{X}^{m}\rangle_{S^m}\nonumber\\
	&=\frac{1}{6}\sum_{j=1}^J\sum_{i=1}^3|\sigma_j^m| (\boldsymbol{Z}_k (\boldsymbol{n}_j^m)\nabla_S\boldsymbol{X}^m|_{\sigma_j^m} ( (\boldsymbol{q}_{j_i}^m)^-)):\nabla_S\boldsymbol{X}^m|_{\sigma_j^m} ( (\boldsymbol{q}_{j_i}^m)^-)\nonumber\\
	&=\frac{1}{2}\sum_{j=1}^J|\sigma_j^m| (\boldsymbol{\tau}_{j,1}^m\cdot \boldsymbol{Z}_k (\boldsymbol{n}_j^m)\boldsymbol{\tau}_{j,1}^m+\boldsymbol{\tau}_{j,2}^m\cdot Z_k (\boldsymbol{n}_j^m)\boldsymbol{\tau}_{j,2}^m)\nonumber\\
	&=\frac{1}{2}\sum_{j=1}^J|\sigma_j^m|\gamma (\boldsymbol{n}_j^m) (\boldsymbol{\tau}_{j,1}^m\cdot \boldsymbol{\tau}_{j,1}^m+\boldsymbol{\tau}_{j,2}^m\cdot \boldsymbol{\tau}_{j,2}^m)\nonumber\\
	&=\sum_{j=1}^J|\sigma_j^m|\gamma (\boldsymbol{n}_j^m)=W^m.
\end{align}
Then apply lemma \eqref{lemma:gamma diff} for $\sigma=\sigma_j^m, \bar{\sigma}=\sigma_j^{m+1}$ and $\boldsymbol{X}=\boldsymbol{X}^{m+1}|_{\sigma_j^m}$, we know that
\begin{equation}\label{apply lem}
	\frac{1}{6}|\sigma_j^m|\sum_{i=1}^3 (\boldsymbol{Z}_k (\boldsymbol{n}_j^m)\nabla_S\boldsymbol{X}|_{\sigma_j^m} ( (\boldsymbol{q}_i)^-)):\nabla_S\boldsymbol{X}|_{\sigma_j^m} ( (\boldsymbol{q}_i)^-)\geq \gamma (\boldsymbol{n}_j^{m+1})|\sigma_j^{m+1}|.
\end{equation}
And this inequality holds for all $1\leq j\leq J$. Summing \eqref{apply lem} for $j=1, 2,\ldots, J$ and combining \eqref{cauchy ineq} and \eqref{W as inner product} yields that
\begin{equation}\label{W difference between two step}
	\langle \boldsymbol{Z}_k (\boldsymbol{n}^m)\nabla_S\boldsymbol{X}^{m+1}, \nabla_S (\boldsymbol{X}^{m+1}-\boldsymbol{X}^m)\rangle_{S^m}\geq W^{m+1}-W^m.
\end{equation}

Finally, choosing $\psi=\mu^{m+1}$ in \eqref{eqn:SP-PFEM eq1} and $\boldsymbol{\omega}=\boldsymbol{X}^{m+1}$ in \eqref{eqn:SP-PFEM eq2}, together with \eqref{W difference between two step} yields that
\begin{equation}
	W^{m+1}-W^m\leq \tau\left (\nabla_S\mu^{m+1}, \nabla_S\mu^{m+1}\right)_{S^m}\leq 0.
\end{equation}
Since this inequality is valid for all $m$, the unconditionally energy stable part \eqref{dis W prop} in theorem \ref{thm: main} is proved.
\end{proof}

\section{Numerical results}
In this section, we first state the setup for solving the SP-PFEM \eqref{eqn:SP-PFEM}. Then we present serval numerical computations, including the convergence test and the structural preserving test. Finally, we apply \eqref{eqn:SP-PFEM} to simulate the surface evolution for different anisotropic energies.

The minimal stabilizing function $k_0 (\boldsymbol{n})$ is given by the bilinear interpolation, where the interpolation points are $\boldsymbol{n}_{i,j}= (\cos\phi_i\cos\theta_j, \cos\phi_i\sin\theta_j,\sin\phi_i)^T, \phi_i=-\frac{\pi}{2}+\frac{i}{10}\pi, \theta_j=-\pi+\frac{j}{5}\pi,\, 0\leq i,j\leq 10$, and the $k_0 (\boldsymbol{n}_{i,j})$ is given by solving \eqref{def of k_0}. The surface energy matrix $Z_k (\boldsymbol{n})$ as well as the SP-PFEM \eqref{eqn:SP-PFEM} is thus determined by giving a stabilizing function $k (\boldsymbol{n})\geq k_0 (\boldsymbol{n})$.

The fully-implicit linear system \eqref{eqn:SP-PFEM} is solved by using the Newton's iterative method provided in \cite{bao2021structurepreserving}. And for each discrete time level $t_m=m\tau$, the iteration is terminated when $\left\|\boldsymbol{X}^{\delta}\right\|_\infty\leq 10^{-12}, \left\|\mu^{\delta}\right\|\leq 10^{-12}$, where $ (\boldsymbol{X}^{\delta}(\cdot), \mu^{\delta}(\cdot))\in [\mathbb{K}^m]^3\times \mathbb{K}^m$ is the Newton direction.

Given a initial shape $S_0$, we generate its approximation $S^0=\cup_{j=1}^J\bar{\sigma}_j^0$ with $J$ triangles $\{\sigma_j^0\}_{j=1}^J$ and $I$ vertices $\{\boldsymbol{q}_i^0\}_{i=1}^I$ by using a matlab toolbox called \textit{CFDTool} \cite{CFDTool} with a given parameter mesh size $h$. For the time step size $\tau$ and the mesh size $h$, we denote the solution of \eqref{eqn:SP-PFEM} with the initial approximation $S^0_h$ with $J (h)$ triangles and $I (h)$ vertices at $t_m$ by $ (\boldsymbol{X}_{h, \tau}^m, \mu_{h,\tau}^m)$. And we define $\boldsymbol{X}_{h, \tau} (t)$ by
\begin{equation}
	\boldsymbol{X}_{h,\tau} (t)=\frac{t-t_m}{\tau}\boldsymbol{X}_{h,\tau}^m+\frac{t_{m+1}-t}{\tau}\boldsymbol{X}_{h,\tau}^{m+1},\qquad \forall t\in [t_m, t_{m+1}),\, m\geq 0.
\end{equation}
And $S_{h,\tau} (t)$ is defined similarly.

To test the convergence rate of \eqref{eqn:SP-PFEM}, we adopt the manifold distance $M (\cdot, \cdot)$ to measure the difference between two closed surfaces $S_1$ and $S_2$, which is given by
\begin{equation}
	M (S_1,S_2):=2|\Omega_1\cup\Omega_2|-|\Omega_1|-|\Omega_2|,
\end{equation}
where we denote $\Omega_1$ and $\Omega_2$ to be the interior of $S_1$ and $S_2$, respectively. Based on the manifold distance, the numerical error is defined as
\begin{equation}
	e_{h,\tau} (t):=M (S_{h,\tau} (t), S (t)).
\end{equation}
Here $S (t)$ is approximated by the refined mesh $S_{h_e,\tau_e} (t)$ with $k (\boldsymbol{n})=k_0 (\boldsymbol{n})$, where $h_e=2^{-4}$ and $\tau_e=\frac{2}{25}h_e^2$.

In the numerical experiments for convergence rates, the time step size and the mesh size are chosen as $\tau=\frac{2}{25}h^2$, the initial shape $S_0$ is chosen as a $2\times 2\times 1$ cuboid, and its approximation is a polyhedron $S_{h_e, \tau_e}^0$ with $10718$ triangles and $5361$ vertices. We consider the following five cases of the anisotropic surface energy $\gamma (\boldsymbol{n})$ as well as the stabilizing function $k (\boldsymbol{n})$:
\begin{itemize}
\item Case 1: $\gamma (\boldsymbol{n})=1+\frac{1}{4} (n_1^4+n_2^4+n_3^4)$, $k (\boldsymbol{n})=k_0 (\boldsymbol{n})$;
\item Case 2: $\gamma (\boldsymbol{n})=1+\frac{1}{2} (n_1^4+n_2^4+n_3^4)$, $k (\boldsymbol{n})=k_0 (\boldsymbol{n})$;
\item Case 3: $\gamma (\boldsymbol{n})= (n_1^4+n_2^4+n_3^4)^{\frac{1}{4}}$, $k (\boldsymbol{n})=k_0 (\boldsymbol{n})$;
\item Case 4: $\gamma (\boldsymbol{n})= (n_1^4+n_2^4+n_3^4)^{\frac{1}{4}}$, $k (\boldsymbol{n})=k_0 (\boldsymbol{n})+1$;
\item Case 5: $\gamma (\boldsymbol{n})= (n_1^4+n_2^4+n_3^4)^{\frac{1}{4}}$, $k (\boldsymbol{n})=k_0 (\boldsymbol{n})+2$;
\item Case 6: $\gamma (\boldsymbol{n})= (n_1^4+n_2^4+n_3^4)^{\frac{1}{4}}$, $k (\boldsymbol{n})=k_0 (\boldsymbol{n})+5$.
\end{itemize}
The numerical errors are listed in TABLE \ref{tb: convergence rate}. We note that while the $\gamma (\boldsymbol{n})$ and $k (\boldsymbol{n})$ are different in each cases, the convergence rates for this manifold error are all about second order in $h$. This result indicates the proposed SP-PFEM \eqref{eqn:SP-PFEM} has a good robustness in convergence rate, and we can choose large $k (\boldsymbol{n})$ such as $k (\boldsymbol{n})\equiv \sup\limits_{\boldsymbol{n}\in\mathbb{S}^2} k_0 (\boldsymbol{n})$ to avoid the computation cost in bilinear interpolation without loss of efficiency.
\begin{table}[tbh]
\label{tb: convergence rate}
\begin{center}
\begin{tabular}{@{\extracolsep{\fill}}|c|cc|cc|cc|}\hline
$ (h,\tau)$ &$e_{h,\tau} (\frac{1}{2})$ \footnotesize{Case 1} & \footnotesize{order} &$e_{h,\tau} (\frac{1}{2})$ \footnotesize{Case 2} & \footnotesize{order} &$e_{h,\tau} (\frac{1}{2})$ \footnotesize{Case 3} & \footnotesize{order} \\ \hline
$ (h_0,\tau_0)$ & 1.24E-1 & - &1.47E-1 &-& 1.12E-1 &- \\ \hline
$ (\frac{h_0}{2},~\frac{\tau_0}{4})$ & 3.06E-2 & 2.01 &3.54E-2 &2.05& 2.82E-2 &1.98
\\ \hline
$ (\frac{h_0}{2^2},~\frac{\tau_0}{4^2})$ & 7.90E-3 & 1.96 &8.74E-3 &2.02& 7.54E-3 &1.90 \\ \hline

 \end{tabular}
 \begin{tabular}{@{\extracolsep{\fill}}|c|cc|cc|cc|}\hline
$ (h,\tau)$ &$e_{h,\tau} (\frac{1}{2})$ \footnotesize{Case 4} & \footnotesize{order} &$e_{h,\tau} (\frac{1}{2})$ \footnotesize{Case 5} & \footnotesize{order} &$e_{h,\tau} (\frac{1}{2})$ \footnotesize{Case 6} & \footnotesize{order} \\ \hline
$ (h_0,\tau_0)$ & 1.10E-1 & - &1.12E-1 &-& 1.12E-1 &- \\ \hline
$ (\frac{h_0}{2},~\frac{\tau_0}{4})$ & 2.83E-2 & 1.96 &2.89E-2 &1.96& 3.09E-2 &1.99
\\ \hline
$ (\frac{h_0}{2^2},~\frac{\tau_0}{4^2})$ & 7.48E-3 & 1.92 &7.58E-3 &1.93& 7.86E-3 &1.97 \\ \hline

 \end{tabular}\vspace{0.8em}
 \begin{tabular}{@{\extracolsep{\fill}}|c|cc|cc|cc|}\hline
$ (h,\tau)$ &$e_{h,\tau} (1)$\, \footnotesize{Case 1} & \footnotesize{order} &$e_{h,\tau} (1)$\, \footnotesize{Case 2} & \footnotesize{order} &$e_{h,\tau} (1)$ \,\footnotesize{Case 3} & \footnotesize{order} \\ \hline
$ (h_0,\tau_0)$ &1.46E-1 &-& 1.22E-1 & - & 1.11E-1 &- \\ \hline
$ (\frac{h_0}{2},~\frac{\tau_0}{4})$ &3.52E-2 &2.05& 3.01E-2 & 2.02 & 2.74E-2 &2.02
\\ \hline
$ (\frac{h_0}{2^2},~\frac{\tau_0}{4^2})$ &8.67E-3 &2.02& 7.75E-3 & 1.96 & 7.21E-3 &1.93 \\ \hline
 \end{tabular}
\begin{tabular}{@{\extracolsep{\fill}}|c|cc|cc|cc|}\hline
$ (h,\tau)$ &$e_{h,\tau} (1)$ \,\footnotesize{Case 4} & \footnotesize{order} &$e_{h,\tau} (1)$\, \footnotesize{Case 5} & \footnotesize{order} &$e_{h,\tau} (1)$\, \footnotesize{Case 6} & \footnotesize{order} \\ \hline
$ (h_0,\tau_0)$ & 1.10E-1 & - &1.10E-1 &-& 1.13E-1 &- \\ \hline
$ (\frac{h_0}{2},~\frac{\tau_0}{4})$ & 2.76E-2 & 1.99 &2.80E-2 &1.97& 2.90E-2 &1.96
\\ \hline
$ (\frac{h_0}{2^2},~\frac{\tau_0}{4^2})$ & 7.23E-3 & 1.93 &7.36E-3 &1.93& 7.56E-3 &1.94 \\ \hline

 \end{tabular}\vspace{1em}
\end{center}
\caption{Numerical error $e_{h,\tau}$ at time $T=\frac{1}{2}, 1$ and the convergence rate for simulating the anisotropic surface diffusion start from a $2\times 2\times 1$ cuboid with different anisotropic energies $\gamma (\boldsymbol{n})$ and stabilizing functions $k (\boldsymbol{n})$ given in Case 1-6. The mesh size, time step size, number of triangles and number of vertices for the coarse shapes are $ (h_0:=2^{-1}, \tau_0:=\frac{2^{-1}}{25}, 140, 72)$, and then $ (2^{-2}, \frac{2^{-3}}{25}, 624, 314),\, (2^{-3}, \frac{2^{-5}}{25}, 2502, 1253)$. }
 \end{table}

To examine the volume conservation and unconditionally energy dissipation, we consider these two indicators, the normalized volume change $\frac{\Delta V (t)}{V (0)}:=\frac{V (t)-V (0)}{V (0)}$ and the normalized energy $\frac{W (t)}{W (0)}$; and we choose the initial shape to be a $2\times2\times1$ ellipsoid. Figure \ref{fig: volume} shows the normalized volume change $\frac{\Delta V (t)}{V (0)}$ for the anisotropy in Case 1, Case 2, Case 3, with fixed $h=2^{-3}, \tau=\frac{2}{25}h^2$ in (a), (b), (c), respectively. We find the order of magnitude of the volume change $\Delta V (t)$ is $10^{-15}$, which is close to the machine epsilon $10^{-16}$, and thus indicates the volume is well conserved. Figure \ref{fig: energy1} plot the normalized energy $\frac{W (t)}{W (0)}$ for different cases and mesh size $h$ with $\tau=\frac{2}{25}h^2$ and for different $\tau$ with a constant mesh size $h=2^{-4}$, respectively. We observe the normalized energy $\frac{W (t)}{W (0)}$ is monotonically decreasing in time, even for the relatively large time step size $\tau=0.01$. And these graphs also suggest the stabilizing function $k(\boldsymbol{n})$ does not infect the energy, and we can choose a relatively large stabilizing function $k(\boldsymbol{n})$, which consists the result in convergence rate test. And the above volume and energy tests validate the theorem \eqref{thm: main} numerically.

\begin{figure}[hbtp!]
\centering
\includegraphics[width=1\textwidth]{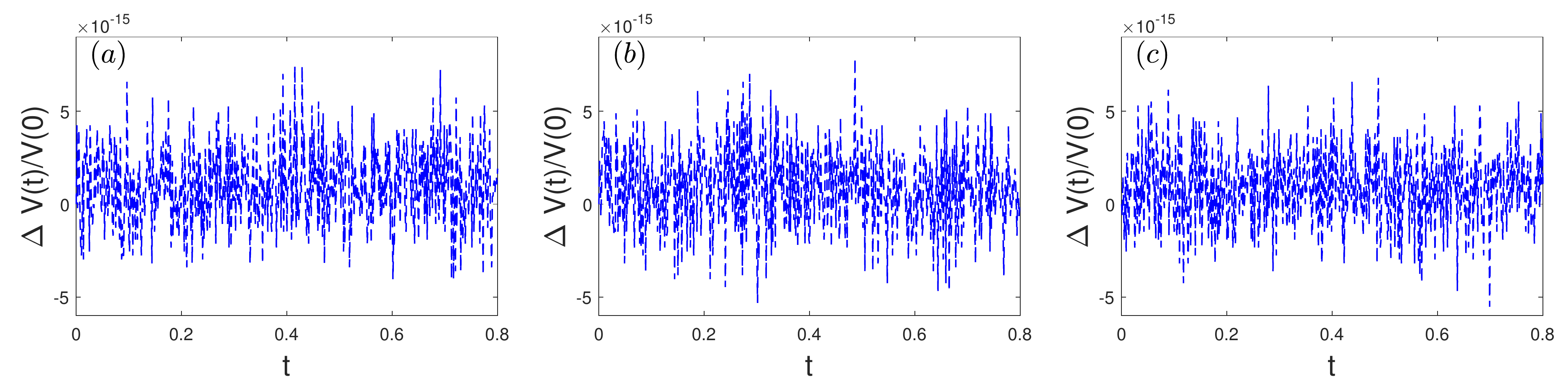}
\caption{Plot of the normalized volume change $\frac{\Delta V(t)}{V(0)}$ for different anisotropic energies in Case 1 (a), Case 2 (b) and Case 3 (c).}
\label{fig: volume}
\end{figure}
\begin{figure}[hbtp!]
\centering
\includegraphics[width=1\textwidth]{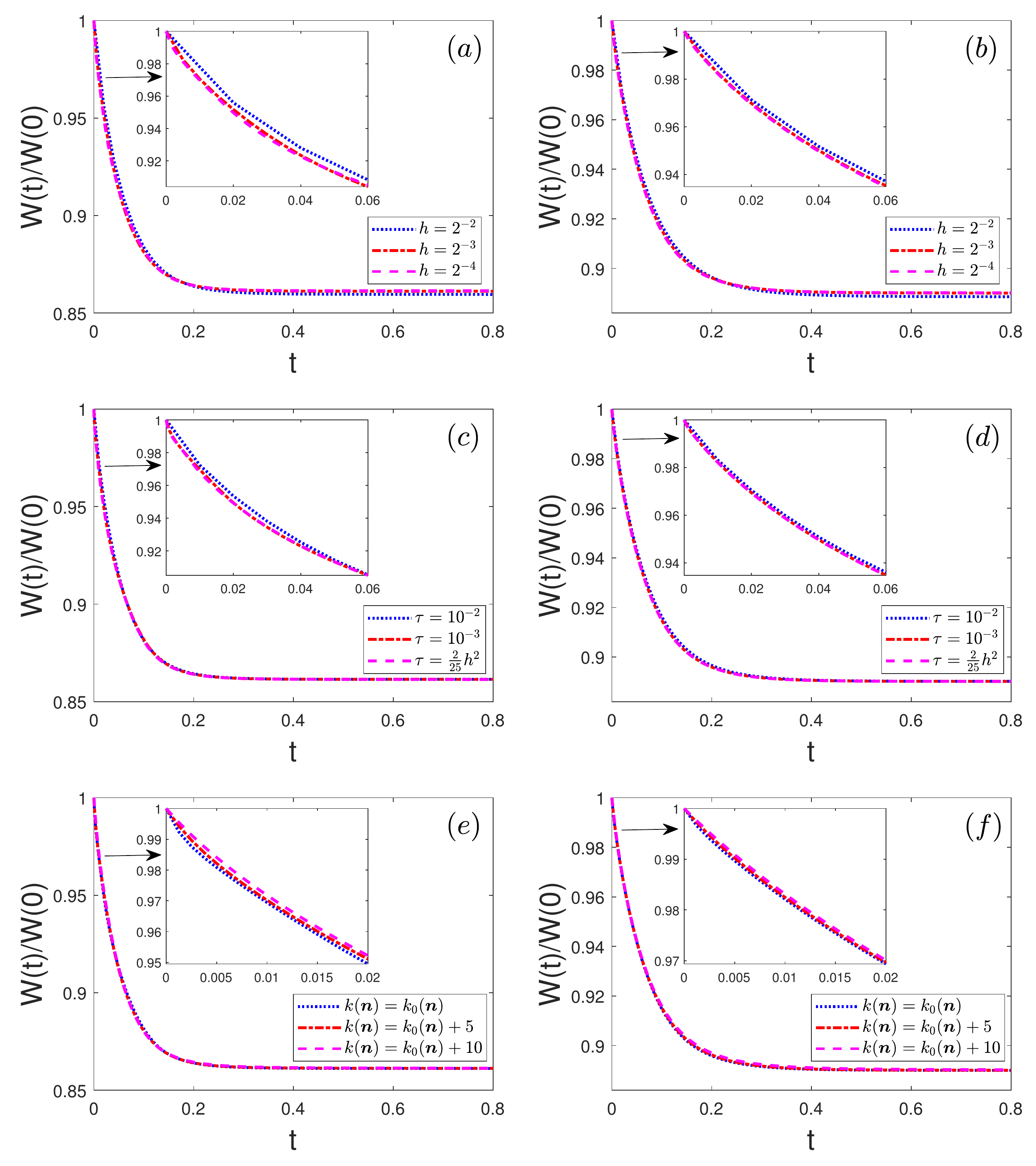}
\caption{Plot of the normalized energy $\frac{W(t)}{W(0)}$ for weak/strong anisotropy $\gamma(\boldsymbol{n})=1+\frac{1}{4}(n_1^4+n_2^4+n_3^4)$ or $\gamma(\boldsymbol{n})=1+\frac{1}{2}(n_1^4+n_2^4+n_3^4)$ for $k(\boldsymbol{n})=k_0(\boldsymbol{n})$ with different $h$ and $\tau$ (a), (b); for fixed $h=2^{-4}$ with different $\tau$ (c), (d); for $h=2^{-4}, \tau=\frac{2}{25}h^2$ with different $k(\boldsymbol{n})$ (e), (f), respectively.}
\label{fig: energy1}
\end{figure}

Finally, we use \eqref{eqn:SP-PFEM} to investigate the motion by anisotropic surface diffusion with different anisotropies. We consider the weak anisotropy $\gamma (\boldsymbol{n})=\sqrt{n_1^2+n_2^2+2n_3^2}$ with $k (\boldsymbol{n})=k_0 (\boldsymbol{n})$ first. The evolutions of a smooth $2\times 2\times 1$ ellipsoid and a non-smooth $2\times 2\times 1$ cuboid are shown in figure \ref{fig: evolve1} and figure \ref{fig: evolve2}, respectively. We choose the mesh size $h=2^{-4}$ and the time step size $\tau=\frac{2}{25}h^2$, and the ellipsoid and the cuboid are initially approximated by $K (h) = 10718, I(h) = 5361$ and $K(h) = 32768, I (h) = 16386$, respectively. By comparing the two figures, we find the two numerical equilibriums are close in shape, which indicates our scheme \eqref{eqn:SP-PFEM} is stable in catching the equilibrium shape for different initial shapes. We can see that the meshes are well distributed during the evolution, and we do not need to remesh the surface.

\begin{figure}[hbtp!]
\centering
\includegraphics[width=1\textwidth]{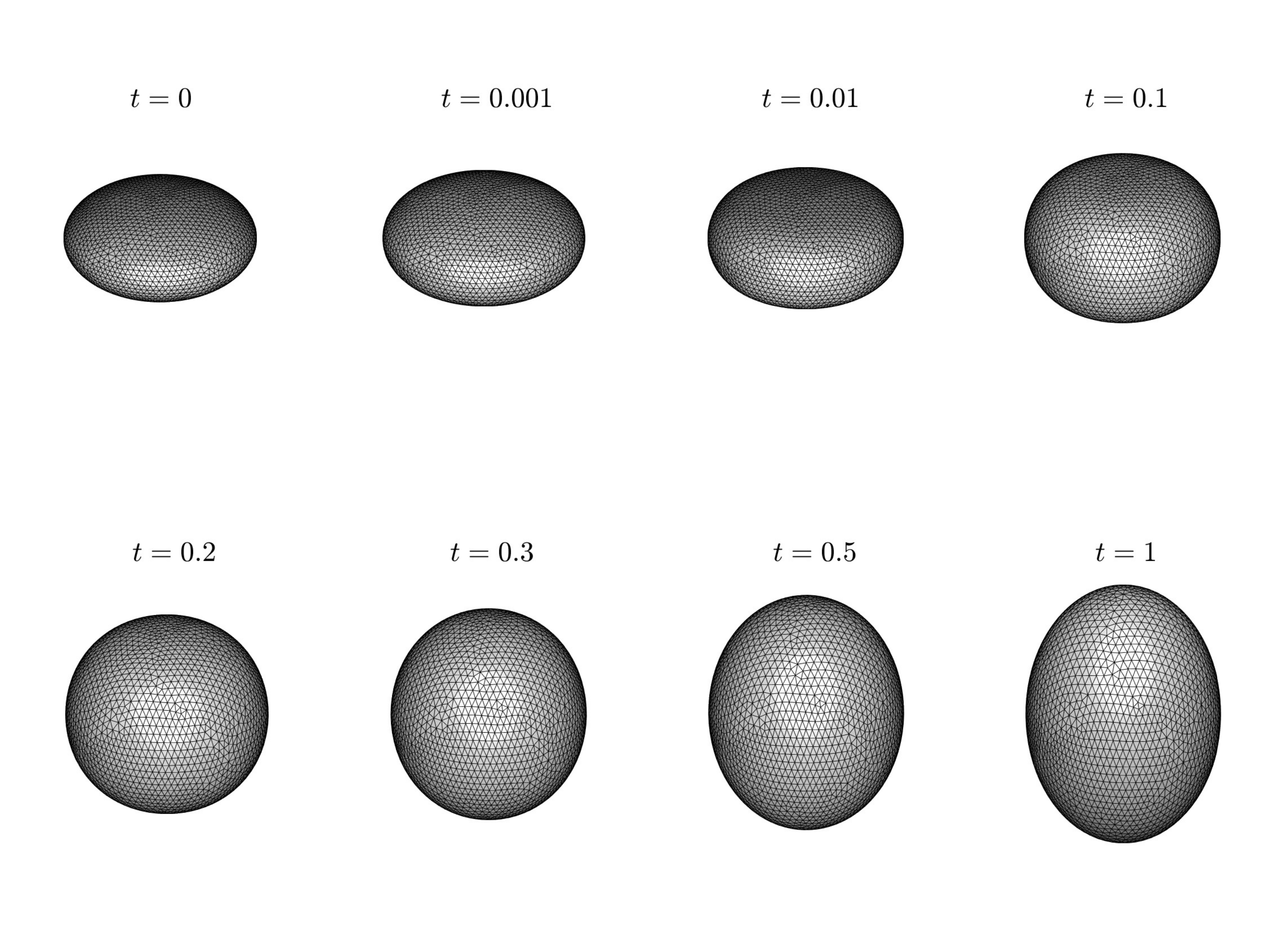}
\caption{Evolution of a $2\times 2\times 1$ ellipsoid by anisotropic surface diffusion with a weak anisotropy $\gamma(\boldsymbol{n})=\sqrt{n_1^2+n_2^2+2n_3^2}$ and $k(\boldsymbol{n})=k_0(\boldsymbol{n})$ at different times.}
\label{fig: evolve1}
\end{figure}

\begin{figure}[hbtp!]
\centering
\includegraphics[width=1\textwidth]{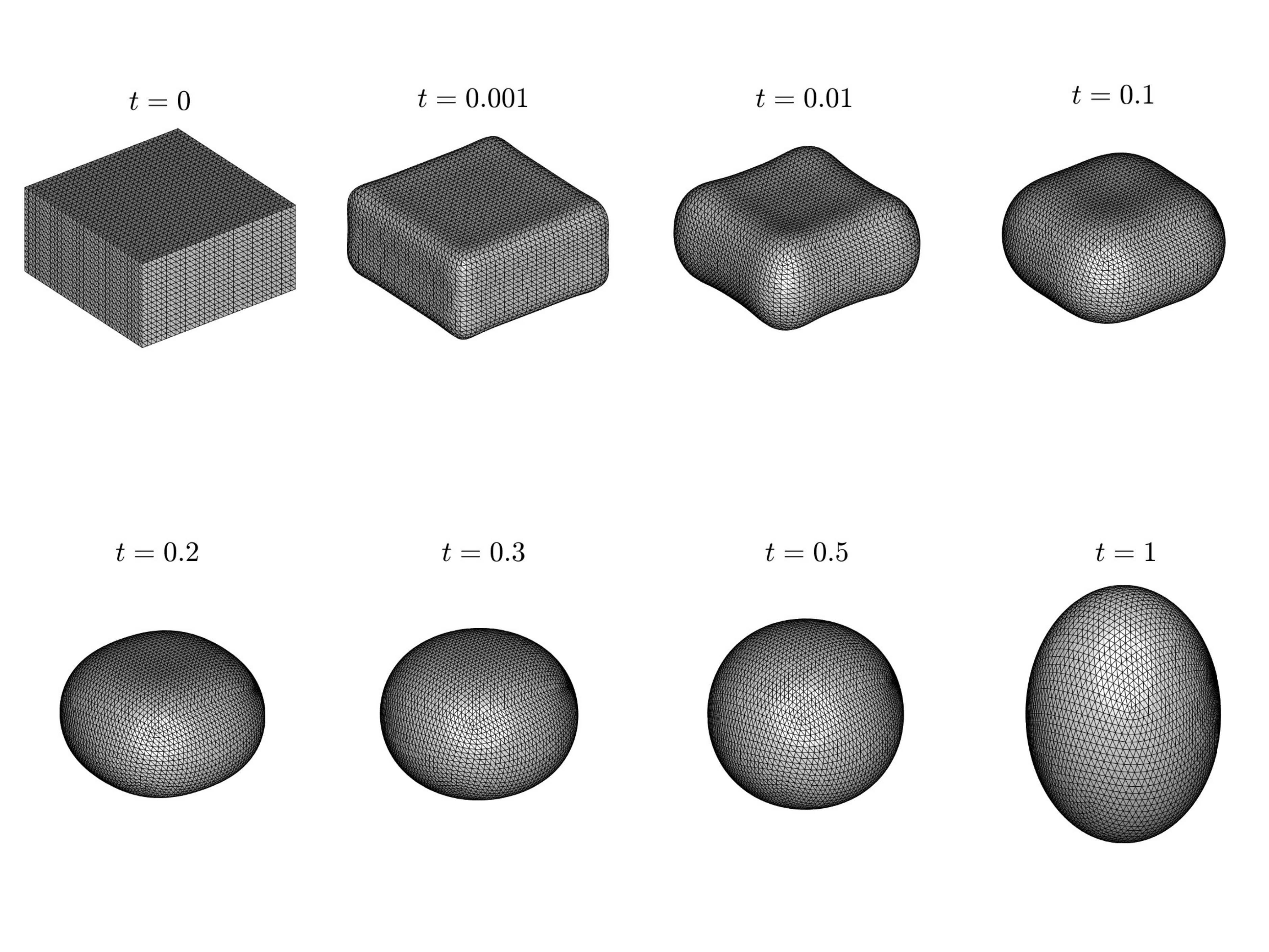}
\caption{Evolution of a $2\times 2\times 1$ cuboid by anisotropic surface diffusion with a weak anisotropy $\gamma(\boldsymbol{n})=\sqrt{n_1^2+n_2^2+2n_3^2}$ and $k(\boldsymbol{n})=k_0(\boldsymbol{n})$ at different times.}
\label{fig: evolve2}
\end{figure}

Then we show the evolution of a strong anisotropy $\gamma (\boldsymbol{n})=1+\frac{1}{2} (n_1^4+n_2^4+n_3^4)$ from a $2\times 2\times 1$ cuboid, and the parameters are chosen the same as in previous weak anisotropy. As can be seen from figure \ref{fig: evolve3}, the large and flat facets may be broken into small facets, and the small facets may also merge into a large facet. Moreover, we note from figure \ref{fig: evolve3} that the triangulations become dense at the edges where the facets merge but become sparse at the other edges and at the interior of the facets where the weighted mean curvature $\mu$ is almost a constant.

\begin{figure}[hbtp!]
\centering
\includegraphics[width=1\textwidth]{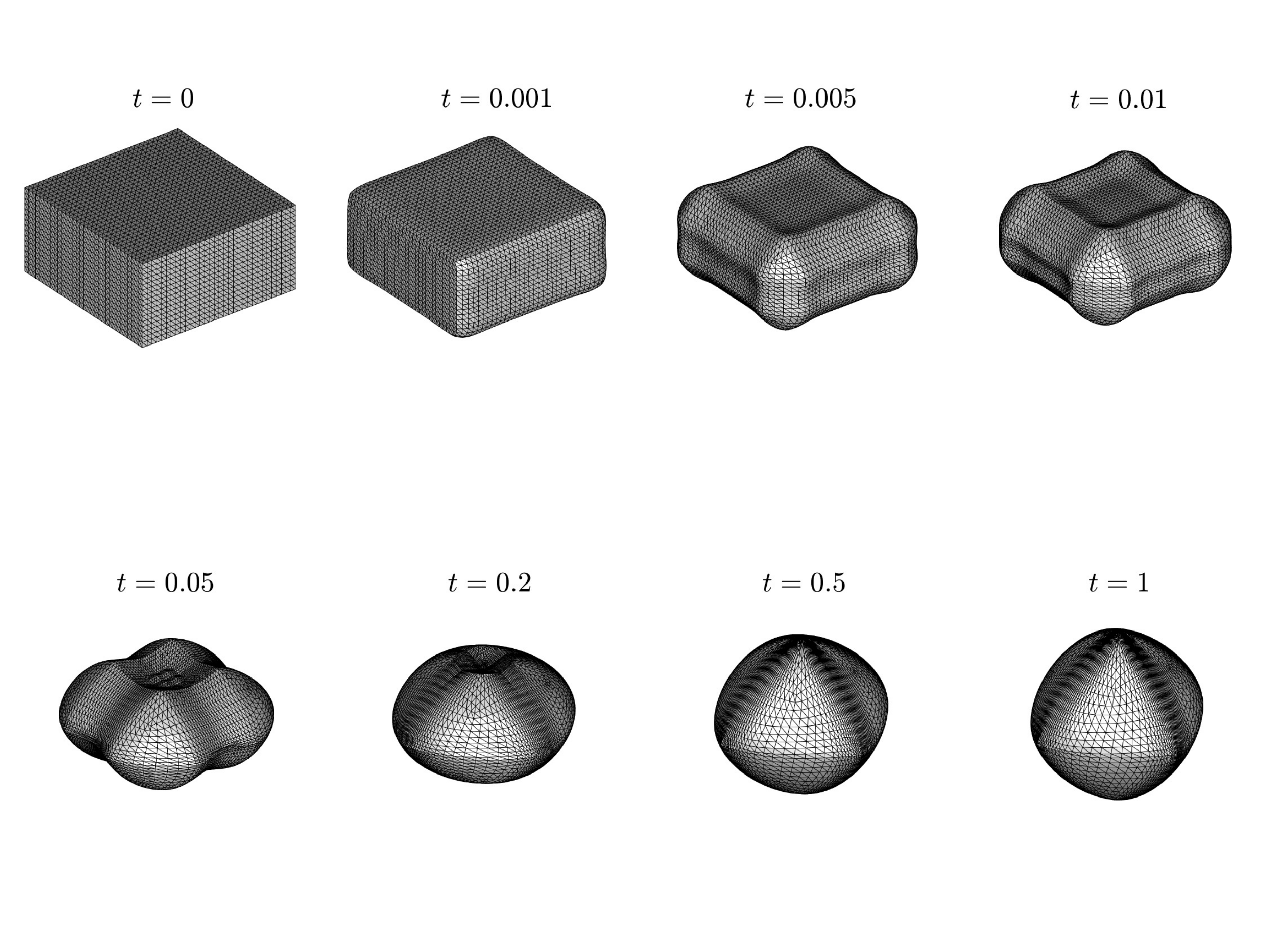}
\caption{Evolution of a $2\times 2\times 1$ cuboid by anisotropic surface diffusion with a strong anisotropy $\gamma(\boldsymbol{n})=1+\frac{1}{2}(n_1^4+n_2^4+n_3^4)$ and $k(\boldsymbol{n})=k_0(\boldsymbol{n})$ at different times.}
\label{fig: evolve3}
\end{figure}

\section{Conclusions}

By generalizing the symmetrized surface energy matrix $Z_k(\boldsymbol{n})$ into 3D, we proposed a new weak formulation for the weighted mean curvature $\mu$ and derived a symmetrized variational formulation. Based on this new variational formulation, we proposed a structural-preserving finite element method (SP-PFEM) for anisotropic surface diffusion in 3D and established its unconditional energy stability for $C^2$ anisotropies with $\gamma(-\boldsymbol{n})=\gamma(\boldsymbol{n})$ \eqref{engstabgmp}. Moreover, we constructed the upper bound for the minimal stabilizing function $k_0(\boldsymbol{n})$, which also gave a promising approach to determine the symmetrized surface energy matrix $\boldsymbol{Z}_k(\boldsymbol{n})$. Unlike other structural-preserving schemes for anisotropic surface diffusion, our SP-PFEM can work for an arbitrary initial shape and a much broader functional class, the symmetric $C^2$ functions.

Similar to other PFEMs, our SP-PFEM for 3D anisotropic surface diffusion illustrated a second-order convergence rate, which was also verified by various numerical experiments. We examined the volume is conserved in machine epsilon, and the energy is dissipative, regardless of the choice of the anisotropy $\gamma(\boldsymbol{n})$, the stabilizing function $k(\boldsymbol{n})$ and the time step size $\tau$, which matched the prediction of the main theorem \ref{thm: main} well. We presented the evolution of both smooth and non-smooth initial shapes with weakly/strongly anisotropic energies. Many interesting phenomena were shown, such as numerical equilibrium, facet breaking, and facet merging.

Finally, We point out that the symmetrized variational formulation \eqref{eqn:New 3Daniso} can be applied to other geometric flows with symmetric anisotropic surface energy, such as the anisotropic mean curvature flow \cite{barrett2008numerical}, the Stefan problem \cite{barrett2010stable}, and the anisotropic elastic flow \cite{barrett2012parametric}. Our future work will consider the SP-PFEM for asymmetric anisotropic energies for the 2D and 3D anisotropic surface diffusion.



\bigskip

\begin{center}
{\textbf{Appendix A.}} Remark for several common used anisotropic surface energies
\end{center}
\setcounter{equation}{0}
\renewcommand{\theequation}{A.\arabic{equation}}

\medskip
For the ellipsoidal anisotropic surface energy \cite{barrett2008numerical}
\begin{equation}
	\gamma(\boldsymbol{n})=\sqrt{\boldsymbol{n}^T\boldsymbol{G}\boldsymbol{n}},
\end{equation}where $\boldsymbol{G}$ is positive definte,  we have
\begin{align}\label{ellipsoidalaa1}
&\gamma(\boldsymbol{p})=\sqrt{\boldsymbol{p}^TG\boldsymbol{p}},
\qquad \forall\boldsymbol{p}\in \mathbb{R}^3_*:=\mathbb{R}^3\setminus \{\boldsymbol{0}\},\\
&\boldsymbol{\xi}=\boldsymbol{\xi}(\boldsymbol{n})=
 \gamma(\boldsymbol{n})^{-1}\,\boldsymbol{G}\,\boldsymbol{n}, \quad  \forall \boldsymbol{n}\in \mathbb{S}^1, \\
 &
 {\bf H}_\gamma(\boldsymbol{n})=\gamma(\boldsymbol{n})^{-3/2}(\gamma(\boldsymbol{n})^2\boldsymbol{G}-(\boldsymbol{G}\boldsymbol{n})(\boldsymbol{G}\boldsymbol{n})^T).
\end{align}
And we know ${\bf H}_\gamma(\boldsymbol{n})$ is semi-positive definite by Cauchy inequality, which indicates the ellipsoidal anisotropy is weakly anisotropic.

For the $l^r$-norm ($r\geq 2$) metric anisotropic surface energy \cite{bao2021symmetrized}
\begin{equation}
 	\gamma(\boldsymbol{n})=(|n_1|^r+|n_2|^r+|n_3|^r)^{1/r},
 \end{equation} we have
\begin{align}
&\gamma(\boldsymbol{p})=\left\|\boldsymbol{p}\right\|_{l^r}=
 \left(|p_1|^r+|p_2|^r+|p_3|^r\right)^{\frac{1}{r}}, \qquad \forall\boldsymbol{p}\in \mathbb{R}^3_*\,,\\
&\boldsymbol{\xi}=\boldsymbol{\xi}(\boldsymbol{n})=
 \gamma(\boldsymbol{n})^{1-r}
  \begin{pmatrix}|n_1|^{r-2}n_1\\ |n_2|^{r-2}n_2\\|n_3|^{r-2}n_3\end{pmatrix}, \qquad \forall\boldsymbol{n}\in \mathbb{S}^1,\\
  &{\bf H}_\gamma(\boldsymbol{n})=(r-1)\gamma(\boldsymbol{n})^{1-2r}\begin{pmatrix}|n_1|^{r-2}(|n_2|^r+|n_3|^r)&*&*\\ -|n_1n_2|^{r-2}n_1n_2&*&*\\-|n_1n_3|^{r-2}n_1n_3&*&*\end{pmatrix}.
\end{align}
Where the $*$ entries can be deduced from other entries. By checking leading principal minors, we know that ${\bf H}_{\gamma}(\boldsymbol{n})$ is semi-positive definite. Thus the $l^r$-norm anisotropy is weakly anisotropic.

For the $4$-fold anisotropic surface energy \cite{deckelnick2005computation}
\begin{equation}
	\gamma(\boldsymbol{n})=1+\beta(n_1^4+n_2^4+n_3^4),
\end{equation}we have
\begin{align}
&\gamma(\boldsymbol{p})=\left(p_1^2+p_2^2+p_3^2\right)^{\frac{1}{2}}+\beta (p_1^4+p_2^4+p_3^4)\left(p_1^2+p_2^2+p_3^2\right)^{-\frac{3}{2}}, \\
&\boldsymbol{\xi}=\boldsymbol{\xi}(\boldsymbol{n})=\boldsymbol{n}+\beta \left(4n_1^3-3n_1(n_1^4+n_2^4+n_3^4), *, *\right)^T,\\
&\lambda_1({\boldsymbol{n}})+\lambda_2(\boldsymbol{n})=2(1-3\beta)+36\beta(n_1^2n_2^2+n_2^2n_3^2+n_3^2n_1^2).
\end{align}
Thus $\gamma(\boldsymbol{n})$ is strongly anisotropic if $\beta>1/3$. For $\beta=1/3$, we know $\lambda_1(\boldsymbol{n})+\lambda_2(\boldsymbol{n})\geq 0$ and
\begin{equation}
	\lambda_1(\boldsymbol{n})\lambda_2(\boldsymbol{n})=4(5(n_1^4n_2^4+n_2^4n_3^4+n_3^4n_1^4)+18n_1^2n_2^2n_3^2))\geq 0,
\end{equation}
which means $\gamma(\boldsymbol{n})$ is weakly anisotropic. When $\beta=0$, $\gamma(\boldsymbol{n})$ collapse to the $l^2$-norm, and we have alreadly known such $\gamma(\boldsymbol{n})$ is weakly anisotropic. We know that $\gamma(\boldsymbol{n})$ is weakly anisotropic for $0\leq \beta\leq \frac{1}{3}$ and is strongly anisotropic for $\beta>\frac{1}{3}$.

Finally,  for the regularized BGN anisotropic surface energy \cite{barrett2008variational}
\begin{equation}
	 \gamma(\boldsymbol{n})=\left(\sum_{l=1}^L (\boldsymbol{n}^T\boldsymbol{G}_l\boldsymbol{n})^{r/2}\right)^{1/r},
\end{equation} where $\boldsymbol{G}_1, \boldsymbol{G}_2,\ldots, \boldsymbol{G}_L$ are positive definite matrices, we get
\begin{align}
&\gamma(\boldsymbol{p})=\left(\sum_{l=1}^L (\boldsymbol{p}^T\boldsymbol{G}_l\boldsymbol{p})^{r/2}\right)^{1/r},\qquad \forall\boldsymbol{p}\in \mathbb{R}^3_*,\\
&\boldsymbol{\xi}=\boldsymbol{\xi}({\boldsymbol{n}})=\gamma(\boldsymbol{n})^{1-r}\sum_{l=1}^L \gamma^{r-2}_l(\boldsymbol{n})\boldsymbol{G}_l\boldsymbol{n}\quad \forall\boldsymbol{n}\in \mathbb{S}^1,\\
&{\bf H}_\gamma(\boldsymbol{n})=\gamma(\boldsymbol{n})^{1-2r}(\boldsymbol{M}_1+(r-1)\boldsymbol{M}_2).
\end{align}
where $\gamma_l(\boldsymbol{n}):=\sqrt{\boldsymbol{n}^T\boldsymbol{G}_l\boldsymbol{n}}, \,l=1,2,\ldots,L$, and
\begin{equation}
	\boldsymbol{M}_1=\gamma(\boldsymbol{n})^r\sum_{l=1}^L\gamma_l(\boldsymbol{n})^{r-4}(\gamma_l(\boldsymbol{n})^2\boldsymbol{G}_l-(\boldsymbol{G}_l\boldsymbol{n})(\boldsymbol{G}_l\boldsymbol{n})^T),
\end{equation}
\begin{equation}
	\boldsymbol{M}_2=\gamma(\boldsymbol{n})^r\sum_{l=1}^L(\boldsymbol{G}_l\boldsymbol{n})(\boldsymbol{G}_l\boldsymbol{n})^T\gamma_l^{r-4}(\boldsymbol{n})-(\sum_{l=1}^L \gamma_l^{r-2}(\boldsymbol{n})\boldsymbol{G}_l\boldsymbol{n})(\sum_{l=1}^L \gamma_l^{r-2}(\boldsymbol{n})\boldsymbol{G}_l\boldsymbol{n})^T.
\end{equation}
By Cauchy inequality, we obtain that $\boldsymbol{M}_1, \boldsymbol{M}_2$ are semi-positive definite. Thus the BGN anisotropy is weakly anisotropic for $r\geq 1$.


\bibliographystyle{siamplain}
\bibliography{thebib}
\end{document}